\newtheorem*{theorem*}{Theorem}
\newtheorem{theorem}{Theorem}[section]
\newtheorem{lemma}[theorem]{Lemma}
\newtheorem{proposition}[theorem]{Proposition}
\newtheorem{conjecture}[theorem]{Conjecture}
\newtheorem{headthm}{Theorem}
\theoremstyle{definition}
\newtheorem{definition}[theorem]{Definition}
\newtheorem{notations}[theorem]{Notations}
\newtheorem{example}[theorem]{Example}
\newtheorem{settings}[theorem]{Settings}
\theoremstyle{remark}
\newtheorem{remark}[theorem]{Remark}
\newcommand{\str}{\textup{str}}
\newcommand{\ind}{\textup{ind}}
\newcommand{\pd}{\textup{pd}}
\newcommand{\het}{\textup{ht}}
\newcommand{\reg}{\textup{reg}}
\setlist[enumerate,1]{label=(\roman*)}
\begin{document}

\title{Explicit Stillman bounds for all degrees}

\author{Giulio Caviglia}
\address[G.~Caviglia]{Department of Mathematics, Purdue University, 150 N. University Street, West Lafayette, IN 47907-2067, USA.}
\email{\href{mailto:gcavigli@math.purdue.edu}{gcavigli@math.purdue.edu}}

\author{Yihui Liang}
\address[Y.~Liang]{Department of Mathematics, Purdue University, 150 N. University Street, West Lafayette, IN 47907-2067, USA.}
\email{\href{liangyihui1996@hotmail.com}{liangyihui1996@hotmail.com}}

\author{Cheng Meng}
\address[C.~Meng]{Yau Mathematical Sciences Center, Tsinghua University, Beijing 100084, China.}
\email{\href{mailto:cheng319000@mail.tsinghua.edu.cn}{cheng319000@mail.tsinghua.edu.cn}}

\thanks{Giulio Caviglia was partially supported by a grant from the Simons Foundation (41000748, G.C.)}
\maketitle

\begin{abstract}
In 2016 Ananyan and Hochster proved Stillman's conjecture by showing the existence of a uniform upper bound on the length of an $R_\eta$-sequence containing fixed $n$ forms of degree at most $d$ in polynomial rings over a field. This result yields many other uniform bounds including bounds on the projective dimension of the ideals generated by $n$ forms of degree at most $d$. Explicit values of these bounds for forms of degree $5$ and higher are not yet known.

This article constructs such explicit bounds, one of which is an upper bound for the projective dimension of all homogeneous ideals, in polynomial rings over a field, generated by $n$ forms of degree at most $d$. In the settings of the Eisenbud-Goto conjecture, we derive an explicit bound of the Castelnuovo-Mumford regularity of a nondegenerate prime ideal $P$ in a polynomial ring $S$ in terms of the multiplicity of $S/P$.
\end{abstract}

\section{Introduction}

Let $K$ be a field, $R=K[x_1,\dots,x_N]$ be a polynomial ring over $K$, and $I$ be an ideal of $R$ generated by $n$ forms of degrees $d_1,\dots,d_n$. We will denote the projective dimension of a module $M$ over $R$ by $\pd_R(M)$ or $\pd(M)$ when $R$ is clear from context. Stillman (see \cite{Stillman})  conjectured that $\pd_R(I)$ can be bounded in terms of $n$ and $d_1,\dots,d_n$ but independent of $N$. We will refer to such bounds as Stillman bounds. Ananyan and Hochster were the first to give an affirmative answer to Stillman's conjecture in \cite{AH1}, where they showed the existence of Stillman bounds by proving the existence of small subalgebras and small subalgebra bounds $\prescript{\eta}{}{B}$ (defined in \cite[Theorem B]{AH1}). Stillman's conjecture was later reproved in \cite{DLL} and \cite{ESS2}, both using topological Noetherianity results from \cite{Draisma}.

With the existence proven, the next question is to find explicit Stillman bounds. While many early and recent works \cite{AH2} \cite{BEDER20111105} \cite{Bruns_1976} \cite{burch_1968} \cite{10.2307/20535117} \cite{ENGHETA201060} \cite{ENGHETA2007715} \cite{Kohn_1972} \cite{MANTERO20191383} \cite{JaMc} have established Stillman bounds in degree at most $4$, the question for degree $5$ and higher remains untouched. In this paper, we will give effective answers to this question and other related questions asking bounds on certain data of ideals or modules.

In Ananyan and Hochster's paper \cite{AH1}, they gave an inductive process for the existence of Stillman bounds. Roughly speaking, consider $k$ sequences of functions $\{f_{ij}\}_{1 \leq i \leq k, j \geq 1}$, which is a table of functions of $k$ rows and infinitely many columns indexed by $j$, and functionals $\{\mathcal{F}_{ij}\}_{1 \leq i \leq k, j \geq 2}$ in suitably many functional variables such that it makes sense to talk about whether the following equality holds for any $i,j$:
$$f_{i+1,j}=\mathcal{F}_{ij}(f_{11},\ldots,f_{k,j-1},f_{1j},\ldots,f_{ij})$$
where we define $f_{k+1,j}=f_{1,j+1}$. When these equalities hold, we will say all the information on $\mathcal{F}_{ij}$ determines a recurrence relation. The main argument by Ananyan and Hochster says that \emph{there exist recurrence relations $\mathcal{F}_{ij}$ such that given initial conditions, that is, the concrete expression of $f_{ij}$ when $j$ is small, then there exist $k$ sequences $f_{ij}$, satisfying the recurrence relation, such that certain property $\mathcal{P}_i$ is satisfied by $f_{ij}$ for $1 \leq i \leq k$ and any $j$}. 

By choosing a suitable initial condition, one of the $\mathcal{P}_i$ turns out to be bounding the projective dimension of ideals generated in bounded degree by bounded number of elements. Thus, for this fixed $i$ and any $j$, the expression of $f_{ij}$, which is the image of finitely many $f_{kl}$'s under an iteration of $\mathcal{F}_{kl}$'s, gives us a bound on $\pd_R(I)$ only depending on the number of generators of $I$ and the maximal degree of generators. 

Two difficulties lie in this process. The first difficulty is about simplifying the iteration: instead of writing the final result by an inductive iteration of functions with many parentheses, is there an explicit function that only depends on few variables, like one variable or two variables? The second one is lack of concrete expression for a certain functional in this process. This functional takes an $f$ as an input and gives an output $\mathcal{F}(f)$ using iteration of functions. However, the number of iterations needed depends on the value of 
the function $f$ itself, making it impossible to express this functional in terms of arithmetic operations or exponential functions. 

In this paper, we will give an explicit Stillman bound. We overcome the first difficulty by applying Knuth arrows, and overcome the second one by introducing $F$-decompositions. Our results show that the Stillman bound on projective dimension can be given by two consecutive Knuth arrows, one of which has a quadratic degree. 

We follow the notation as in \cite{AH1}. In particular, for $\eta \in \mathbb{N}$, $R_\eta$ refers to Serre's condition, that is, a ring satisfies $R_\eta$ if it is regular in codimension $\eta$. The \Cref{Theorem A} below gives an explicit bound to a corollary of \cite[Theorem B]{AH1}.
\begin{headthm}(See Theorem \ref{5.9})\label{Theorem A}
Let $d \in \mathbb{Z}_{\geq 2}$, $\tilde{d}=d+1$, $\delta_1,\ldots,\delta_d \in \mathbb{N}$. Assume $\sigma=\max\{2,\delta_i:1\leq i \leq d\}$. Then for any graded vector space $V=\oplus_{i \geq 1}V_i$ with $\dim V_i=\delta_i$, if $I$ is the $R$-ideal generated by $V$, then $\pd(R/I) \leq \tilde{d}\uparrow^{d^2+d-2}\tilde{d}\uparrow\sigma$.     
\end{headthm}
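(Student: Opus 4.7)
The plan is to apply the explicit recurrence machinery built in the earlier sections and then collapse the resulting iterated expression into a compact Knuth-arrow form. Fix the graded vector space $V=\oplus_i V_i$ with $\dim V_i=\delta_i$; by the discussion in the introduction, the ideal $I$ generated by $V$ fits into the Ananyan--Hochster inductive setup, in which one of the properties $\mathcal{P}_i$ concerns $\pd(R/I)$ as a function of the generator data. The initial condition for this setup is determined by the list $(\delta_1,\dots,\delta_d)$, and the only scalar relevant to the final bound is $\sigma=\max\{2,\delta_i\}$.

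The first step is to invoke the $F$-decomposition framework developed earlier in the paper to turn each abstract functional $\mathcal{F}_{ij}$ appearing in the recurrence into a concrete elementary expression in its arguments. This is the key device that removes the obstruction to writing the iterated image of $\mathcal{F}_{ij}$'s in closed form, and it is what lets us extract a genuine number rather than merely an existence statement.

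The second step is to estimate the resulting iterated composition using Knuth-arrow inequalities with base $\tilde{d}=d+1$. Each application of an $\mathcal{F}_{ij}$ essentially substitutes the current bound into an elementary function of base $\tilde{d}$, which raises the arrow height by at most one. Running the iteration across the $k$-row, $j$-column table with $j\leq d$ and collecting contributions shows that the total arrow height is controlled by $d^2+d-2$: the $d-1$ columns that actually contribute, combined with the row-to-row propagation across the fixed number of rows used in the argument, yields this quadratic count once the rows and columns that do not substantively iterate an exponential are collapsed. The innermost layer still has to absorb the scalar initial condition, which is exactly what the trailing $\tilde{d}\uparrow\sigma$ records.

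The main obstacle is the arrow-height accounting in the second step: a naive bookkeeping would give an unnecessarily large exponent such as $kd$ or $d^3$, so one has to identify which compositions in the recurrence truly nest a new Knuth arrow and which are absorbed into lower-arrow-height terms by the estimates developed earlier. This tight accounting only becomes accessible once the $F$-decomposition has turned the functionals into honest elementary operations, so that the number of nested exponentials in the final expression coincides with the Knuth-arrow height in the statement.
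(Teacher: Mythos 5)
Your proposal describes the machinery of Sections 3--5 (the $F$-decomposition and the Knuth-arrow collapse of the recurrence), but it never actually produces the projective dimension bound, and that is a genuine gap. In the paper's formal setup, none of the five properties $PAa$, $PAb$, $PB$, $PcB$, $PD$ in the recurrence is itself the statement ``$\pd(R/I)\leq \cdots$''; the introduction's phrase about ``one of the $\mathcal{P}_i$'' is only heuristic. What the Knuth-arrow computation (Theorems 5.7--5.8) bounds is the function $B_d(3,\sigma\tilde{e}_d)$ satisfying property $PB$: namely, $V$ lies in a $K$-subalgebra $S$ generated by an $R_3$-sequence of forms of degree at most $d$ and length at most $B=\tilde{d}\uparrow^{d^2+d-2}\tilde{d}\uparrow\sigma$. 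To get from there to $\pd(R/I)\leq B$ you still need the descent step: an $R_\eta$-sequence of forms ($\eta\geq 1$) is a prime, hence regular, sequence, so $S$ is abstractly a polynomial ring in $B$ variables and $R$ is free (faithfully flat) over $S$; therefore $\pd_R(R/I)=\pd_S(S/VS)\leq B$. Without this flatness argument the arrow bound says nothing about projective dimension, so your proof is incomplete at precisely the point where the theorem is actually deduced.

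Two smaller issues. First, your accounting for the exponent $d^2+d-2$ (``$d-1$ columns combined with row-to-row propagation'') does not match the actual source of that number: in the paper it is the cumulative index of $B_d$ in the list $A_2,{}^2\psi_1,{}^2\phi_1,B_2;A_3,\ldots,B_3;\ldots$, where degree $j$ contributes $2j$ functions, giving $4+6+\cdots+2d=d^2+d-2$; each listed function costs one arrow level via the absorption lemma. Second, the reduction from the full dimension vector $\delta$ to the single scalar $\sigma$ is not automatic; it uses $\delta\leq\sigma\tilde{e}_d$ together with the ascending property of the packages (the passage from $\mathcal{P}_\gamma$ to $\mathcal{P}_\zeta$), which your sketch asserts but does not justify.
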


The statements (i)-(vi) of \Cref{Theorem B} below realize the explicit bounds in Theorem D (c), Theorem A (a), Theorem A (b), Theorem B, Theorem C, and Theorem F in \cite{AH1} under the same setting respectively.
\begin{headthm}\label{Theorem B}
We have the following bounds for the properties below:
\begin{enumerate}
\item (See Theorem \ref{6.1}) Assume $d \geq 2$ and $D_d(\sigma)=d\uparrow2\uparrow \tilde{d}\uparrow^{d^2+d-2}\tilde{d}\uparrow\sigma$. Then for any $R$-ideal $I$ generated by a regular sequence of length at most $\sigma$ and degree at most $d$, if $P$ is a minimal prime over $I$, then $P$ is generated by at most $D_d(\sigma)$ elements. 
\item (See Theorem \ref{6.2}) Assume $\eta \geq 2$. We set $A_2(\eta)=\eta+2$, and for $d \geq 3$, set
$$A_d(\eta)=d\uparrow^{d^2-d-1}3\uparrow\eta.$$
Then for any $F \in R_d$ whose strength is at least $A_d(\eta)$, the ring $R/FR$ satisfies Serre's condition $R_\eta$.
\item (See Theorem \ref{6.3}) We set $\bar{A}_2(\eta,\delta)=\eta+4|\delta|+1$, and for $d \geq 3$, set $\bar{A}_d(\eta,\delta)=|\delta|-1+d\uparrow^{d^2-d-1}3\uparrow(\eta+3|\delta|)$. Then for any graded vector space $V=\oplus_{i \geq 1}V_i$ with $\dim V_i=\delta_i$, when the strength of 
$V_j$ is at least $\bar{A}_d(\eta,\delta)$ for $1 \leq j \leq d$, then any homogeneous $K$-basis of $V$ is an $R_\eta$-sequence. 
\item (See Theorem \ref{6.4}) Assume $d \geq 2, \eta \geq 4$. We set $d_1=\max\{d+1,\eta-1\}$, $B_d(\eta,\delta)=d_1\uparrow^{d^2+d-1}d_1\uparrow|\delta|$, then for any graded vector space $V=\oplus_{i \geq 1}V_i$ with $\dim V_i=\delta_i$, $V$ lies in a $K$-algebra generated by an $R_\eta$-sequence of degree at most $d$ and length at most $B_d(\eta,\delta)$.
\item (See Theorem \ref{6.5}) Assume $d \geq 2$ and set $\tilde{d}=d+1$. Let $M$ be an $R$-module with minimal presentation
$$R^n \xrightarrow[]{\phi} R^m \to M \to 0$$
where entries of $\phi$ are elements in $R$, which are not necessarily homogeneous and have degree at most $d$. Then $\pd_R(M) \leq \tilde{d}\uparrow^{d^2+d-2}\tilde{d}\uparrow(mn)$.
\item (See Theorem \ref{6.6}) For $d \geq 2$, there exists a function $\Phi_d(h)$ such that for $F \in R_d$ with strength at least $\Phi_d(h)$, the ideal generated by $\{\frac{\partial F}{\partial x_i}\}_{1 \leq i \leq N}$ is not contained in an ideal generated by $h$ forms of degree at most $d-1$. We can take $\Phi_d(h)=h+1$ for $d=2$ and $\Phi_d(h)=d\uparrow^{d^2-d-2}d\uparrow\max\{2,h\}+1$ for $d \geq 3$. 
\end{enumerate}
\end{headthm}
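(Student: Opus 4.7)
For $d = 2$ the bound $\Phi_2(h) = h+2$ is immediate from linear algebra. A quadratic form $F$ of strength at least $h+2$ cannot be written as a sum of $h+1$ products of linear forms; in characteristic $\neq 2$ this means its associated symmetric bilinear form has rank at least $2(h+2)$, so the $K$-span of $\{\partial F/\partial x_i\}$ has dimension at least $h+1$, which cannot fit inside an ideal generated by $h$ linear forms. Characteristic $2$ is handled analogously via the polarisation $F(x+y)-F(x)-F(y)$, with constants adjusted to preserve the linear shape of the bound.

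For $d \geq 3$ I would proceed by contradiction: suppose $\partial F/\partial x_i \in J := (G_1, \ldots, G_h)$ with each $\deg G_j \leq d-1$. In characteristic not dividing $d$, Euler's identity $d F = \sum x_i \partial F/\partial x_i$ forces $F \in J$, exhibiting $F$ as a sum of $h$ products of positive-degree forms, so $\str(F) \leq h$ immediately; the bulk of the work is thus in characteristic $p \mid d$, where one replaces Euler by a Hasse-derivative identity or by a characteristic-zero lift of $F$ along a flat deformation, at the cost of constant-order parameter overhead.

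The iterated-Knuth-arrow shape of the bound arises from invoking \Cref{Theorem B}(2): if $\str(F) \geq A_d(\eta)$ then $R/FR$ satisfies $R_\eta$. Taking $\eta = \max\{2,h\}$, a Jacobian-criterion argument combined with the containment $\{\partial_i F\} \subseteq J$ forces the singular locus of $R/FR$ to have codimension at most $h$, contradicting $R_\eta$. A direct invocation yields the threshold $A_d(\max\{2,h\}) = d\uparrow^{d^2-d-1}3\uparrow\max\{2,h\}$; the claimed $\Phi_d(h) = d\uparrow^{d^2-d-2}d\uparrow\max\{2,h\}$, which has one fewer arrow, should come from unfolding the outermost recursion in the proof of item (2) and merging it with the Jacobian bookkeeping, so that one level of iteration is absorbed by the Jacobian step rather than paid for separately; the replacement of the base $3$ by $d$ in the innermost arrow is the mild price paid for this saving.

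The main obstacle I anticipate is precisely this arrow-saving refinement. Matching the Knuth count exactly requires peeling back one layer of the recursive construction underlying item (2) and re-threading it through the Jacobian setup, rather than quoting item (2) as a black box. The characteristic-$p$ case adds a parallel layer of bookkeeping—most notably to verify that Hasse-derivative substitutes behave compatibly with the $R_\eta$ machinery—but introduces no essentially new ideas beyond those of items (1)--(5). Once both refinements are in place, the resulting threshold should be exactly $\Phi_d(h)$ as stated.
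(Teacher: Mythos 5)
Your proposal only engages with item (6) of the six claims, and even there it treats item (2) as an available black box rather than something to be proved; items (1)--(5) are simply not addressed. Focusing on item (6): the Euler-identity branch is fine but only covers characteristic not dividing $d$, and the replacement you sketch for $p\mid d$ (``a Hasse-derivative identity or a characteristic-zero lift'') is not an argument --- first-order Hasse derivatives coincide with the ordinary partials, and a flat lift to characteristic zero preserves neither the strength of $F$ nor the containment $\mathcal{D}F\subseteq(G_1,\dots,G_h)$, so nothing is gained. Your Jacobian route does work uniformly in all characteristics (over the algebraically closed, hence perfect, base field the singular locus of $R/FR$ is $V(F,\mathcal{D}F)$, which contains $V(F)\cap V(J)$ of codimension at most $h$ in $V(F)$, so $R/FR$ fails $R_{h}$), but what it proves is $\Phi_d(h)\leq A_d(\max\{2,h\})=d\uparrow^{d^2-d-1}3\uparrow\max\{2,h\}$, one arrow worse than claimed. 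The ``arrow-saving refinement'' you defer to is exactly the missing content, and the mechanism you propose for it (absorbing one level of the recursion behind item (2) into the Jacobian bookkeeping) cannot deliver it: the extra arrow in $A_d$ comes from the composition $A_d(\eta)=\mathcal{B}_{d-1}(3,D_{d-1}(\eta+1))+1$ with the doubly exponential $D_{d-1}$, and no amount of Jacobian bookkeeping removes that inner $D_{d-1}$.

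The paper takes a genuinely different and shorter route that avoids item (2) entirely: it invokes Proposition 2.6 of Ananyan--Hochster, which states that if $\mathcal{B}_{d-1}$ satisfies $PcB_{d-1}$ then $\Phi_d(h)=\mathcal{B}_{d-1}(3,h)+1$ already has the desired property (valid in all characteristics, since the argument there runs through small subalgebras rather than the Euler relation). Plugging in $\mathcal{B}_{d-1}(3,h)=B_{d-1}(3,h\tilde{e}_{d-1})\leq \widetilde{d-1}\uparrow^{(d-1)^2+(d-1)-2}\widetilde{d-1}\uparrow\max\{2,h\}$ from Section 5 gives precisely $d\uparrow^{d^2-d-2}d\uparrow\max\{2,h\}$; the arrow count $d^2-d-2$ is that of $B_{d-1}$, not of $A_d$ minus one. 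To repair your proof you would need to either import that proposition (or reprove its content) or accept the weaker bound $A_d(\max\{2,h\})$, and in addition supply proofs of items (1)--(5), which in the paper are obtained by chasing the explicit functionals $\mathcal{F}_{1,d},\dots,\mathcal{F}_{5,d}$ through the Knuth-arrow estimates of Section 5.
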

The bounds above are also related to the famous Eisenbud-Goto conjecture as follows:
\begin{conjecture}[\cite{EISENBUD198489}]\label{Conjecture 1.1}
Let $K$ be an algebraically closed field, $P \subset (x_1,\ldots,x_n)^2$ be a homogeneous ideal in $R=K[x_1,\ldots,x_n]$, then
$$\reg(P) \leq \deg(R/P)-\het(P)+1.$$
\end{conjecture}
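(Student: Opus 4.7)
The plan is to attempt the classical inductive approach to the Eisenbud--Goto conjecture, reducing from $\dim(R/P)$ to $\dim(R/P)-1$ by generic hyperplane section. The hypothesis $P \subset (x_1,\ldots,x_n)^2$ is precisely the non-degeneracy of $\mathrm{Proj}(R/P) \subset \mathbb{P}^{n-1}$ and forces $\dim(R/P) \geq 1$. For the base case $\dim(R/P) = 1$ one invokes the Gruson--Lazarsfeld--Peskine theorem, which gives exactly $\reg(P) \leq \deg(R/P)-\het(P)+1$ for any reduced irreducible non-degenerate projective curve, handling the prime case directly.

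For the inductive step with $\dim(R/P) \geq 2$, I would choose a general linear form $L \in R_1$, which is a nonzerodivisor on $R/P$ since $P$ is prime of positive dimension, and set $\bar{R} = R/(L)$, $\bar{P} = (P + (L))/(L)$. For generic $L$ the standard facts $\reg(\bar{P}) = \reg(P)$, $\deg(\bar{R}/\bar{P}) = \deg(R/P)$, and $\het_{\bar{R}}(\bar{P}) = \het_R(P)$ hold. If $\bar{P}$ were still prime and degenerate in $\bar{R}$, induction would give
$$\reg(P) \;=\; \reg(\bar{P}) \;\leq\; \deg(\bar{R}/\bar{P})-\het_{\bar{R}}(\bar{P})+1 \;=\; \deg(R/P)-\het(P)+1,$$
closing the argument.

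The main obstacle is that $\bar{P}$ is almost never prime: even for very nice primes $P$, a single generic hyperplane section typically destroys primeness and often reducedness, so the inductive hypothesis cannot be reapplied. A genuine proof therefore requires formulating and proving the same linear bound for a class of ideals stable under generic hyperplane section (for example, equidimensional ideals with controlled embedded components) and then specializing back to primes. The Stillman-type tools developed in this paper, in particular the projective-dimension bound of \Cref{Theorem A}, can be combined with standard inequalities relating $\reg$, $\pd$, and multiplicity to produce an effective bound on $\reg(P)$ as a function of $\deg(R/P)$, but only an iterated-exponential one rather than a linear one; bridging the gap to the linear Eisenbud--Goto bound would require genuinely new input. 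In fact the statement as literally worded is now known to fail via the Rees-like algebra construction of McCullough--Peeva, so an honest plan in the stated generality must either restrict to a subclass of primes (smooth varieties, Cohen--Macaulay rings with extra hypotheses, or toric varieties of a prescribed combinatorial type) where the hyperplane-section obstruction is controlled, or accept a weaker, non-linear target, which is precisely the route the authors take in the subsequent sections.
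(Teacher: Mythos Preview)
The statement you were asked to prove is labeled \emph{Conjecture} in the paper, and the paper does not prove it; on the contrary, the paragraph immediately following \Cref{Conjecture 1.1} records that McCullough and Peeva disproved it in 2018, and that no polynomial bound in $e(R/P)$ can hold. There is therefore no ``paper's own proof'' to compare against. The paper's contribution on this topic is not a proof of the conjecture but the explicit (highly super-exponential) regularity bound of Theorem~C, obtained by the small-subalgebra machinery.

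Your proposal is not a proof and you know it: you correctly identify the classical hyperplane-section strategy, correctly isolate the fatal obstruction (a generic hyperplane section of a prime is almost never prime, so the induction does not close), and correctly conclude that the statement as written is false in general. That is an accurate assessment, but it means the ``proof proposal'' has a genuine and unfixable gap by design---the target inequality is simply not true for all nondegenerate homogeneous primes. Any honest completion must either restrict the class of primes (as you note: curves via Gruson--Lazarsfeld--Peskine, smooth varieties, Cohen--Macaulay, etc.) or replace the linear bound by something weaker, which is exactly what the paper does in Section~7.
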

The conjecture is proved for curves by Gruson-Lazarsfeld-Peskine, for smooth surfaces by Lazarsfeld and Pinkham, for most smooth 3-folds by Ran and Kwak, if $R/P$ is Cohen-Macaulay by Eisenbud-Goto, and in many other special cases.  In 2018, McCullough and Peeva found counterexamples to the Eisenbud-Goto Conjecture in \cite{MR3758150EGcounterexample}, and they further showed that $\reg(R/P)$ cannot be bounded by any polynomial in terms of $e(R/P)$. Actually by \cite[Theorem 5]{mccullough2024prime}, any such bound must be at least double exponential in terms of $e(R/P)$. Later, Caviglia et al. made use of the technique in \cite{AH1} and prove the existence of a bound of $\reg(R/P)$ that only depends on the multiplicity $e(R/P)$ in \cite{MR3936076EGimplicitbound}. However, a bound as an explicit function in terms of $e(R/P)$ is not known yet. With the same notations and assumptions as \Cref{Conjecture 1.1}, we derive an explicit bound in terms of Knuth arrows.
\begin{headthm}(See Theorem \ref{7.5})
Let $P$ be a prime in $R$ of height $h \geq 2$ with $\deg(R/P)=e \geq 2$. Then
\begin{align*}
\reg(R/P) \leq (eh)\uparrow^{e^2h^2-1}(h+3).
\end{align*}
If moreover $K$ is algebraically closed, then
\begin{align*}
\reg(R/P) \leq (e^2)\uparrow^{e^4-1}(e+2).
\end{align*}
\end{headthm}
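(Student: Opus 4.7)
The plan is to apply the Ananyan-Hochster small-subalgebra machinery, in its explicit quantitative form from \Cref{Theorem B}\,(4), to descend the problem from $R$ to a polynomial ring of dimension bounded only in terms of $e$ and $h$, where classical regularity bounds apply.

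First, I would produce a generating set for $P$ whose number and degrees are controlled by $e$ and $h$ alone. A standard Noether normalization shows that $R/P$ is a finite module of rank $e$ over a polynomial subring $A = K[y_1,\dots,y_{N-h}]$, where the $y_i$ are sufficiently general linear combinations of the $x_j$. Reading off the multiplication table of this $A$-module structure, together with the relations expressing each $x_j$ on a chosen $A$-basis of $R/P$, yields a generating set for $P$ consisting of at most $\sigma_0 = \sigma_0(e,h)$ forms of degree at most $d_0 = d_0(e,h)$, both explicit polynomials in $e$ and $h$.

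Second, I would apply \Cref{Theorem B}\,(4) to embed this generating set inside a polynomial subring $T = K[f_1,\dots,f_m] \subset R$, where the $f_i$ form an $R_\eta$-sequence of degree at most $d_0$ and length $m \leq B_{d_0}(\eta,\delta)$, for appropriate $\eta$ and $\delta$ derived from $\sigma_0$ and $d_0$. Because an $R_\eta$-sequence is algebraically independent, $T$ is itself a polynomial ring in $m$ variables, and the contracted ideal $P \cap T$ is generated in degree at most $d_0$ by at most $\sigma_0$ forms. Inside $T$ one has the classical Bayer-Mumford bound $\reg_T(T/(P \cap T)) \leq (2d_0)^{2^{m-1}}$, and since $R$ is flat over $T$ when the $f_i$ have sufficient strength (as guaranteed by the $R_\eta$ condition for large enough $\eta$), one deduces $\reg_R(R/P) \leq \reg_T(T/(P \cap T))$ up to a controlled correction. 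Substituting the Knuth-arrow expression for $m$ into the Bayer-Mumford estimate, and simplifying via the Knuth-arrow arithmetic already developed to prove \Cref{Theorem A} and \Cref{Theorem B}, should then yield exactly the tower $(eh)\uparrow^{e^2h^2-1}(h+3)$.

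The principal obstacle lies in the fine bookkeeping of Step 2--3: one must tune $\eta$ and $\delta$ precisely enough that the resulting Knuth-arrow tower stops at height $e^2h^2 - 1$ rather than overshooting, and one must rigorously justify the regularity descent $\reg_R(R/P) \leq \reg_T(T/(P \cap T))$ by comparing the minimal free resolution of $R/P$ over $R$ with that of $T/(P \cap T)$ over $T$. Step 1 is essentially classical, but the bounds $\sigma_0$ and $d_0$ must be kept polynomial in $e$ and $h$ so as not to inflate the final Knuth-arrow tower.
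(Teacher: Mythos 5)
Your Step 1 contains the fatal gap. You claim that Noether normalization plus the multiplication table of $R/P$ over $A=K[y_1,\dots,y_{N-h}]$ yields a generating set of $P$ consisting of at most $\sigma_0(e,h)$ forms of degree at most $d_0(e,h)$ with both bounds \emph{polynomial} in $e$ and $h$. This is not true, and it is precisely the heart of the difficulty. The degrees of the $A$-module generators of $R/P$ (and of the entries of the multiplication table) are controlled only by $\reg_A(R/P)=\reg_R(R/P)$, which is the quantity you are trying to bound, so the argument is circular; and an unconditional polynomial bound on the generator degrees of a prime in terms of $e$ and $h$ is ruled out by the McCullough--Peeva counterexamples cited in the introduction, which show that no polynomial bound in $e$ exists even for $\reg(R/P)$, with the blow-up already visible at the level of the minimal generators and first syzygies. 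Once Step 1 fails, Steps 2--3 have nothing to feed on: you cannot tune $\eta$ and $\delta$ because you have no a priori finite set of low-degree forms to place in a small subalgebra, and the target tower $(eh)\uparrow^{e^2h^2-1}(h+3)$ cannot emerge without specific input degrees.

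The paper's route avoids generating $P$ altogether. A generic initial ideal argument (Lemma 7.2) shows $x_j^e\in\mathrm{gin}(P)$ for $1\le j\le h$, so $P$ contains a regular sequence $f_1,\dots,f_h$ of forms of degree exactly $e$; Chardin's lemma then gives $P=(f_1,\dots,f_h):(f)$ with $\deg f\le (e-1)(h-1)$. Only these $h+1$ forms of degree at most $d=(e-1)(h-1)$ are placed in a small subalgebra $S=K[y_1,\dots,y_B]$ via Theorem 5.8 (with $\sigma=h$), giving $B=\tilde d\uparrow^{d^2+d-2}\tilde d\uparrow h$. After re-grading $S$ to a standard graded $T$, the short exact sequence $0\to T/QT\to T/(f_1,\dots,f_h)T\to T/(f_1,\dots,f_h,f)T\to 0$ reduces the problem to the regularity of a complete intersection and of an ideal generated by $h+1$ known-degree forms in $B$ variables, where the Caviglia--Sbarra bound $(2d)^{2^{B-2}}$ applies; faithful flatness of $T\to R$ gives $\reg(R/P)=\reg(T/QT)$ exactly, not merely up to a correction. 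The specific tower height $e^2h^2-1$ then falls out of $d=(e-1)(h-1)$ and the Knuth-arrow absorption rules. To repair your proof you would need to replace your Step 1 by this colon-ideal representation (or something equally effective at producing finitely many low-degree forms that determine $P$); the rest of your outline is then essentially the paper's argument.
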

The organization of the paper is as follows. Section 1 is the introduction to the whole paper. Section 2 introduces the basic notations on which Ananyan and Hochster's argument runs. Section 3 introduces packages of functions and one recurrence relation which gives us a package satisfying certain conditions, and one function in this package bounds $\pd_R(I)$. In Section 4, we give upper bounding packages for the package introduced in Section 3. We also introduce the $F$-decomposition which bounds the implicit functional in the package. In Section 5, we recall the notion of Knuth arrow and derive a bound on the projective dimension. In Section 6, we derive other types of bounds using the bound found in Section 5. Section 7 gives an explicit Eisenbud-Goto bound by using the bound in Section 5.

\section{Notation}
In this section, we recall some notations which are needed for stating the theorems and some lemmas used in these theorems.
\begin{notations}
We make the following assumptions and give the definitions.
\begin{enumerate}
\item Let $\mathbb{N}$ be the set of nonnegative integers, $\mathbb{Z}_+$ be the set of positive integers.
\item Throughout the paper, we assume $K$ is an algebraically closed field unless otherwise stated, $R=K[x_1,\ldots,x_N]$ is a polynomial ring over $K$ where $N$ is sufficiently large. $R$ is naturally graded by setting the degree of $x_i,1 \leq i \leq N$ to be 1 and the degree of elements of $K$ to be $0$. Let 
$R_d$ be elements of degree $d$ in $R$, $R^h$ be homogeneous elements of positive degree in $R$.
\item The space of dimension vectors is $E=\oplus_{i \geq 1}\mathbb{N}e_i$. It is a monoid in a free abelian group with basis indexed by $\mathbb{N}$. For each $d \geq 2$, set $E_d=\oplus_{1 \leq i \leq d}\mathbb{N}e_i \subset E$. If there is no confusion, we identify $\mathbb{N}^d \cong E_d$ via $(\delta_1,\ldots,\delta_n) \to \delta_1e_1+\ldots+\delta_ne_n$. For any element $\delta \in E$, let $\delta_i$ be its $i$-th coordinate. We have $\delta_i=0$ for $i \gg 0$. Denote $\deg\delta=\max\{i:\delta_i \neq 0\}$. We have $E_d=\{\delta:\deg\delta \leq d\}$, $E_d \subset E_{d+1}$, and $E=\cup_{d \geq 1}E_d$. For $\delta \subset E$, denote $|\delta|=\sum_{i \geq 1}\delta_i$. We denote $\tilde{e}_i=e_i+e_{i-1}+\ldots+e_1=(1,1,\ldots,1) \in E_i$. For two dimension vectors $\delta,\delta' \in E$, write $\delta \geq \delta'$ if $\delta_i \geq \delta'_i$ for all $i$. This gives a partial order on $E$. For this order, $\leq,>,<$ make sense similarly.
\item The symbol $V$ itself always denotes a finite dimensional $K$-vector space generated by elements in $R^h$; thus $V$ is also graded. Write $V=\oplus_{i \geq 1}V_i$ where $V_i \subset R_i$; we say the element $\sum_{i \geq 1}(\dim_K V_i)e_i \in E$ is the dimension vector of $V$, denoted by $\underline{\dim}V$. We define $\deg V=\deg \underline{\dim}V=\max\{i:V_i \neq 0\}$.  In particular, we have $\dim V=|\underline{\dim}V|$.
\item A function $f$ from a subset of a product of $\mathbb{N}$ and $\mathbb{Z}_+$ to $ \mathbb{N}$ is ascending, if we increase one coordinate and fix all other coordinates, then the value of $f$ increases when it exists. In one variable this just means $f$ is increasing. A function $f=(f_1,\ldots,f_{d})$ from a subset of a product of $\mathbb{N}$ and $\mathbb{Z}_+$ to $\mathbb{N}^{d}$ is ascending, if all its coordinates are ascending.
\item For a subset $\sigma \subset R$, $V(\sigma) \subset \mathbb{A}^N_K$ is the set of points where all elements of $\sigma$ vanish. For $\eta \in \mathbb{Z}_+$, the condition $R_\eta$ refers to Serre's condition, that is, a Noetherian ring $S$ satisfies $R_\eta$ if $S_p$ is regular for any $p \in \textup{Spec}(S)$ with $\textup{ht}(p) \leq \eta$. If the singular locus of $S$ is closed and defined by an $S$-ideal $J$, this is equivalent to the condition that $\textup{ht}(J) \geq \eta+1$.
\item A sequence of elements generating a proper ideal of a Noetherian ring $S$ is a prime sequence (respectively, an $R_\eta$-sequence), if the quotient of $S$ by any initial segment is a domain (respectively, satisfies $R_\eta$). 
\item For $F \in R^h$, denote $\mathcal{D}F$ the $K$-vector space generated by its partial derivatives $\{\partial F/\partial x_i,1 \leq i \leq N\}$. In particular, if $F \in R_d$, then $\mathcal{D}F \subset R_{d-1}$.
\end{enumerate}
\end{notations}
\begin{remark}
If $S=R$ is a polynomial ring, $\eta \geq 1$, then an $R_\eta$-sequence lying in $R^h$ is a prime sequence.     
\end{remark}
\begin{definition}[Strength of elements in $R^h$]
\begin{enumerate}
\item Let $F \in R^h$, $k \in \mathbb{Z}_+$. We say $F$ has a $k$-collapse if $F=\sum_{1 \leq i \leq k}f_ig_i$ where $f_i,g_i \in R^h$. In other words, $F$ lies in an ideal generated by $k$ elements in lower degree. Denote the strength of $F$, $\str(F)=\max\{k:F \text{ does not have a } k\text{-collapse} \}$. If $F \in R_1$, denote $\str(F)=+\infty$.
\item Let $V$ be a graded vector space. Denote $\str(V)=\min\{\str(F):F \in V \cap R^h\}$. That is, $\str(V)$ is the minimal strength of nonzero homogeneous elements in $V$.
\item We say $F$ is $k$-strong if $\str(F)\geq k$. We say $V$ is $k$-strong if $\str(V)\geq k$. 
\end{enumerate}
Note that $F$ is irreducible if and only if $\str(F) \geq 1$.
\end{definition}
Next, we consider the following conditions on certain functions, which originate from \cite{AH1}.
\begin{definition}
\begin{enumerate}
\item Property $PAa$ refers to properties on a sequence of functions $A_d:\mathbb{Z}_+ \to \mathbb{Z}_+, d \in \mathbb{Z}_+$. We say the sequence $A_d$ satisfies Property $PAa$ at degree $d$, if $A_d$ is increasing, and we have: for any $\eta \in \mathbb{Z}_+$, $0 \neq F \in R^h$ with $\deg(F) \leq d$, $\str(F) \geq A_d(\eta)$ implies the codimension of the singular locus of $R/FR$ in $R/FR$ is at least $\eta+1$, that is, $R/FR$ is $R_\eta$.
\item Property $PAb$ refers to properties on a sequence of functions $\bar{A}_d:\mathbb{Z}_+ \times \mathbb{N}^d \to \mathbb{Z}_+, d \in \mathbb{Z}_+$. We say the sequence $\bar{A}_d$ satisfies Property $PAb$ at degree $d$, if $\bar{A}_d$ is ascending, and we have: for any $(\eta,\delta) \in \mathbb{Z}_+ \times \mathbb{N}^d$, $\underline{\dim}V=\delta$, if $\str(V_j) \geq \bar{A}_d(\eta,\delta)$ for all $1 \leq j \leq d$, then every sequence of $K$-linearly independent forms in $V$ is an $R_\eta$-sequence.
\item Property $PB$ refers to properties on a sequence of functions $B_d:\mathbb{Z}_+ \times \mathbb{N}^d \to \mathbb{Z}_+, d \in \mathbb{Z}_+$. We say the sequence $B_d$ satisfies Property $PB$ at degree $d$, if $B_d$ is ascending, and we have: for any $(\eta,\delta) \in \mathbb{Z}_+ \times \mathbb{N}^d$, $\underline{\dim}V=\delta$, then $V$, and hence the $K$-subalgebra generated by $V$ in $R$, is contained in a $K$-subalgebra generated by an $R_\eta$-sequence of forms of degree at most $d$ and length at most $B_d(\eta,\delta)$.
\item Property $PcB$ refers to properties on a sequence of functions $\mathcal{B}_d:\mathbb{Z}_+ \times \mathbb{Z}_+ \to \mathbb{Z}_+, d \in \mathbb{Z}_+$. We say the sequence $\mathcal{B}_d$ satisfies Property $PcB$ at degree $d$, if $\mathcal{B}_d$ is ascending, and we have: for any $(\eta,n) \in \mathbb{Z}_+ \times \mathbb{Z}_+$, if $\dim V=n$ and $\deg V \leq d$, then $V$, and hence the $K$-subalgebra generated by $V$ in $R$, is contained in a $K$-subalgebra generated by an $R_\eta$-sequence of forms of degree at most $d$ and length at most $\mathcal{B}_d(\eta,n)$.
\item Property $PD$ refers to properties on a sequence of functions $D_d:\mathbb{Z}_+ \to \mathbb{Z}_+, d \in \mathbb{Z}_+$. We say the sequence $D_d$ satisfies Property $PD$ at degree $d$, if $D_d$ is increasing, and if we assume $I$ is generated by a regular sequence of degrees at most $d$ of length at most $k$, then every minimal prime over $I$ is generated by at most $D_d(k)$ elements.
\end{enumerate}    
\end{definition}
In the definitions (ii) and (iii) above we use the identification $E_d=\mathbb{N}^d$. We abbreviate ``property $*$ at degree $d$" by $*_d$, so we get $PAa_d,PAb_d,PB_d,PcB_d,PD_d$ for $d \in \mathbb{Z}_+$. We will abbreviate ``$A_d$ satisfies Property $PAa_d$" by $A_d \in PAa_d$, and $\in$ means satisfying certain property instead of elements belonging to a set here. Note that by definition, if $d'\leq d$ and $A_d$ satisfies $PAa_d$, then it also satisfies $PAa_{d'}$; similarly for $PcB_d$ and $PD_d$. If $d'\leq d$ and $\bar{A}_d$ satisfies $PAb_d$, then when we restrict from $E_d$ to $E_{d'}$, the restriction of $\bar{A}_d$ satisfies $PAb_{d'}$, and similar for $PB_d$.
\begin{remark}
In the above definitions, the notations are slightly different from \cite{AH1} where we change the lower indices and the variables. For example, the function $\mathcal{B}_d(\eta,n)$ in this paper corresponds to $\prescript{\eta}{}{\mathcal{B}}(n,d)$ in \cite{AH1}. The action of pulling down the symbol $d$ to a lower index is essential because it marks our reduction steps; in each step we work from $d$ to $d+1$ for all the other variables.    
\end{remark}

Next we will give a description of an inductive process on sequences of functions. Such descriptions can be made clear if we introduce the following notations on packages of functions, recurrence relations and initial conditions.
\begin{definition}
\begin{enumerate}
\item We will say two functions are of the same type if their domain and range are the same. We will say two functionals have the same form if their input functions and output functions are of the same type.
\item Let $k \in \mathbb{Z}_+$. We say a 
\textbf{ $k$-package $\mathcal{P}$ of functions} is a set of functions $f_{i,j}$ indexed by $(i,j) \in \{1,\ldots,k\}\times \mathbb{Z}_+$. For such a package, we give an order $\leq$ on its index set $\{1,\ldots,k\}\times \mathbb{Z}_+$: $(i_1,j_1) \leq (i_2,j_2)$ if $j_1<j_2$ or $j_1=j_2,i_1<i_2$.
\item A recurrence relation $\mathcal{R}$ for a package is a set of functionals $\mathcal{F}_{i,j}$, $(i,j) \in \{1,\ldots,k\}\times \mathbb{Z}_+$, $(i,j) \neq (1,1)$ such that the input of $\mathcal{F}_{i,j}$ is a subset of $\{f_{i',j'}, (i',j')<(i,j)\}$ and the output has same type as $f_{i,j}$. We write the recurrence relation in the form $f_{i,j}=\mathcal{F}_{i,j}((f_{i',j'})_{(i',j')<(i,j)})$, and say two recurrence relations have the same form if the indices appearing in the variables and values of the functionals are the same. We say a package of functions $\mathcal{P}$ satisfies the recurrence relation $\mathcal{R}$ if this equality holds for any $(i,j)>(1,1)$.
\item An initial condition 
$\mathcal{I}$ of a package is the value of a finite subset of the package, which is a set of functions. We say two initial conditions $\mathcal{I}$, $\mathcal{I}'$ have the same form if the index sets of the functions are the same and the corresponding functions are of the same type.
\end{enumerate}
\end{definition}
A recurrence relation with an initial condition which is compatible and large enough determines a package. For example, a $2$-package will be the set of functions
$$f_{11}, f_{21}, f_{12}, f_{22}, f_{13}, f_{23},\ldots$$
and one recurrence relation for this package consists of functionals, which satisfies
$$f_{21}=\mathcal{F}_{21}(f_{11}),f_{12}=\mathcal{F}_{12}(f_{11},f_{21}),f_{22}=\mathcal{F}_{22}(f_{11},f_{21},f_{12}),\ldots.$$
This relation, along with the initial condition $f_{11}$, determines a package. We can draw a diagram on how the recurrence relation runs, where we use the symbol $\xRightarrow{}$ for variables and values of functionals:
\begin{center}
\begin{tikzcd}
f_{11} \arrow[r,Rightarrow]\arrow[d,Rightarrow]\arrow[rd,Rightarrow] & f_{21}\arrow[d,Rightarrow]\arrow[ld,Rightarrow]\\
f_{12} \arrow[r,Rightarrow] & f_{22}
\end{tikzcd}    
\end{center}
But if the recurrence relation of the package has the form
$$f_{21}=\mathcal{F}_{21}(f_{11}),f_{12}=\mathcal{F}_{12}(f_{21}),f_{22}=\mathcal{F}_{22}(f_{12}),\ldots,$$
then the recurrence relation runs like:
\begin{center}
\begin{tikzcd}
f_{11} \arrow[r,"\mathcal{F}_{21}",Rightarrow] & f_{21}\arrow[ld,"\mathcal{F}_{12}",Rightarrow]\\
f_{12} \arrow[r,"\mathcal{F}_{22}",Rightarrow] & f_{22}
\end{tikzcd}    
\end{center}

\begin{definition}
\begin{enumerate}
\item Let $f,g$ be two functions defined on the same set and maps to $\mathbb{R}$. We say $f \geq g$ if on every point, the value of $f$ is bigger than that of $g$.
\item We say a functional is ascending if we replace every input function by a bigger one, the value of the functional is a bigger value or a bigger function.
\item Let $\mathcal{F}$, $\mathcal{G}$ be two functionals, then we say $\mathcal{F} \geq \mathcal{G}$, if under the same input, the output function of $\mathcal{F}$ is bigger than or equal to that of $\mathcal{G}$.
\item We say a functional is positive if the input functions being ascending implies that the output function is also ascending.
\end{enumerate}
\end{definition}
\begin{definition}
\begin{enumerate}
\item If $\mathcal{R}_1$, $\mathcal{R}_2$ have the same form, we say $\mathcal{R}_1 \geq \mathcal{R}_2$, if all the functionals in $\mathcal{R}_1$ are bigger than their counterparts in $\mathcal{R}_2$ 
\item If $\mathcal{I}_1$, $\mathcal{I}_2$ have the same form, we say $\mathcal{I}_1 \geq \mathcal{I}_2$, if all the functions in $\mathcal{I}_1$ are bigger than their counterparts in $\mathcal{I}_2$ 
\item We say a package or an initial condition is ascending if it consists of ascending functions.
\item  We say a recurrence relation is ascending (respectively, positive) if it consists of ascending (respectively, positive) functionals.
\end{enumerate}    
\end{definition}
The following proposition is a summary of the inductive process on upper bounds. We will frequently apply this proposition to realize a new package as an upper bound for an old package.
\begin{proposition}
Let $\mathcal{P}$ be a $k$-package determined by recurrence relation $\mathcal{R}$ and initial condition $\mathcal{I}$, $\mathcal{P}'$ be a $k$-package determined by recurrence relation $\mathcal{R}'$ and initial condition $\mathcal{I}'$. Assume:
\begin{enumerate}
\item $\mathcal{R},\mathcal{I}$ and $\mathcal{R}',\mathcal{I}'$ have the same form;
\item $\mathcal{I} \geq \mathcal{I}'$, $\mathcal{R} \geq \mathcal{R}'$;
\item Functionals in $\mathcal{R}$ and $\mathcal{R}'$ are ascending but not necessarily positive.
\end{enumerate}
Then $\mathcal{P} \geq \mathcal{P'}$.
\end{proposition}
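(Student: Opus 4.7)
The plan is to prove $f_{i,j}\geq f'_{i,j}$ for every index $(i,j)\in\{1,\ldots,k\}\times\mathbb{Z}_+$ by strong induction on the well-order introduced earlier on the index set, namely $(i_1,j_1)<(i_2,j_2)$ iff $j_1<j_2$, or $j_1=j_2$ and $i_1<i_2$. Each strict lower-set of this order is finite, so this is an ordinary strong induction on $\mathbb{N}$ after listing the indices in order type $\omega$.

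For the base case, consider those finitely many indices $(i,j)$ prescribed by the initial conditions. Hypothesis (1) says $\mathcal{I}$ and $\mathcal{I}'$ supply functions of the same type at each such index, and hypothesis (2) gives $\mathcal{I}\geq\mathcal{I}'$, hence $f_{i,j}\geq f'_{i,j}$ directly. For the inductive step, fix $(i,j)$ not prescribed by $\mathcal{I}$, so that the recurrence applies:
\[
f_{i,j}=\mathcal{F}_{i,j}\bigl((f_{i',j'})_{(i',j')<(i,j)}\bigr),\qquad f'_{i,j}=\mathcal{F}'_{i,j}\bigl((f'_{i',j'})_{(i',j')<(i,j)}\bigr),
\]
where in each case only a subset of the listed variables actually occurs as an input. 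By the inductive hypothesis, $f_{i',j'}\geq f'_{i',j'}$ for every $(i',j')<(i,j)$. I then chain two comparisons. Since $\mathcal{F}_{i,j}$ is ascending (hypothesis (3)), replacing each input $f_{i',j'}$ by the smaller $f'_{i',j'}$ shrinks the output:
\[
\mathcal{F}_{i,j}\bigl((f_{i',j'})_{(i',j')<(i,j)}\bigr)\geq \mathcal{F}_{i,j}\bigl((f'_{i',j'})_{(i',j')<(i,j)}\bigr).
\]
Since $\mathcal{R}\geq\mathcal{R}'$ in hypothesis (2) means $\mathcal{F}_{i,j}\geq\mathcal{F}'_{i,j}$ as functionals of the same form, evaluating both on the common argument $(f'_{i',j'})$ gives
\[
\mathcal{F}_{i,j}\bigl((f'_{i',j'})_{(i',j')<(i,j)}\bigr)\geq \mathcal{F}'_{i,j}\bigl((f'_{i',j'})_{(i',j')<(i,j)}\bigr)=f'_{i,j}.
\]
Combining the two inequalities yields $f_{i,j}\geq f'_{i,j}$, closing the induction.

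There is essentially no computational obstacle once the bookkeeping is set up; the conceptual point worth underlining is the role of the weakened hypothesis in (3). The ascending property of each $\mathcal{F}_{i,j}$ says precisely that a coordinate-wise comparison of input tuples passes to the output, which is exactly what the induction uses at a single step. We never need the intermediate values $f_{i',j'}$ themselves to be ascending in order to feed them into later stages of the recurrence, so the stronger notion of positivity (propagating ascendingness of inputs to the output) is not needed here; it will be needed only later, when the ascending character of a particular $f_{i,j}$ is invoked as part of another argument.
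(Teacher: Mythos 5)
Your proof is correct and follows exactly the route the paper intends: the paper's own proof is a one-line induction on the index $(i,j)$, and your argument is the natural fleshing-out of that induction, using the ascending property of $\mathcal{F}_{i,j}$ for the first comparison and $\mathcal{F}_{i,j}\geq\mathcal{F}'_{i,j}$ for the second. Your closing remark about why positivity is not needed here (it is only used in the following proposition to propagate ascendingness) is also accurate.
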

\begin{proof}
We prove $f_{\mathcal{P},ij} \geq f_{\mathcal{P}',ij}$ by induction on the index $(i,j)$.  
\end{proof}
Since positive functionals keep ascending property, we have:
\begin{proposition}
Let $\mathcal{P}$ be a $k$-package determined by recurrence relation $\mathcal{R}$ and initial condition $\mathcal{I}$. Assume $\mathcal{I}$ is ascending, $\mathcal{R}$ is positive but not necessarily ascending, then  $\mathcal{P}$ is ascending.  
\end{proposition}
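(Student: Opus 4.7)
The plan is a short induction on the well-order $\leq$ defined on the index set $\{1,\ldots,k\}\times\mathbb{Z}_+$. Because this order is just the lexicographic one with $j$ primary and $i$ secondary, it is order-isomorphic to $(\mathbb{N},\leq)$, so it is a well-order and induction on $(i,j)$ is legitimate. The statement to prove is that every $f_{i,j}$ in the package $\mathcal{P}$ is an ascending function; the assumptions give us exactly the two ingredients needed to propagate ``ascending'' through the rules that define $\mathcal{P}$.

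For the base step, the minimal indices $(i,j)$ (in fact all indices appearing in the initial condition $\mathcal{I}$) carry functions that are ascending by hypothesis, since $\mathcal{I}$ is ascending. For the inductive step, fix $(i,j)$ not in the domain of $\mathcal{I}$, so that the recurrence relation $\mathcal{R}$ determines
\[
f_{i,j}=\mathcal{F}_{i,j}\bigl((f_{i',j'})_{(i',j')<(i,j)}\bigr).
\]
By the inductive hypothesis, every input $f_{i',j'}$ with $(i',j')<(i,j)$ is ascending. Since $\mathcal{R}$ is positive, the functional $\mathcal{F}_{i,j}$ sends tuples of ascending input functions to ascending output functions, and therefore $f_{i,j}$ is ascending. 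This closes the induction and shows that $\mathcal{P}$ is ascending.

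The hard part, if any, is purely a bookkeeping matter: one must be careful that the notion of ``positive'' used here is exactly the one defined in the preceding definition (input functions ascending implies the output function ascending), and that ``ascending'' is understood coordinatewise when the functions in the package take values in a product of copies of $\mathbb{N}$. Once those conventions are pinned down, no monotonicity of $\mathcal{F}_{i,j}$ with respect to enlarging the inputs is used — this is precisely why the proposition, unlike the previous one, does not require $\mathcal{R}$ to be ascending. It is this distinction that the remark ``positive functionals keep the ascending property'' is meant to capture, and the proof is nothing more than a formalization of that remark.
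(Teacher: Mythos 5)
Your proof is correct and matches the paper's (essentially omitted) argument: the paper justifies this proposition with the one-line observation that positive functionals preserve the ascending property, implicitly running the same induction on the well-ordered index set that you spell out. Nothing is missing.
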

In the following sections, it is usually trivial to see that the finite set $\mathcal{I}$ is ascending and the finite sequence of functionals $\mathcal{R}$ consists of ascending and positive functionals. By the above proposition we get that all the functions in the package are ascending. Thus we may work in the following settings in all the following sections with no packages violating it:
\begin{settings}\label{2.11}
We assume the packages and initial conditions are ascending and the recurrence relations are ascending and positive, unless otherwise stated.
\end{settings}
\begin{definition}
A special package is a $5$-package $\mathcal{P}=\{A_{\mathcal{P},d},\bar{A}_{\mathcal{P},d},B_{\mathcal{P},d},\mathcal{B}_{\mathcal{P},d},D_{\mathcal{P},d},d \in \mathbb{Z}_+\}$ such that the domain and the range of the functions are as follows: $A_{\mathcal{P},d}:\mathbb{Z}_+ \to \mathbb{Z}_+$, $\bar{A}_{\mathcal{P},d}:\mathbb{Z}_+ \times \mathbb{N}^d \to \mathbb{Z}_+$, $B_{\mathcal{P},d}:\mathbb{Z}_+ \times \mathbb{N}^d \to \mathbb{Z}_+$, $\mathcal{B}_{\mathcal{P},d}:\mathbb{Z}_+ \times \mathbb{Z}_+ \to \mathbb{Z}_+$, $D_{\mathcal{P},d}:\mathbb{Z}_+ \to \mathbb{Z}_+$. If the package is clear, we omit the $\mathcal{P}$'s in the lower indices.   
\end{definition}
\begin{definition}
Under Settings \ref{2.11}, we say a special package $\mathcal{P}$ is an Ananyan-Hochster package (AH package for short), if the set of functions $A_{\mathcal{P},d},\bar{A}_{\mathcal{P},d},B_{\mathcal{P},d},\mathcal{B}_{\mathcal{P},d},D_{\mathcal{P},d}$ satisfies $PAa_d, PAb_d,$
$ PB_d, PcB_d, PD_d$. We say a recurrence relation $\mathcal{R}$ is AH, if $\mathcal{R}$, together with any compatible initial condition satisfying $PAa_d, PAb_d, PB_d,$
$ PcB_d, PD_d$ for small $d$, determines an AH package. 
\end{definition}
\begin{remark}
We summarize what packages we will have in the following sections. In all, we have 5 packages:
\begin{enumerate}
\item $\mathcal{P}_\alpha$: 5-package with functions $A_d,\bar{A}_d,B_d,\mathcal{B}_d,D_d$;
\item $\mathcal{P}_\beta$: 2-package with functions $\bar{A}_d,B_d$;
\item $\mathcal{P}_\gamma$: 2-package with functions $\bar{A}_d,B_d$;
\item $\mathcal{P}_\zeta$: 2-package with functions $\hat{A}_d,\hat{B}_d$;
\item $\mathcal{P}_\theta$: 2-package with functions $A_d,B_d$.
\end{enumerate}
Roughly speaking, we will derive 2 sequences of inequalities
$$\bar{A}_{\mathcal{P}_\alpha,d}\leq \bar{A}_{\mathcal{P}_\beta,d}\leq \bar{A}_{\mathcal{P}_\gamma,d}\leq \hat{A}_{\mathcal{P}_\zeta,d}\leq A_{\mathcal{P}_\theta,d}$$
and
$$B_{\mathcal{P}_\alpha,d}\leq B_{\mathcal{P}_\beta,d}\leq B_{\mathcal{P}_\gamma,d}\leq \hat{B}_{\mathcal{P}_\zeta,d}\leq B_{\mathcal{P}_\theta,d}.$$
Then we use the fact that $\mathcal{P}_\alpha$ is an AH package to prove $\bar{A}_{\mathcal{P}_\alpha,d}$, $B_{\mathcal{P}_\alpha,d}$ satisfy the corresponding AH conditions, therefore an upper bound for $\bar{A}_{\mathcal{P}_\theta,d}$, $B_{\mathcal{P}_\theta,d}$ will also satisfy the corresponding AH conditions. However, the functions may have different domains and ranges. Some are defined on integers while others are defined on dimension vectors. The precise statement will be clear in the following sections.
\end{remark}

Next, we introduce notations that are needed for the proofs of Lemma \ref{boundcolon} and Lemma \ref{boundP}. 
\begin{notations}
\begin{enumerate}
\item For a finitely generated graded $R$-module $M$, let $\beta_{ij}(M)$ be the graded Betti numbers of $M$ and $\beta_i(M)=\sum_{j}\beta_{ij}(M)$ be the $i$-th Betti number of $M$. The Castelnuovo-Mumford regularity of $M$ is defined as $\reg(M)=\max_{i,j}\{j-i : \beta_{ij}(M)\neq 0\}$.
\item Let $J$ be a monomial ideal. We say $J$ is \textit{strongly stable} if for each monomial $u$ of $J$, $x_i|u$ implies $x_ju/x_i\in I$ for each $j<i$. Let $G(J)$ be the set of minimal monomial generators of $J$ and $D(J)$ be the largest degree of monomials in $G(J)$. If $u$ is a monomial, let $m(u):=\max\{i:x_i|u\}$. By the Eliahou-Kervaire resolution in \cite{EK}, if $J$ is strongly stable, then $\beta_i(J)=\sum_{u\in G(J)} \binom{m(u)-1}{i}$.
\item Let $I$ be a monomial ideal in $K[x_1,\dots,x_N]$ where $K$ is an infinite field, we may assume $I$ is generated by monic monomials, if $K^\prime$ is any other field, then let $I_{K^\prime}$ be the ideal generated by the image of these monomials in $K^\prime[x_1,\dots,x_N]$. Let $\text{gin}_{\text{rlex}}(I)$ be the generic initial ideal of $I$ with respect to the degree reverse lexicographical order. The \textit{zero-generic initial ideal} of $I$ with respect to the degree reverse lexicographical order is defined to be
\[
\text{Gin}_0(I):= (\text{gin}_{\text{rlex}}((\text{gin}_{\text{rlex}}(I))_\mathbb{Q}))_K.
\]
The zero-generic initial ideal is explored in more details in \cite{CS2}. We need this notion in $\S3$ to treat the positive characteristic cases. Notice that in characteristic $0$, the zero-generic initial ideal is equal to the usual generic initial ideal.
\end{enumerate}    
\end{notations}

\section{One AH recurrence relation $\mathcal{R}_{\alpha}$}
Using the notations introduced in Section 2, we can reinterpret Ananyan and Hochster's proof as proving certain recurrence relation $\mathcal{R}_{\alpha}$ is an AH recurrence relation. After that, we may start with any initial condition satisfying AH conditions, and such conditions are easy to find. The recurrence relation $\mathcal{R}_{\alpha}$ is on a special package. It has the form:
$$A_d=\mathcal{F}_{1,d}(D_{d-1},\mathcal{B}_{d-1}),\bar{A}_d=\mathcal{F}_{2,d}(A_d),B_d=\mathcal{F}_{3,d}(\bar{A}_d),\mathcal{B}_d=\mathcal{F}_{4,d}(B_d),D_d=\mathcal{F}_{5,d}(\mathcal{B}_d)$$
and the diagram of this relation looks like:
\begin{center}
\begin{tikzcd}[column sep=small,row sep=large]
& & &\mathcal{B}_{d-1}\in PcB_{d-1}\arrow[r,"\mathcal{F}_{5,d-1}",Rightarrow]\arrow[llld,"\mathcal{F}_{1,d}"',Rightarrow]&D_{d-1} \in PD_{d-1}\arrow[lllld,"\mathcal{F}_{1,d}",Rightarrow]\\
A_d \in PAa_d \arrow[r,"\mathcal{F}_{2,d}"',Rightarrow] & \bar{A}_d \in PAb_d\arrow[r,"\mathcal{F}_{3,d}",Rightarrow]& B_d \in PB_d\arrow[r,"\mathcal{F}_{4,d}",Rightarrow]&\mathcal{B}_d\in PcB_d\arrow[r,"\mathcal{F}_{5,d}",Rightarrow]\arrow[llld,"\mathcal{F}_{1,d+1}"',Rightarrow]&D_d \in PD_d\arrow[lllld,"\mathcal{F}_{1,d+1}",Rightarrow]\\
A_{d+1} \in PAa_{d+1} & & & &
\end{tikzcd}    
\end{center}
Among all the functionals in the relation $\mathcal{R}_\alpha$, only the functional $\mathcal{F}_{3,d}$ is not explicit. First we deal with $\mathcal{F}_{1,d}$, $\mathcal{F}_{2,d}$, $\mathcal{F}_{4,d}$. Then we apply a lemma by Chardin and some knowledge of generic initial ideal to derive $\mathcal{F}_{5,d}$. Finally we will introduce the concept of $F$-decomposition to get an inductively explicit formula for $\mathcal{F}_{3,d}$.
\begin{definition}
Let $d \geq 2$. Consider the following functions:
\begin{enumerate}
\item $A_d: \mathbb{Z}_+ \to \mathbb{Z}_+,$
\item $\bar{A}_d:\mathbb{Z}_+\times\mathbb{N}^d \to \mathbb{Z}_+,$
\item $B_d:\mathbb{Z}_+\times\mathbb{N}^d \to \mathbb{Z}_+,$
\item $\mathcal{B}_{d-1},\mathcal{B}_d:\mathbb{Z}_+\times\mathbb{Z}_+ \to \mathbb{Z}_+,$
\item $D_{d-1},D_d:\mathbb{Z}_+\to \mathbb{Z}_+.$
\end{enumerate}
We define the following functionals:
\begin{enumerate}
\item Let $\mathcal{F}_{1,d}$ be the functional on $D_{d-1},\mathcal{B}_{d-1}$ with 
$$\mathcal{F}_{1,d}(D_{d-1},\mathcal{B}_{d-1}): k \mapsto \mathcal{B}_{d-1}(3,D_{d-1}(k+1))+1.$$
\item Let $\mathcal{F}_{2,d}$ be the functional on $A_d$ with 
$$\mathcal{F}_{2,d}(A_d): (\eta,\delta) \mapsto |\delta|-1+A_d(\eta+3|\delta|).$$
\item Let $\mathcal{F}_{4,d}$ be the functional on $B_d$ with 
$$\mathcal{F}_{4,d}(B_d): (\eta,n) \mapsto B_d(\eta,n\tilde{e}_d).$$
\item Let $\mathcal{F}_{5,d}$ be the functional on $\mathcal{B}_d$ with 
$$\mathcal{F}_{5,d}(\mathcal{B}_d): k \mapsto d^{2^{\mathcal{B}_d(1,k)}}.$$
\end{enumerate}    
\end{definition}
\begin{theorem}
Let $d \geq 2$. Use the notations in the above definition and assume all the functions are ascending.
\begin{enumerate}
\item Assume $D_{d-1}$ satisfies $PD_{d-1}$ and $\mathcal{B}_{d-1}$ satisfies $PcB_{d-1}$. If $A_d=$

$\mathcal{F}_{1,d}(D_{d-1},\mathcal{B}_{d-1})$, then $A_d$ satisfies $PAa_d$.
\item Assume $A_d$ satisfies $PAa_d$. If $\bar{A}_d=\mathcal{F}_{2,d}(A_d)$, then $\bar{A}_d$ satisfies $PAb_d$.
\item Assume $B_d$ satisfies $PB_d$. If $\mathcal{B}_d=\mathcal{F}_{4,d}(B_d)$, then $\mathcal{B}_d$ satisfies $PcB_d$.
\end{enumerate}
\end{theorem}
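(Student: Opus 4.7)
The three parts have distinct flavors: (3) is a monotonicity observation, (1) is a Jacobian-collapse contradiction, and (2) is an induction engineered to absorb strength losses under quotients.

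For (3), I would observe that for any $V$ with $\dim V=n$ and $\deg V\le d$, the dimension vector $\delta=\underline{\dim}V\in\mathbb{N}^d$ satisfies $\delta_i\le n$ for each $i$ since $\sum_i\delta_i=n$, hence $\delta\le n\tilde{e}_d$ coordinatewise. Applying $PB_d$ to $V$ and invoking the ascending property of $B_d$ from Settings 2.10, the subalgebra bound for $V$ is at most $B_d(\eta,\delta)\le B_d(\eta,n\tilde{e}_d)=\mathcal{B}_d(\eta,n)$, which is $PcB_d$.

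For (1), I would argue by contradiction. Suppose $0\ne F\in R^h$ has degree $\le d$ and $\str(F)\ge A_d(\eta)=\mathcal{B}_{d-1}(3,D_{d-1}(\eta+1))+1$, but $R/FR$ fails $R_\eta$. The Jacobian criterion then produces a prime $Q\subset R$ of height $\le\eta+1$ containing $\mathcal{D}F$ (and $F$; in positive characteristic, $F$ is adjoined throughout). Choose a regular sequence $g_1,\ldots,g_t\in\mathcal{D}F$ with $t\le\eta+1$ whose generated ideal has $Q$ as a minimal prime. By $PD_{d-1}$ the minimal prime $Q$ is generated by at most $D_{d-1}(\eta+1)$ elements, and their degree-$(d-1)$ parts span a subspace $W$ with $\dim W\le D_{d-1}(\eta+1)$, $\deg W\le d-1$, containing $\mathcal{D}F$. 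Applying $PcB_{d-1}$ to $W$ embeds it in a subalgebra $K[h_1,\ldots,h_m]$ with $h_1,\ldots,h_m$ an $R_3$-sequence of forms of degree $\le d-1$ and length $m\le\mathcal{B}_{d-1}(3,D_{d-1}(\eta+1))=A_d(\eta)-1$. Since the $h_i$ are positive-degree forms, $\mathcal{D}F\subseteq(h_1,\ldots,h_m)$ as an $R$-ideal, so Euler's identity forces $F\in(h_1,\ldots,h_m)$: a collapse of length $\le m<\str(F)$, contradicting the strength hypothesis.

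For (2), I would induct on $i$ for initial segments $f_1,\ldots,f_i$ of a $K$-linearly independent sequence in $V$, showing $R/(f_1,\ldots,f_i)$ is $R_\eta$. The hypothesis $\str(V_j)\ge|\delta|-1+A_d(\eta+3|\delta|)$ supplies two buffers: the additive $|\delta|-1$ absorbs the unit-per-form strength loss as each $f_i$ is reduced modulo the ideal of previous elements (via the standard fact that $\str$ drops by at most one when modding out a single form), while the shift $\eta\mapsto\eta+3|\delta|$ compensates for codimension deficiencies that accumulate across iterations. At each step I would apply $PAa_d$ at the shifted codimension to propagate the $R_\eta$-property through the quotient.

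The principal obstacle is the bookkeeping in part (1): securing an honest ideal containment $\mathcal{D}F\subseteq(h_1,\ldots,h_m)$ rather than a mere radical one, and verifying that restricting the minimal generators of $Q$ to degree $d-1$ still captures all of $\mathcal{D}F$. This requires a coordinated choice of the regular sequence $g_1,\ldots,g_t$, the minimal prime $Q$, and the spanning subspace $W$ so that $\mathcal{D}F$ lies honestly inside $K[h_1,\ldots,h_m]$.
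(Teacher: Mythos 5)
Your proposal is correct and matches the paper's route: for (1) and (2) the paper simply defers to the inductive proofs of Theorems A(a) and A(b) in Section 4 of Ananyan--Hochster, and your sketches are reconstructions of exactly those arguments (including the collapse-via-$PD_{d-1}$-and-$PcB_{d-1}$ contradiction, whose "honest containment" worry is resolved by degree reasons since $\mathcal{D}F$ sits in degree $d-1$); for (3) your monotonicity argument is verbatim the paper's. No further comparison is needed.
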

\begin{proof}
(iii) follows since we assume $B_d$ is ascending, and $|\delta|=n$ implies $\delta_i\leq n$, so every component of $\delta$ is smaller than that component of $n\tilde{e}_d$. (i) and (ii) follow from the inductive proof of Theorem A(a) and Theorem A(b) in Section 4 of \cite{AH1}. Since the proofs of (i) and (ii) are more difficult, we include detailed proofs of (i) and (ii) for the reader's convenience.

To prove (i), we fix $\eta$, assume $F \in R=K[x_1,\ldots,x_N]$ is homogeneous of degree at most $d$ such that $\str(F) \geq \mathcal{B}_{d-1}(3,D_{d-1}(\eta+1))+1$. The Theorem F of \cite{AH1} says if $\deg(F)\leq d$ and $\str(F) \geq \mathcal{B}_{d-1}(3,h)+1$, then $\mathcal{D}F$ is not contained in an ideal generated by $h$-forms of degree at most $d-1$ where $\mathcal{D}F$ is the $K$-vector space generated by partial derivatives of $F$. Thus applying Theorem F to the situation of (i), we see $\mathcal{D}F$ is not contained in an ideal generated by $D_{d-1}(\eta+1)$-forms of degree at most $d-1$.

Let $(\mathcal{D}F)R$ be the $R$-ideal generated by $\mathcal{D}F$. If the height of $(\mathcal{D}F)R$ in $R$ is at most $\eta+1$, then there is a regular sequence $\underline{f}$ of length $(\eta+1)$ whose degrees are at most $d-1$, and $P\in \operatorname{Min}(\underline{f})$ such that $(\mathcal{D}F)R \subset P$. In this case the number of generators of $P$ is at most $D_{d-1}(\eta+1)$ by definition of property $PD_d$. This means $\mathcal{D}F$ is contained in an ideal generated by $D_{d-1}(\eta+1)$, which is a contradiction. This means the height of $(\mathcal{D}F)R$ in $R$ is at least $\eta+2$, so the height of $(\mathcal{D}F)R$ in $R/FR$ is at least $\eta+1$, which implies $R/FR$ is $(R_\eta)$, and we get (i).

To prove (ii), we choose a graded vector space $V \subset R$ with $\underline{\dim}V=\delta$ such that $\str(V)\geq |\delta|-1+A_d(\eta+3|\delta|)$, and prove any homogeneous $K$-basis of $V$ forms an $R_\eta$-sequence. We induct on $\dim V=|\delta|$. If $\dim V=1$, then the $K$-basis consists of a single element $F$, and $\str(V)=\str(F)\geq A_d(\eta+3|\delta|)$ implies $R/VR=R/FR$ is $R_{\eta+3|\delta|}$, hence is $R_\eta$. Suppose (ii) is proved for $\dim V=n-1$, and now assume $\dim V=n$. Choose any homogeneous $K$-basis $f_1,\ldots,f_n$. Let $V'$ be the graded $K$-vector space generated by $f_1,\ldots,f_{n-1}$, then $\str(V')\geq \str(V)$, so we apply the induction hypothesis to get $f_1,\ldots,f_{n-1}$ is an $R_\eta$-sequence. We see $f_n$ cannot lie in the ideal generated by $f_1,\ldots,f_{n-1}$, otherwise this is saying $\str(V)\leq \str(f_n)\leq n-1$, but by assumption $\str(V)>n-1$ which is a contradiction. In particular, $f_1,\ldots,f_{n-1}$ is a prime sequence and the image of $f_n$ is a nonzero element in the domain $R/(f_1,\ldots,f_{n-1})R$, so $f_1,\ldots,f_n$ is a regular sequence. Since $\str(V_j)\geq A_d(\eta+3|\delta|)$ for any $j$, for any $F\in V$ homogeneous, $\str(F)\geq A_d(\eta+3|\delta|)$, so by definition of $PAa_d$, $R/FR$ is $R_{\eta+3|\delta|}$, and height of $(\mathcal{D}F)R$ in $R$ is at least $\eta+3|\delta|+1$. Now we apply Theorem 2.5 of \cite{AH1}, which says the following: if $V$ is a graded vector space whose homogeneous $K$-basis forms a regular sequence, and for any homogeneous element $F \in V$, height of $(\mathcal{D}F)R$ is at least $\eta+3|\delta|+1$, then $R/VR$ is $R_\eta$. So $R/(f_1,\ldots,f_n)R$ is $R_\eta$. This means $f_1,\ldots,f_n$ is an $R_\eta$-sequence.
\end{proof}
We want to prove a similar result for $\mathcal{F}_{5,d}$. We will build a shortcut for this functional instead of directly applying the argument in \cite{AH1}. We start with the following lemma:
\begin{lemma}[Chardin \cite{Chardin}] \label{colon}
	Let $P\subset K[x_1,\dots,x_N]$ be a minimal prime of an ideal generated by a homogeneous regular sequence $f_1,\dots,f_k$ of degrees $d_1,\dots,d_k$. There exists a form $f$ of degree at most $d_1+\cdots+d_k-k$ such that
	\[
	P=(f_1,\dots,f_k):(f).
	\]
\end{lemma}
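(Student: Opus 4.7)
The proof reduces to two facts: (a) any homogeneous $f \in (I:P) \setminus I$ automatically satisfies $I:f = P$; and (b) the graded module $M := (I:P)/I$ contains a nonzero homogeneous element of degree at most $\sigma := d_1 + \cdots + d_k - k$.

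For (a), the ring $R/I$ is Cohen--Macaulay (a complete intersection), hence unmixed. The primary decomposition $I = \bigcap_{i=1}^r Q_i$ has only the minimal primes $P = P_1, P_2, \ldots, P_r$ as associated primes. Incomparability of these minimal primes yields $Q_i : P_1 = Q_i$ for $i \ge 2$ (using that any $p \in P_1 \setminus P_i$ is a nonzerodivisor on $R/Q_i$), whence $I:P = (Q_1 : P_1) \cap \bigcap_{i \ge 2} Q_i$. Any $f \in (I:P) \setminus I$ therefore lies in each $Q_i$ for $i \ge 2$ but not in $Q_1$. Then $I : f = Q_1 : f$ is $P$-primary and contained in $P$; combined with $P \subseteq I:f$ (immediate from $f \in I:P$), this gives $I : f = P$.

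For (b), the complete intersection $R/I$ is Gorenstein with Castelnuovo--Mumford regularity $\sigma$ and canonical module $\omega_{R/I} \cong (R/I)(\sigma - n)$, where $n = N - k = \dim(R/I)$, both computable directly from the Koszul resolution. I would then pass to an Artinian reduction: choose a general linear system of parameters $\ell_1, \ldots, \ell_n$ for $R/I$, which by prime avoidance is also a system of parameters for $R/P$. The quotient $B := R/(I + (\ell_1, \ldots, \ell_n))$ is itself a complete intersection (of $N$ forms of degrees $d_1, \ldots, d_k, 1, \ldots, 1$), hence Artinian Gorenstein with socle concentrated in degree $\sigma$. The image $\bar P$ of $P$ in $B$ is a proper ideal, because $R/(P + (\ell_1,\ldots,\ell_n))$ is a nonzero Artinian ring. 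By the perfect graded Gorenstein pairing on $B$, one gets $\dim_K \operatorname{ann}_B(\bar P) = \dim_K(B/\bar P) > 0$, so there is a nonzero homogeneous element in $\operatorname{ann}_B(\bar P)$, necessarily of degree at most $\sigma$.

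It remains to lift such an element back to a nonzero homogeneous class of $M$ of the same degree, at which point combining with (a) produces the desired form $f$. Concretely, the natural map $M \to \operatorname{ann}_B(\bar P)$ induced by reduction modulo $(\ell_1,\ldots,\ell_n)$ is surjective; this can be deduced from the Gorenstein identification $M \cong \omega_{R/P}(n - \sigma)$ together with a Tor-vanishing argument for generic hyperplane sections. I expect this lifting step to be the main technical obstacle, because $R/P$ is not in general Cohen--Macaulay, so one cannot argue naively that $\ell_1, \ldots, \ell_n$ is a regular sequence on $(I:P)/I$; instead one has to exploit the Gorenstein structure of $R/I$ via graded local duality to transfer the low-degree element produced in $B$ back to an honest element of $M$.
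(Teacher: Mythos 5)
First, note that the paper does not prove this lemma at all: it is quoted from Chardin's work, so there is no internal proof to compare against, and your argument has to stand on its own.

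Your part (a) is correct and standard: since $R/I$ is Cohen--Macaulay, $I$ is unmixed, the primary components away from $P$ absorb the colon by $P$, and any homogeneous $f\in (I:P)\setminus I$ gives $I:f=Q_1:f=P$. The genuine gap is exactly where you flag it, in the lifting step of part (b), and it is not a repairable technicality of the kind you suggest. Two points. First, the output of your Artinian computation is vacuous on its own: the socle of $B$ sits in degree $\sigma$ and is annihilated by every proper ideal, so ``$\operatorname{ann}_B(\bar P)$ contains a nonzero form of degree $\le\sigma$'' carries no information until you lift it; all of the content of the lemma is concentrated in the lifting. Second, the surjectivity of $M\to\operatorname{ann}_B(\bar P)$, i.e.\ the identity $(I+(\ell)):P=(I:P)+(\ell)$ for general linear $\ell$, is obstructed by the modules $\operatorname{Ext}^i_{R/I}(R/P,R/I)\cong \operatorname{Ext}^{k+i}_R(R/P,R)(\textup{twist})$ for $i\ge 1$: cutting by one general linear form already introduces a cokernel isomorphic to the socle of $\operatorname{Ext}^1$ with respect to that form. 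These Ext modules are graded duals of the deficiency modules $H^{n-i}_{\mathfrak m}(R/P)$; they vanish precisely when $R/P$ is Cohen--Macaulay, and when $R/P$ is not Cohen--Macaulay they can have finite length (e.g.\ for the cone over the rational quartic curve), in which case \emph{no} linear form is a nonzerodivisor on them and no genericity assumption rescues the Tor/Ext-vanishing you invoke. So the proposed mechanism fails exactly in the case you admit you cannot exclude.

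What is actually needed, as your own reduction shows, is the inequality $\operatorname{indeg}(\omega_{R/P})\le n=\dim(R/P)$, since $M\cong\omega_{R/P}(n-\sigma)$. This is true but requires a different input: for instance, take a linear Noether normalization $T=K[\theta_1,\dots,\theta_n]\subseteq R/P$ and use $\omega_{R/P}\cong\operatorname{Hom}_T(R/P,T)(-n)$, so that it suffices to produce a nonzero $T$-linear functional $R/P\to T$ of degree $\le 0$; the field trace of $\operatorname{Frac}(R/P)$ over $\operatorname{Frac}(T)$ (for a projection chosen to be separable) is such a functional of degree $0$. Equivalently, one needs $H^{n-1}(X,\mathcal O_X(-n))\neq 0$ for the integral projective scheme $X=\operatorname{Proj}(R/P)$, which is again the trace of a generic finite linear projection to $\mathbb P^{n-1}$. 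Without some argument of this kind, part (b) is not established.
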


The next two lemmas justify the fact that we can choose $D_d(k)$ to be $(2d)^{2^{\prescript{1}{}{\mathcal{B}(k,d)}-1}}$. Assuming the number of variables is known, we show in Lemma \ref{boundcolon} how to bound the minimal number of generators of a particular kind of colon ideals as in Lemma \ref{colon}. To obtain a sharper bound, we extensively apply results and proofs of \cite{CS1} and \cite{CS2}.
\begin{lemma} \label{boundcolon}
	Let $I\subset K[x_1,\dots,x_{B+1}]$ be an ideal generated by a regular sequence of $c$ forms of degree at most $d$, and $f$ be a form of degree at most $cd-c$. Then the minimal number of generators of $I:(f)$ is bounded by
	\[
	\beta_0(I:(f))\leq B(d+1)(2d)\prod_{i=3}^{B}((d^2+2d-1)^{2^{i-3}}+1)+1.
	\]
	If we further assume $B,d\geq 3$ or $B\geq 4$, then we have the additional inequality
	\[
	\beta_0(I:(f)) \leq (2d)^{2^{B-1}}.
	\]
\end{lemma}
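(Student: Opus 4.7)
The plan is to reduce to the strongly stable monomial setting by passing to the (zero\nobreakdash-)generic initial ideal of $I:(f)$ with respect to the degree reverse lexicographic order, as set up in the notations section. In any characteristic, this yields a strongly stable monomial ideal $J$ in $B+1$ variables whose minimal number of generators equals $\beta_0(I:(f))$, and for which the Eliahou--Kervaire formula recalled earlier gives $\beta_0(J)=|G(J)|$, bounded above by a polynomial in the largest generator degree $D(J)$. The whole lemma therefore reduces to an upper bound on $D(J)$, i.e.\ to a regularity-type control on the colon ideal $I:(f)$.

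For the degree control, I would invoke the inductive machinery of Caviglia--Sbarra from \cite{CS1} and \cite{CS2}, which bounds $\reg(R/I)$ by peeling off one variable at a time. Starting from $c\leq B+1$ forms of degree $\leq d$ colonned by a form of degree at most $c(d-1)\leq B(d-1)$, I would prove by induction on $i$ from $3$ up to $B$ that after reducing to $i$ variables the relevant degree bound is at most $(d^2+2d-1)^{2^{i-3}}+1$. The doubling of the exponent at each step comes from a Macaulay--Gotzmann type estimate for how the Hilbert function grows under the revlex reduction that drops one variable; the base $d^2+2d-1$ arises from combining a degree $d$ from the generators of $I$ with a degree roughly $d+1$ coming from the Gotzmann shift, and the shift $+1$ absorbs a floor-to-ceiling gap in that estimate. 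The overall multiplicative prefactor $B(d+1)(2d)$ then handles the base of the induction, together with the conversion from a degree bound $D(J)$ into a generator count in $B+1$ variables.

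For the cleaner second inequality, observe that under the hypotheses $B,d\geq 3$ or $B\geq 4$ one has $d^2+2d-1\leq(2d)^2$, hence $(d^2+2d-1)^{2^{i-3}}+1\leq(2d)^{2^{i-2}}$ once the additive $+1$ is absorbed into the larger base. The product then telescopes:
\[
\prod_{i=3}^{B}(2d)^{2^{i-2}}=(2d)^{\sum_{i=3}^{B}2^{i-2}}=(2d)^{2^{B-1}-2},
\]
and the remaining prefactor $B(d+1)(2d)$ together with the outer $+1$ is dominated by the extra factor $(2d)^2$ made available by the standing hypotheses, yielding the claimed $(2d)^{2^{B-1}}$.

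The main obstacle I anticipate is the bookkeeping inside the induction step, where one must verify that the single-variable reduction---a colon by a generic form followed by setting the last variable to zero, done at the level of the revlex (zero\nobreakdash-)generic initial ideal---inflates the current degree bound by exactly the claimed factor $(d^2+2d-1)^{2^{i-3}}+1$ and no more, and that the resulting ideal in $i-1$ variables still satisfies the hypotheses needed to reapply the induction. This step leans heavily on the technical results of \cite{CS2} concerning $\textup{Gin}_0$ in revlex, which I would cite and combine with \Cref{colon} rather than reprove from scratch.
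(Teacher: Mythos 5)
Your overall architecture (generic initial ideal, Eliahou--Kervaire, Caviglia--Sbarra regularity recursion, telescoping the product) matches the paper's, but you apply it to the wrong ideal, and this creates a genuine gap. You propose to take $J=\mathrm{Gin}_0(I:(f))$ and bound $|G(J)|$ through the degrees $D(J_{[i]})$, which requires a regularity bound for $I:(f)$ itself. The Caviglia--Sbarra recursion you invoke, however, has the shape
\[
\reg(\cdot_{\langle i\rangle})\ \leq\ \max\{\text{generator degrees},\ \reg(\cdot_{\langle i-1\rangle})\}\ +\ \lambda(\cdot)\prod_j\reg(\cdot_{\langle j\rangle}),
\]
so it needs a priori control on the degrees of the generators of the ideal it is fed. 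For $I:(f)$ those degrees are precisely the unknown we are trying to bound, so running the induction ``on the colon ideal'' as you describe is circular. (Also, $\beta_0$ of a generic initial ideal only bounds $\beta_0(I:(f))$ from above; it is not an equality, though the inequality does go the right way for your purposes.)

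The missing idea is to pivot from $I:(f)$ to $I+(f)$, which \emph{is} generated in known degrees $\leq\max\{d,cd-c\}$. From the short exact sequence $0\to R/(I:(f))\xrightarrow{\ f\ }R/I\to R/(I+(f))\to 0$ and the long exact sequence of $\mathrm{Tor}^R_\bullet(-,K)$ one gets $\beta_0(I:(f))\leq\beta_1(I+(f))+1$; the paper then applies $\mathrm{Gin}_0$, Eliahou--Kervaire for $\beta_1$ (whence the factor $m(u)-1\leq B$ in the prefactor), and the Caviglia--Sbarra recursion to $I+(f)$, using that $R_{[c]}/(I+(f))_{\langle c\rangle}$ is Artinian of length at most $d^c$ to drive the doubling $B_j\leq B_{j-1}^2$. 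Your route could in principle be patched by bounding $\reg(I:(f))$ from the same short exact sequence in terms of $\reg(I+(f))$, but the regularity computation must still be routed through $I+(f)$. Finally, your absorption of the prefactor in the second inequality is too quick: $B(d+1)(2d)$ is not dominated by $(2d)^2$ for $B\geq 2$; one must first use $B\leq 2^{B-2}$ and distribute the resulting powers of $2$ into the factors of the product, as the paper does.
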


\begin{proof}
	If $c=1$, then $\beta_0(I:(f))= 1$, so assume $c\geq 2$. After a faithfully flat base change, we may assume $K$ is infinite. Denote $R:=K[x_1,\dots,x_{B+1}]$ and let $f_1,\dots,f_c$ be the regular sequence with deg$(f_i)\leq d$. Consider the exact sequence
	\begin{equation*}
		0\to R/(I:(f))\;{\xrightarrow {\ f\ }}\;R/I\;{\xrightarrow {\ \ }}\;R/(I+(f))\to 0.
	\end{equation*}
	Since $c\leq \beta_0(I+(f))\leq c+1$, using the long exact sequence of Tor$_i^{R}(-,K)$ induced from the above short exact sequence, we get $\beta_0(I:(f))\leq \beta_1(I+(f))+1$.
	
	With the notations of Section 2, let $J:=\text{Gin}_0(I+(f))$. Denote ${R}_{[i]}:=K[x_1,\dots,x_i]$. Let $(I+(f))_{\langle i \rangle}$ denote the image of $I+(f)$ in $R/(l_{B+1},\dots,l_{i+1})\cong R_{[i]}$ where $l_{B+1},\dots,l_{i+1}$ are general linear forms, and let $m_{[i]}$ denote the homogeneous maximal ideal of $R_{[i]}$. Let $J_{[i]}$ denote $J\cap R_{[i]}$. By \cite[Proposition 2.2]{AH2}, $J$ is strongly stable with $\beta_1(I+(f))\leq \beta_1(J)$. So by \cite{EK} and \cite[Proposition 1.6]{CS1}, $\beta_1(J)=\sum_{u\in G(J)}(m(u)-1)\leq B\cdot \lvert G(J) \rvert\leq B\prod_{i=1}^{B}(D(J_{[i]})+1)$. Using \cite[Theorem 2.20]{CS2}, we can get $D(J_{[i]})\leq \reg((I+(f))_{\langle i \rangle})$ for all $i$. Notice that $\reg((I+(f))_{\langle i \rangle})\leq id-i+1$ for all $i\leq c$, because $m_{[i]}^{id-i+1}\subseteq (I+(f))_{\langle i \rangle}$.
	
	To bound $\reg((I+(f))_{\langle i \rangle})$ for $i\geq c+1$, we follow the proof of \cite[Theorem 2.4 and Corollary 2.6]{CS1}. Let $\lambda(M)$ denote the length of an Artinian module $M$. Using the same proof of \cite[Theorem 2.4]{CS1}, we can get
    \begin{equation}\label{regrecur}
    \begin{split}
    \reg((I+(f))_{\langle i \rangle})\\
    \leq \text{max}\{d,cd-c,\reg((I+(f))_{\langle i-1 \rangle})\}+\lambda\left(\frac{R_{[c]}}{(I+(f))_{\langle c \rangle}}\right)\prod_{j=c+2}^{i} \reg((I+(f))_{\langle j-1 \rangle})\\
    \leq \text{max}\{d,cd-c,\reg((I+(f))_{\langle i-1 \rangle})\}+d^c\prod_{j=c+1}^{i-1} \reg((I+(f))_{\langle j \rangle}).    
    \end{split}
    \end{equation}
	The last inequality holds since $R_{[c]}/(I+(f))_{\left<c\right>}$ is a quotient ring of $R_{[c]}/(g_1,\dots,g_c)$, where $g_1,\dots,g_c$ are the images of $f_1,\dots,f_c$ in $R_{[c]}$ and form a regular sequence with deg$(g_i)\leq d$.
	
	Now we use (\ref{regrecur}) recursively to bound $\reg((I+(f))_{\langle i \rangle})$ for $i\geq c+1$. Set $B_0:=cd-c+1$, and recall that $B_0$ bounds $\reg((I+(f))_{\langle c \rangle})$. Apply (\ref{regrecur}) to $(I+(f))_{\langle c+1 \rangle}$ to get $\reg((I+(f))_{\langle c+1 \rangle})\leq cd-c+1+d^c=:B_1$. For $j\geq2$, we set $B_j:=B_{j-1}+d^c\prod_{k=1}^{j-1} B_k  \leq (B_{j-1})^2 \leq (B_1)^{2^{j-1}}$. Hence for all $i\geq c+1$, $\reg((I+(f))_{\langle i \rangle})\leq B_{i-c}\leq (cd-c+1+d^c)^{2^{i-c-1}}\leq (d^2+2d-1)^{2^{i-3}}$, where the last inequality holds since the second last bound is decreasing as a function of $c$ and $c\geq2$.
	
	If $B\geq 4$, then $B\leq 2^{B-2}$. Combining all the previous inequalities, we get
	\begin{equation*}
		\begin{split}
			\beta_0(I:(f))
			&\leq B(d+1)(2d)\prod_{i=3}^{B}((d^2+2d-1)^{2^{i-3}}+1)+1\\
			&\leq 2^{B-2}(d+1)(2d)\prod_{i=3}^{B}(d^2+2d)^{2^{i-3}} \leq (2d)(2d)\prod_{i=3}^{B}(2d^2+4d)^{2^{i-3}} \\
			&\leq (2d)^2\prod_{i=3}^{B}(2d)^{2^{i-2}}=(2d)^{2^{B-1}}.
		\end{split}
	\end{equation*}
	If $B=3$ and $d\geq 3$, one easily checks that $\beta_0(I:(f))\leq 3(d+1)(2d)(d^2+2d)+1\leq (2d)^4$.
\end{proof}

Given a function $\mathcal{B}_d(1,k)$ satisfying $PcB_d$, Lemma \ref{boundP} constructs our value $(2d)^{2^{\mathcal{B}_d(1,k)}-1}$ for the bound $D_d(k)$ satisfying $PD_d$ by passing to a polynomial subring with at most $\mathcal{B}_d(1,k)+1$ many variables first, using the result of Lemma \ref{colon} and Lemma \ref{boundcolon}.
\begin{lemma} \label{boundP}
Let $K$ be an algebraically closed field, $P\subset K[x_1,\dots,x_N]$ be a minimal prime of an ideal generated by a regular sequence of $k$ or fewer forms of degree at most $d$. Assume the function $\mathcal{B}_d(1,\cdot)$ satisfies $PcB_d$ for $\eta=1$, and let $B=\mathcal{B}_d(1,k)$ be its value at $(1,k)$. Then the minimal number of generators of $P$ is bounded by
\[
\beta_0(P)\leq (2d)^{2^{B-1}}.
\]
If $d \geq 2$, then
\[
\beta_0(P)\leq d^{2^B}.
\]
 
\end{lemma}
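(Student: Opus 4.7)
The plan is to realize the minimal prime $P$ as the extension to $R$ of a colon ideal living in a polynomial subring of $R$ with only $B+1$ variables, and then to apply Lemma~\ref{boundcolon} in that small subring.

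First I would apply Chardin's Lemma~\ref{colon} to the regular sequence $f_1,\ldots,f_c$ (of length $c\leq k$ and degree at most $d$) to obtain a form $f\in R$ of degree at most $cd-c$ with $P=(f_1,\ldots,f_c)R:(f)$. Then I would invoke property $PcB_d$ at $(\eta,n)=(1,k)$ on the vector space $V=\operatorname{span}_K(f_1,\ldots,f_c)$: since $\dim V=c\leq k$ and $\deg V\leq d$, this embeds $V$ in a polynomial subring $T_0=K[y_1,\ldots,y_B]\subseteq R$, where $y_1,\ldots,y_B$ is an $R_1$-sequence of forms of degree at most $d$. Because $R_1$-sequences in $R^h$ are prime sequences, the $y_i$ are algebraically independent and $T_0$ is a polynomial ring in $B$ variables.

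Next, I would adjoin $f$ to form $T_1:=T_0[f]$. In the generic case $f$ is algebraically independent over $T_0$ and $y_1,\ldots,y_B,f$ is a regular sequence in $R$ (the first $B$ already form an $R_1$-sequence, and $R/(y_1,\ldots,y_B)R$ is a domain in which $f$ is nonzero). Then $T_1\cong K[y_1,\ldots,y_B,f]$ is a polynomial ring in $B+1$ variables, and Hironaka's miracle flatness theorem applies since $T_1$ is regular, $R$ is Cohen-Macaulay, and $\dim R=\dim T_1+\dim R/(y_1,\ldots,y_B,f)R$. Hence $T_1\hookrightarrow R$ is flat, and flat extensions make colon ideals commute with base change. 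Setting $P_1:=(f_1,\ldots,f_c)T_1:(f)$ inside $T_1$, we obtain $P_1 R=P$, and in particular $\beta_0(P)\leq\beta_0(P_1)$ (minimal generators of $P_1$ extend to generators of $P$). Applying Lemma~\ref{boundcolon} in $T_1$---a polynomial ring in $B+1$ variables in which $f_1,\ldots,f_c$ is still a regular sequence of $c\leq k$ forms of degree at most $d$ (by a dimension count in the Cohen-Macaulay ring $T_1$) and $f$ has degree at most $cd-c$---yields $\beta_0(P_1)\leq(2d)^{2^{B-1}}$. The second bound then follows from $d\geq 2$: since $2d\leq d^2$, we have $(2d)^{2^{B-1}}\leq(d^2)^{2^{B-1}}=d^{2^B}$.

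The hardest part will be the degenerate case: if $f\in(y_1,\ldots,y_B)R$, then $y_1,\ldots,y_B,f$ fails to be a regular sequence in $R$ and the miracle flatness argument breaks, even though $T_1$ may still be a polynomial ring in $B+1$ variables. One must then replace $f$ by a form $f'$ of the same degree with $P=(f_1,\ldots,f_c)R:(f')$ and $f'\notin(y_1,\ldots,y_B)R$---for example by adding a suitable element of $(f_1,\ldots,f_c)R$ (which preserves the colon) or by choosing a different representative provided by Chardin's lemma. A further technicality is that Lemma~\ref{boundcolon} states two bounds with different ranges of validity; one should consolidate them to obtain the clean form $(2d)^{2^{B-1}}$ uniformly for all relevant values of $B$ and $d$.
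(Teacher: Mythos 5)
Your overall strategy (Chardin's lemma plus the small-subalgebra property plus Lemma~\ref{boundcolon}) is the right one, but the way you try to land in a ring with $B+1$ variables has a genuine gap. You apply Lemma~\ref{boundcolon} inside $T_1=K[y_1,\dots,y_B,f]$, but that lemma is a statement about \emph{forms} in a \emph{standard graded} polynomial ring: its proof runs through generic initial ideals, Eliahou--Kervaire, and regularity bounds, all of which presuppose homogeneity in a grading where the variables have degree one. The $y_i$ have degrees up to $d$ in $R$, so $f_1,\dots,f_c$, while homogeneous for the grading inherited from $R$, are in general \emph{not} homogeneous for the standard grading of $T_1$ (e.g.\ $y_1^2+y_2$ with $\deg_R y_1=1$, $\deg_R y_2=2$). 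So Lemma~\ref{boundcolon} cannot be invoked in $T_1$ as stated; one would need an extra device (such as the substitution $y_i\mapsto z_i^{d_i}$ used in Section~7) or a different reduction. Your fallback for the degenerate case also fails: since each $f_i\in T_0\subseteq (y_1,\dots,y_B)R$, adding an element of $(f_1,\dots,f_c)R$ to $f$ cannot move $f$ out of $(y_1,\dots,y_B)R$. Finally, the ``consolidation'' of the two bounds in Lemma~\ref{boundcolon} is not a formality: for $B=3$, $d=2$ the explicit product bound is $289>4^{2^2}=256$, so a genuinely separate argument is needed there.

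The paper's proof avoids all three problems by reducing \emph{downward} instead of building a subring containing $f$. It uses $PcB_d$ only to get $f_1,\dots,f_c\in S=K[G_1,\dots,G_s]$ with $s\le B$, shows $R$ is free over $S$ (extend $G_1,\dots,G_s$ to a maximal regular sequence), deduces $P=(P\cap S)R$ via the prime-extension result of Ananyan--Hochster and hence $\pd_R(R/P)\le s$ and $\pd_R(R/(I+(f)))\le s+1$; then by Auslander--Buchsbaum one can cut by $N-s-1$ general \emph{linear} forms regular on $R/P$, $R/(I+(f))$ and $R/I$ simultaneously. This lands in an honest standard graded polynomial ring $\overline{R}$ in $s+1\le B+1$ variables where $\overline{P}=\overline{I}:(\overline{f})$ with $\overline{f_1},\dots,\overline{f_c}$ still forms of degree at most $d$, $\beta_0(P)=\beta_0(\overline{P})$, and Lemma~\ref{boundcolon} applies verbatim. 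The remaining small cases are then handled directly: $B=2$ gives $\beta_0(P)\le 2$, and $B=3$, $d=2$ is settled by a multiplicity argument ($e(R/P)\le 4$, and either $P$ is a complete intersection or $e(R/P)=1+\het(P)$ forces $R/P$ Cohen--Macaulay, whence $\beta_0(P)\le 3$). If you want to salvage your route, you must at minimum replace the application of Lemma~\ref{boundcolon} in $T_1$ by such a linear-section argument and supply the $B=3$, $d=2$ case separately.
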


\begin{proof}
	Let $f_1,\dots,f_c$ be the regular sequence with $c\leq k$ and deg$(f_i)\leq d$, let $I$ be the ideal it generates. By assumption on $B$, there exists a prime sequence $G_1,\dots,G_s$ with $s\leq B$ such that $f_1,\dots,f_c\in K[G_1,\dots,G_s]$. Denote $R=K[x_1,\dots,x_N]$ and $S=K[G_1,\dots,G_s]$. Then pd$_R$$(R/I)=c\leq s$ and pd$_S$$(S/P\cap S)\leq s$. Notice that $R$ is a free and thus faithfully flat module over $S$ since we can extend $G_1,\dots,G_s$ to a maximal regular sequence $G_1,\dots,G_N$ in $R$ to get free extensions $K[G_1,\dots, G_s]\xhookrightarrow{}K[G_1,\dots, G_N]$ and $K[G_1,\dots, G_N]\xhookrightarrow{}K[x_1,\dots, x_N]$. Consequently we get pd$_R$$(R/P)\leq s$ once we have shown $P=(P\cap S)R$. By faithfully flatness $f_1,\dots,f_c\in P\cap S$ remains a regular sequence in $S$ and so $c=$ ht$\,P\geq$ ht$\,(P\cap S)R=$ ht$\,P\cap S\geq c$. Now by \cite[Corollary 2.9]{AH1}, $(P\cap S)R$ is a prime ideal, therefore $P=(P\cap S)R$.
	
	By Lemma \ref{colon}, there exists a form $f\in R$ of degree at most $cd-c$ such that $P=I:(f)$. Consider the exact sequence
	\begin{equation}\label{exact1}
		0\to R/P\;{\xrightarrow {\ f\ }}\;R/I\;{\xrightarrow {\ \ }}\;R/(I+(f))\to 0.
	\end{equation}
	
	It follows that pd$_R$$(R/(I+(f)))\leq s+1$. Then depth$\,R/(I+(f))\geq N-(s+1)$ by the Auslander-Buchsbaum formula. Let $l_{s+2},\dots,l_N\in R$ be a sequence of linear forms regular on $R/P$, $R/(I+(f))$, and $R/(f_1,\dots,f_c)$. Fix a graded isomorphism from $R/(l_{s+2},\dots,l_N)$ to $K[x_1,\dots,x_{s+1}]$, let $\overline{R}$ denote $K[x_1,\dots,x_{s+1}]$ and ``$\overline{\phantom{,,}}$'' denote the image of polynomials or ideals of $R$ in $\overline{R}$. Notice that $\beta_0(P)=\beta_0(\overline{P})$. Since $l_{s+2},\dots,l_N$ is a regular sequence on $R/(I+(f))$, the short exact sequence in (\ref{exact1}) remains exact after tensoring with $\overline{R}$. It follows that $\overline{P}=\overline{I}:(\overline{f})$. Notice that $\overline{f_1},\dots,\overline{f_c}$ is a regular sequence in $\overline{R}$, so we can apply Lemma \ref{boundcolon} to get $\beta_0(\overline{P})\leq (2d)^{2^{B-1}}$ for $B\geq 4$, or $B=3$ and $d\geq 3$.
	
	If $B=2$, it is clear that $\beta_0(P)=\beta_0(P\cap S)\leq 2 \leq (2d)^{2^1}$. If $B=3$ and $d=2$, then $P$ is a minimal prime of an ideal generated by two quadrics. Let $e(R/P)$ denote the Hilbert-Samuel multiplicity of $R/P$ with respect to the maximal ideal $(x_1,\dots,x_N)$, we have $e(R/P)\leq 4$. If either $P$ contains a linear form or $e(R/P)=4$, then $P$ is a complete intersection and so $\beta_0(P)= 2$. Otherwise $P$ contains no linear forms and $e(R/P)=3=1+\het(P)$, so we can apply \cite[Theorem 4.2]{EISENBUD198489} to see that $R/P$ is Cohen-Macaulay and hence $\beta_0(P)\leq e(R/P)=3$. In both cases $\beta_0(P) \leq 3<4^{2^2}$, hence the first inequality is proved. For the second inequality, note that $d \geq 2$ implies $2d \leq d^2$, therefore
$$(2d)^{2^{B-1}} \leq (d^2)^{2^{B-1}}=d^{2^B}.$$
\end{proof}
The above lemma immediately implies:
\begin{theorem}
Assume $\mathcal{B}_d$ satisfies $PcB_d$. If $D_d=\mathcal{F}_{5,d}(\mathcal{B}_d)$, then $D_d$ satisfies $PD_d$.    
\end{theorem}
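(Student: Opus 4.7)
The plan is to observe that this theorem is an almost immediate consequence of Lemma \ref{boundP} together with the definition of $\mathcal{F}_{5,d}$. Indeed, unwinding the definitions, $D_d(k)=\mathcal{F}_{5,d}(\mathcal{B}_d)(k)=d^{2^{\mathcal{B}_d(1,k)}}$, which is exactly the quantity $d^{2^B}$ appearing in the second (and stronger) inequality of Lemma \ref{boundP} once we set $B=\mathcal{B}_d(1,k)$. So the proof reduces to checking that the hypotheses of Lemma \ref{boundP} are met, and that $D_d$ is monotone.

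In more detail, I would take an ideal $I$ generated by a regular sequence of $c\leq k$ forms of degree at most $d$ (we may assume $d\geq 2$, as this is the standing assumption in Section 3, and the case $d=1$ is trivial since such a $P$ is then a linear ideal). Let $P$ be a minimal prime over $I$. Because $\mathcal{B}_d$ satisfies $PcB_d$, the specialization $\mathcal{B}_d(1,\cdot)\colon \mathbb{Z}_+\to \mathbb{Z}_+$ satisfies precisely the hypothesis of Lemma \ref{boundP} at $\eta=1$ with $B=\mathcal{B}_d(1,k)$. Lemma \ref{boundP} then gives
\[
\beta_0(P)\leq d^{2^{\mathcal{B}_d(1,k)}}=D_d(k),
\]
which is the bound required for $PD_d$.

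It remains to check that $D_d$ is increasing, which is part of $PD_d$. Under Settings 2.10 we work with ascending packages, so $\mathcal{B}_d(1,k)$ is non-decreasing in $k$; composing with the (strictly) increasing functions $x\mapsto 2^x$ and $x\mapsto d^x$ (using $d\geq 2$) shows $D_d$ is non-decreasing, and one may replace it by a strictly increasing upper bound without changing the argument. This completes the plan.

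I do not expect a serious obstacle: all the real work sits in Lemma \ref{boundP} (and ultimately in Lemma \ref{boundcolon}), whose proofs have already been carried out in the preceding pages using Chardin's lemma, the zero-generic initial ideal, and the Caviglia--Sbarra regularity recursion. The only mild subtlety is bookkeeping of the constant-case $B\leq 3$ and $d=2$, which Lemma \ref{boundP} has already absorbed to obtain the clean closed form $d^{2^B}$.
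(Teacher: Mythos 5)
Your proposal is correct and matches the paper's approach exactly: the paper states this theorem with the words ``The above lemma immediately implies,'' i.e.\ it is a direct application of Lemma \ref{boundP} with $B=\mathcal{B}_d(1,k)$, yielding $\beta_0(P)\leq d^{2^{B}}=\mathcal{F}_{5,d}(\mathcal{B}_d)(k)$. Your additional check that $D_d$ is increasing (via Settings 2.10 and monotonicity of $x\mapsto d^{2^x}$) is a detail the paper leaves implicit but is handled correctly.
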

Next we introduce the $F$-decomposition.
\begin{definition}[Decompositions of dimension vectors]
We fix $d \in \mathbb{Z}_+$ and a function $F:E_d=\mathbb{N}^d \to \mathbb{N}$. Let $\delta \in E_d$. If $\delta_i \neq 0$, we say a simple $F$-decomposition of $\delta$ of degree $i$ is $\delta-e_i+F(\delta)\tilde{e}_{i-1}=\delta-e_i+F(\delta)(e_{i-1}+\ldots+e_1)$. An $F$-decomposition is a composition of simple decompositions. We write $\delta \rightarrowtail \delta'$ if $\delta'$ is a decomposition of $\delta$, write $\delta \rightarrowtail\leq \delta''$ if there exists $\delta'$ such that $\delta \rightarrowtail\delta'$ and $\delta' \leq \delta''$. 
\end{definition}
\begin{lemma}
For a function $F:\mathbb{N}^d \to \mathbb{Z}_+$, define the function $B_d(\cdot)$ in the following way: for every $\delta \in \mathbb{N}^d$, there are only finitely many $F$-decompositions of $\delta$, and $B_d(\delta)=\max \{n:\delta \rightarrowtail ne_1\}$. Then $B_d$ is well-defined and uniquely determined by $F$, hence there is a functional $\mathcal{F}_{dec,d}$ such that $B_d=\mathcal{F}_{dec,d}(F)$. Moreover, for such $B_d$ we have $B_d(\delta)\geq|\delta|$.   
\end{lemma}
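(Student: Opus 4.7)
The plan is to establish three claims in turn: finiteness of the set of $F$-decompositions of $\delta$, nonemptiness of $\{n : \delta \rightarrowtail n e_1\}$, and the lower bound $B_d(\delta) \geq |\delta|$.

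First I would equip $\mathbb{N}^d$ with the lex order in which the coordinate of highest index is most significant; this is a well-founded total order (of order type $\omega^d$). A simple $F$-decomposition of degree $i$ sends $\delta$ to $\delta - e_i + F(\delta)\tilde e_{i-1}$, which leaves every coordinate $\delta_j$ with $j > i$ unchanged and strictly decreases $\delta_i$; hence the new state is strictly smaller in this order. I would then organize all $F$-decompositions of $\delta$ into a rooted tree whose nodes are the $F$-decompositions (with the empty one as the root) and whose edges extend a decomposition by one further simple step. Branching is bounded by $d$ (at most one child per index $i$ with $\delta_i \neq 0$), and every branch is finite by well-foundedness; K\"onig's lemma then gives that the tree is finite, so there are only finitely many $F$-decompositions of $\delta$.

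For well-definedness of $B_d(\delta)$ the set $\{n : \delta \rightarrowtail n e_1\}$ is finite by the above, and it is nonempty because applying simple decompositions of degree $\geq 2$ as long as a nonzero coordinate of index $\geq 2$ remains must terminate (by well-foundedness) at a vector of the form $n e_1$. The maximum therefore exists and depends only on $F$, defining the functional $\mathcal{F}_{\mathrm{dec},d}$.

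Finally, for $B_d(\delta) \geq |\delta|$ I would observe that a simple decomposition of degree $i \geq 2$ changes $|\delta|$ by $F(\delta)(i - 1) - 1$, which is nonnegative since $F(\delta) \geq 1$ and $i - 1 \geq 1$. Thus $|\cdot|$ is non-decreasing along any $F$-decomposition, so any $n e_1$ reachable from $\delta$ satisfies $n = |n e_1| \geq |\delta|$, and taking the maximum yields $B_d(\delta) \geq |\delta|$. The main obstacle is articulating the K\"onig's lemma / well-founded induction cleanly on $\mathbb{N}^d$; every other ingredient is a one-line verification.
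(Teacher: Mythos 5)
Your proof is correct and follows essentially the same route as the paper: both arguments rest on the lexicographic well-ordering of $E_d$ (most significant coordinate of highest index), the fact that each simple decomposition strictly decreases in that order, and the finite branching of the decomposition process — your K\"onig's-lemma phrasing is just the tree-shaped form of the paper's minimal-counterexample induction. You additionally spell out the verification that $|\cdot|$ is non-decreasing under simple decompositions (using $F(\delta)\geq 1$ and $i\geq 2$) to get $B_d(\delta)\geq|\delta|$, a step the paper's proof leaves implicit, which is a welcome completion rather than a deviation.
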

\begin{proof}
We consider the lexicographic order on $E$; we say $\delta >_{lex} \delta'$ if in the largest degree $d'$ where $\delta$ and $\delta'$ differ, $\delta_{d'}>\delta'_{d'}$. For each $d \geq 1$, $>_{lex}$ is a well-ordering on $E_d$, that is, every nonempty set has a minimal element. If $\delta \in E_1$, then its only decomposition is itself. If $\delta \notin E_1$, then it has at most $\deg \delta-1$ simple decompositions which are smaller under $>_{lex}$. If all these simple decompositions have finitely many decompositions, then $\delta$ itself has finitely many decompositions. Therefore, the set of dimension sequences in $E_d$ with infinitely many decompositions must be empty, otherwise it has a minimal element $\delta_0$ since $>_{lex}$ is a well order. However, that all simple decompositions of $\delta_0$ have finitely many decomposition implies that $\delta_0$ has finitely many decomposition, which is a contradiction. We see every decomposition terminates at $E_1$. Thus for every nonzero $\delta \in \mathbb{N}^d$, the set $ \{n:\delta \rightarrowtail ne_1\}$ is finite and nonempty, so $B_d(\eta,\delta)$ is well-defined.
\end{proof}
\begin{lemma}
Fix $d \geq 2$. Let $F$ be a function $F:\mathbb{N}^d \to \mathbb{Z}_+$. Consider a function $B_d:\mathbb{N}^d\to\mathbb{Z}_+$ satisfying: $B_d(ne_1)=n$, and $B_d(\delta)$ is equal to the maximum of $|\delta|$ and $B_d(\delta')$ where $\delta'$ runs through all simple 
$F(\cdot)$-decompositions of $\delta$. Then $B_d=\mathcal{F}_{dec,d}(F)$.
\end{lemma}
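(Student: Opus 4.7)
The plan is a straightforward well-founded induction on the lexicographic order $>_{lex}$ on $E_d$ that was introduced (and shown to be a well-ordering) in the proof of the previous lemma. Write $B_d$ for the function $\delta \mapsto \max\{n : \delta \rightarrowtail n e_1\} = \mathcal{F}_{dec,d}(F)(\delta)$ from the previous lemma, and write $B_d^{\star}$ for any function satisfying the recursion in the current statement; the goal is to show $B_d^{\star} = B_d$ on all of $E_d$.

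For the base case, $\delta \in E_1$ means $\delta = n e_1$ for some $n \in \mathbb{N}$. A simple $F$-decomposition requires $\delta_i \neq 0$ for some $i \geq 2$, so $n e_1$ admits no simple decompositions; its only $F$-decomposition is itself, giving $B_d(n e_1) = n = B_d^{\star}(n e_1)$. For the inductive step with $\delta \notin E_1$, every nontrivial $F$-decomposition of $\delta$ begins with a first simple step $\delta \rightarrowtail \delta'$ followed by a decomposition of $\delta'$, so
$$\{n : \delta \rightarrowtail n e_1\} \;=\; \bigcup_{\delta'} \{n : \delta' \rightarrowtail n e_1\},$$
where $\delta'$ ranges over the simple $F$-decompositions of $\delta$. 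Taking maxima and applying the inductive hypothesis (each such $\delta'$ is strictly smaller than $\delta$ in $>_{lex}$) gives $B_d(\delta) = \max_{\delta'} B_d(\delta') = \max_{\delta'} B_d^{\star}(\delta')$.

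It remains to check that this maximum already dominates $|\delta|$, so that the $|\delta|$ term in the recursion for $B_d^{\star}(\delta)$ is absorbed. Pick any $i \geq 2$ with $\delta_i \neq 0$; the corresponding simple decomposition $\delta' = \delta - e_i + F(\delta)\tilde{e}_{i-1}$ satisfies $|\delta'| = |\delta| - 1 + F(\delta)(i-1) \geq |\delta|$, since $F(\delta) \geq 1$ and $i \geq 2$. Combined with the ``moreover'' bound $B_d(\delta') \geq |\delta'|$ from the previous lemma, this yields $B_d^{\star}(\delta') = B_d(\delta') \geq |\delta|$, so the $|\delta|$ term is absorbed and $B_d^{\star}(\delta) = B_d(\delta)$, completing the induction. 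The main point requiring care is interlocking this equality with the inequality $B_d \geq |\cdot|$; if the latter is folded into the same lex induction, it follows automatically from the fact that $\delta \rightarrowtail \delta'$ forces $B_d(\delta) \geq B_d(\delta')$, so both facts drop out of a single induction.
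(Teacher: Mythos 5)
Your proposal is correct and follows essentially the same route as the paper: induction on the well-ordering $>_{lex}$, the identity $\{n:\delta \rightarrowtail ne_1\}=\bigcup_{\delta'}\{n:\delta'\rightarrowtail ne_1\}$ over first simple steps, and absorption of the $|\delta|$ term via $B_d(\delta')\geq|\delta'|\geq|\delta|$. Your explicit computation $|\delta'|=|\delta|-1+F(\delta)(i-1)\geq|\delta|$ just spells out a step the paper leaves implicit.
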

\begin{proof}
This equality is trivially true for $\delta=ne_1$. Now we use induction on $\delta$ with respect to $>=>_{lex}$ which is a well-ordering. In this case we only need to prove that this equality holds for any $\delta'<\delta$, then it holds for $\delta$. Now we assume the equality holds for any $\delta'<\delta$. We may also assume $\delta \neq |\delta|e_1$, so $\delta$ has at least one decomposition. In this case we have
$$\{n:\delta \rightarrowtail ne_1\}=\bigcup_{\delta \rightarrowtail \delta' \textup{ simply}}\{n:\delta'\rightarrowtail ne_1\} \neq \emptyset.$$
Also by induction $B_d(\delta')\geq |\delta'|\geq |\delta|$, so
\begin{align*}
B_d(\delta)=\max\{|\delta|,B_d(\delta'),\delta \rightarrowtail\delta' \textup{simply}\}\\
=\max\{B_d(\delta'),\delta \rightarrowtail\delta' \textup{simply}\}\\
=\max_{\delta \rightarrowtail\delta' \textup{simply}}\{\max\{n:\delta'\rightarrowtail ne_1\}\}\\
=\max \bigcup_{\delta \rightarrowtail \delta' \textup{ simply}}\{n:\delta'\rightarrowtail ne_1\}\\
=\max \{n:\delta \rightarrowtail ne_1\}.
\end{align*}
\end{proof}
\begin{lemma}
Let $\bar{A}_d$ be a function $\bar{A}_d(\cdot,\cdot):\mathbb{Z}_+\times\mathbb{N}^d \to \mathbb{Z}_+$. For every $\eta \in \mathbb{Z}_+$, define the function $B_d(\eta,\delta)=\mathcal{F}_{dec,d}(2\bar{A}_d(\eta,\cdot))(\delta)$. Then $B_d$ is well-defined and uniquely determined by $\bar{A}_d$, hence there is a functional $\mathcal{F}_{3,d}$ such that $B_d=\mathcal{F}_{3,d}(\bar{A}_d)$. Moreover for such $B_d$ we have $B_d(\eta,\delta)\geq|\delta|$.   
\end{lemma}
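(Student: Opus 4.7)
The plan is essentially to parametrize by $\eta$ and reduce to the previous lemma, which already established the single-parameter version. First I would fix an arbitrary $\eta \in \mathbb{Z}_+$ and consider the function $F_\eta \colon \mathbb{N}^d \to \mathbb{Z}_+$ defined by $F_\eta(\delta) = 2\bar{A}_d(\eta,\delta)$. Since $\bar{A}_d$ takes values in $\mathbb{Z}_+$, so does $F_\eta$, hence $F_\eta$ is a legitimate input to the functional $\mathcal{F}_{dec,d}$ produced by the preceding lemma. Setting $B_d(\eta,\delta) := \mathcal{F}_{dec,d}(F_\eta)(\delta)$ then defines $B_d$ as a function on the whole of $\mathbb{Z}_+ \times \mathbb{N}^d$.

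Next I would verify the three assertions in turn. For well-definedness and uniqueness, note that the preceding lemma guarantees that for each $\eta$ the function $\mathcal{F}_{dec,d}(F_\eta)$ exists and is uniquely determined by $F_\eta$; since $F_\eta$ is in turn uniquely determined by $\bar{A}_d$ (and the choice of $\eta$), the entire function $B_d$ is uniquely determined by $\bar{A}_d$. This lets me package the assignment $\bar{A}_d \mapsto B_d$ as a single functional $\mathcal{F}_{3,d}$, whose inputs and outputs are of the correct types. The inequality $B_d(\eta,\delta) \geq |\delta|$ then transfers immediately: for each fixed $\eta$, the previous lemma asserts this bound for $\mathcal{F}_{dec,d}(F_\eta)(\delta)$, which by definition equals $B_d(\eta,\delta)$.

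There is no real obstacle here, since the heavy lifting, namely termination of the decomposition process and finiteness of the set $\{n : \delta \rightarrowtail n e_1\}$, has already been done in the two preceding lemmas. The only subtlety worth remarking on is that the passage from $\bar{A}_d$ to $F_\eta$ by the factor of $2$ is irrelevant at this stage; the role of the factor $2$ will only manifest later, in Section~4, where it is presumably needed to absorb a splitting argument in the AH recurrence. For the current statement, one may verify without comment that multiplication by $2$ preserves the hypothesis $F_\eta(\delta) \in \mathbb{Z}_+$. Thus the proof reduces to a one-line invocation of the previous lemma, together with the observation that the $\eta$-parameter enters only as a harmless index.
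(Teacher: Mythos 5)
Your proposal is correct and matches the paper's (implicit) argument exactly: the paper states this lemma without proof, treating it as an immediate consequence of the preceding lemma applied, for each fixed $\eta$, to the function $F_\eta=2\bar{A}_d(\eta,\cdot):\mathbb{N}^d\to\mathbb{Z}_+$. Your verification that the factor of $2$ preserves positivity and that the $\eta$-parameter enters only as an index is all that is needed.
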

\begin{lemma}\label{3.11}
Let $\bar{A}_d$ be a function $\bar{A}_d(\cdot,\cdot):\mathbb{Z}_+\times\mathbb{N}^d \to \mathbb{Z}_+$ and $B_d=\mathcal{F}_{3,d}(\bar{A}_d)$. Then $\bar{A}_d(\cdot,\cdot)$ satisfying $PAb_d$ implies that $B_d$ satisfies $PB_d$.   
\end{lemma}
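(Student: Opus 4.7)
The plan is to induct on $\delta \in E_d = \mathbb{N}^d$ with respect to the lexicographic well-ordering $>_{lex}$ used in the proof that $\mathcal{F}_{dec,d}$ is well-defined. Fix $\eta \in \mathbb{Z}_+$ and let $V$ be a graded vector space with $\underline{\dim}V = \delta$. By the two preceding lemmas, $B_d(\eta,\delta) \geq |\delta|$, $B_d$ is ascending, and $B_d(\eta,\delta)$ dominates $B_d(\eta,\delta')$ for every simple $(2\bar{A}_d(\eta,\delta))$-decomposition $\delta'$ of $\delta$. The case split is whether the hypothesis of $PAb_d$ is satisfied by $V$ itself.

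If $\str(V_j) \geq \bar{A}_d(\eta, \delta)$ for all $1 \leq j \leq d$, then $PAb_d$ gives that any homogeneous $K$-basis of $V$ is an $R_\eta$-sequence of length $|\delta| \leq B_d(\eta, \delta)$ and of degree at most $\deg V \leq d$, so the subalgebra it generates witnesses $PB_d$. Otherwise some $V_j$ contains a form $F$ with $\str(F) < \bar{A}_d(\eta, \delta)$, hence by definition of strength $F$ admits a collapse $F = \sum_{i=1}^{k} f_i g_i$ with $k \leq \bar{A}_d(\eta, \delta)$ and all $f_i, g_i \in R^h$ of positive degree strictly less than $j$. Let $W$ be the graded $K$-subspace spanned by (a homogeneous $K$-basis of $V$ with $F$ removed) together with all the $f_i$'s and $g_i$'s. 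Then $\underline{\dim}W$ agrees with $\delta$ above degree $j$, equals $\delta_j - 1$ in degree $j$, and is at most $\delta_i + 2k \leq \delta_i + 2\bar{A}_d(\eta,\delta)$ in each degree $i < j$; in particular $\underline{\dim}W \leq \delta - e_j + 2\bar{A}_d(\eta, \delta)\tilde{e}_{j-1}$ componentwise and $\underline{\dim}W <_{lex} \delta$.

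The inductive hypothesis applied to $W$ produces an $R_\eta$-sequence of degree at most $d$ and length at most $B_d(\eta, \underline{\dim}W)$ whose generated subalgebra $S$ contains $W$. Then $S$ also contains $F = \sum f_i g_i$ and the remaining basis elements of $V$, so $S \supset V$. By the three properties of $B_d$ recalled above, this length is at most $B_d(\eta, \underline{\dim}W) \leq B_d(\eta, \delta - e_j + 2\bar{A}_d(\eta, \delta)\tilde{e}_{j-1}) \leq B_d(\eta, \delta)$, completing the induction. The main delicate point will be the bookkeeping between the collapse notion (which produces $2k$ new factors distributed however among degrees $1, \ldots, j-1$) and the simple $F$-decomposition (which uniformly adds $F(\delta)$ in each lower-degree coordinate); the factor of $2$ built into $\mathcal{F}_{3,d}(\bar{A}_d) = \mathcal{F}_{dec,d}(2\bar{A}_d)$ is exactly what absorbs the doubling from the pairs $(f_i,g_i)$, and using $\tilde{e}_{j-1}$ rather than a finer vector is the crude but sufficient bound that makes the induction on $>_{lex}$ close without tracking the degree partition of $F$.
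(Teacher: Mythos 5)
Your proof is correct and is exactly the argument the paper has in mind: the paper's own proof is only a two-line sketch deferring to the Theorem A $\to$ Theorem B step of Ananyan--Hochster (either every $V_j$ is strong enough that $PAb_d$ applies directly, or some form collapses and one replaces it by its $2k \le 2\bar{A}_d(\eta,\delta)$ factors, dropping the dimension vector in $>_{lex}$ and invoking induction). Your write-up supplies the bookkeeping the paper omits — in particular the observation that $\underline{\dim}W$ is dominated componentwise by the simple $2\bar{A}_d(\eta,\cdot)$-decomposition $\delta - e_j + 2\bar{A}_d(\eta,\delta)\tilde{e}_{j-1}$, which together with Lemma 3.9 and the standing ascending assumptions closes the induction.
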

\begin{proof}
The proof lies in the part of Section 4 of \cite{AH1} from Theorem A to Theorem B. Roughly speaking, if we have a graded vector space, then either its strength is large enough so that a $K$-basis forms an $R_\eta$-sequence, or its strength is small, so we can replace basis elements with elements of lower degree until the strength of the new set of elements is large enough. In this case the dimension vector $\delta$ is replaced with a $2\bar{A}_d$-decomposition $\delta'$, and the length of the $R_\eta$-sequence is equal to $|\delta'|$, which is smaller than or equal to $\max\{n:\delta\rightarrowtail ne_1\}$. This is just the value of $B_d=\mathcal{F}_{3,d}(\bar{A}_d)$. So, we are done.    
\end{proof}
\begin{lemma}
An initial condition $\mathcal{I}_\alpha$ satisfying the AH-condition is given by
$$A_1(\eta)=1,\bar{A}_1(\eta,ne_1)=1,B_1(\eta,ne_1)=n,\mathcal{B}_1(\eta,n)=n, D_1(k)=k.$$  
\end{lemma}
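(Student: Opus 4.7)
My plan is a direct, term-by-term verification of the five AH properties $PAa_1, PAb_1, PB_1, PcB_1, PD_1$ (the degree-$1$ instances) for the given initial data, where the first entry is to be read as $A_1(\eta)=1$. The whole argument rests on two elementary observations: every nonzero homogeneous form of degree at most $1$ in $R^h$ is a nonzero linear form (and so by Definition 2.3(1) has strength $+\infty$), and any $K$-linearly independent sequence of linear forms is a regular sequence in $R$ whose successive quotients are polynomial rings. These two facts collapse all the nontrivial-looking hypotheses at degree $1$ into tautologies.

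Working through the cases in turn: for $PAa_1$, the constraint $\str(F)\geq 1$ is automatic for any nonzero $F\in R^h$ with $\deg F\leq 1$, and after a linear change of variables $R/FR$ is a polynomial ring in $N-1$ variables, hence regular and therefore $R_\eta$ for every $\eta$; so $A_1(\eta)=1$ works. For $PAb_1$, the hypothesis $\str(V_1)\geq 1$ is again automatic, and a $K$-linearly independent sequence inside $V_1$ yields iterated polynomial-ring quotients, so it is an $R_\eta$-sequence for every $\eta$; hence $\bar{A}_1(\eta,ne_1)=1$ works. For $PB_1$ and $PcB_1$, a $K$-basis of $V$ consists of $n$ linearly independent linear forms; it is an $R_\eta$-sequence by the same argument, and $V$ is contained in its $K$-linear span, so $B_1(\eta,ne_1)=n$ and $\mathcal{B}_1(\eta,n)=n$ realize the conditions. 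For $PD_1$, an ideal $I$ generated by a regular sequence of at most $k$ linear forms has polynomial-ring quotient, so $I$ is prime and is its own unique minimal prime, generated by at most $k$ elements; so $D_1(k)=k$ works.

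Finally I would check that the initial data respects Setting 2.10 (ascending in each variable): $A_1$ is constant in $\eta$, $\bar{A}_1$ is constant in both coordinates, $B_1(\eta,ne_1)=n$ and $\mathcal{B}_1(\eta,n)=n$ are weakly increasing in $n$ and constant in $\eta$, and $D_1(k)=k$ is the identity in $k$, so every function is weakly non-decreasing in every variable. There is essentially no obstacle in the proof; the only thing worth keeping in mind is the convention $\str(F)=+\infty$ for nonzero linear $F$, which is precisely what makes the strength hypotheses trivial at $d=1$ and reduces everything to well-known behaviour of polynomial rings modulo linear forms.
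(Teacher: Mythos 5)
Your verification is correct and is exactly the routine check the paper has in mind (the paper omits the proof entirely, remarking only that the lemma "is easy to check"). Reading the first entry as $A_1(\eta)=1$ and reducing everything to the facts that nonzero linear forms have infinite strength and that linearly independent linear forms give polynomial-ring quotients is the intended argument, and your case-by-case treatment of $PAa_1,PAb_1,PB_1,PcB_1,PD_1$ together with the ascending check covers all that is needed.
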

The above lemma is easy to check. Gathering all the results above, we get:
\begin{theorem}
The recurrence relation $\mathcal{R}_\alpha$ given by $\mathcal{F}_{i,j}, 1 \leq i \leq 5, j \geq 2$ is an AH recurrence relation. The package $\mathcal{P}_\alpha$ determined by relation $\mathcal{R}_\alpha$ and initial condition $\mathcal{I}_\alpha$ is an AH package.    
\end{theorem}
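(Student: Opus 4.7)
The plan is to prove the two claims simultaneously by induction on $d$, assembling the implications already obtained in Theorem 3.2, Theorem 3.6, and the $F$-decomposition lemma giving $B_d = \mathcal{F}_{3,d}(\bar{A}_d) \in PB_d$ from $\bar{A}_d \in PAb_d$. Since an AH recurrence relation is by definition one that, together with \emph{any} compatible initial condition satisfying $PAa_d, PAb_d, PB_d, PcB_d, PD_d$ for small $d$, determines an AH package, it suffices to fix such an initial condition and show the propagation at each higher $d$. Once the general propagation is established, the second claim reduces to checking that $\mathcal{I}_\alpha$ really does satisfy the five properties at $d=1$.

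For the base case I would verify $\mathcal{I}_\alpha$ directly. The assertions $\bar{A}_1(\eta,ne_1)=1$ and $\mathcal{B}_1(\eta,n)=n$ reduce to the observation that any $K$-linearly independent set of forms of degree one in $R$ is already a regular sequence of linear forms and hence an $R_\eta$-sequence for every $\eta$, so the corresponding $K$-subalgebra is $K[V_1]$ itself. The value $B_1(\eta,ne_1)=n$ then records that $V_1$ lies in the algebra generated by an $R_\eta$-sequence of length $\dim V_1$. Finally $D_1(k)=k$ is justified since a regular sequence of linear forms generates a prime ideal, so its unique minimal prime coincides with the ideal itself and needs only $k$ generators. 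The value $A_d(\eta)=1$ at $d=1$ is vacuous because $PAa$ only concerns elements of degree $d\geq 2$ in the definitions.

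For the inductive step at $d\geq 2$, assume the five functions at degree $d-1$ satisfy $PAa_{d-1}, PAb_{d-1}, PB_{d-1}, PcB_{d-1}, PD_{d-1}$, and run the recurrence in order:
\begin{enumerate}
\item $D_{d-1} \in PD_{d-1}$ and $\mathcal{B}_{d-1} \in PcB_{d-1}$ give $A_d = \mathcal{F}_{1,d}(D_{d-1},\mathcal{B}_{d-1}) \in PAa_d$ by Theorem 3.2(1);
\item $A_d \in PAa_d$ gives $\bar{A}_d = \mathcal{F}_{2,d}(A_d) \in PAb_d$ by Theorem 3.2(2);
\item $\bar{A}_d \in PAb_d$ gives $B_d = \mathcal{F}_{3,d}(\bar{A}_d) \in PB_d$ by the lemma on $F$-decompositions;
\item $B_d \in PB_d$ gives $\mathcal{B}_d = \mathcal{F}_{4,d}(B_d) \in PcB_d$ by Theorem 3.2(3);
\item $\mathcal{B}_d \in PcB_d$ gives $D_d = \mathcal{F}_{5,d}(\mathcal{B}_d) \in PD_d$ by Theorem 3.6.
\end{enumerate}
This closes the induction and proves $\mathcal{R}_\alpha$ is AH; applying it to $\mathcal{I}_\alpha$ produces the AH package $\mathcal{P}_\alpha$.

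The only non-routine checks are the book-keeping ones required by Settings 2.10: each $\mathcal{F}_{i,d}$ must be ascending and positive so that the inductive hypotheses on monotonicity propagate, and this in turn lets Proposition 2.9 ensure that the package $\mathcal{P}_\alpha$ is ascending. All four explicit functionals are visibly ascending and positive from their formulas ($\mathcal{F}_{1,d}$ composes ascending functions; $\mathcal{F}_{2,d}$ is a sum of $|\delta|-1$ and a composition with $A_d$; $\mathcal{F}_{4,d}$ restricts to the diagonal $n\tilde{e}_d$; $\mathcal{F}_{5,d}$ is a double exponential in $\mathcal{B}_d(1,k)$), and for $\mathcal{F}_{3,d}$ the $F$-decomposition construction is monotone in $\bar{A}_d$ because enlarging $\bar{A}_d$ enlarges every simple decomposition $\delta-e_i+F(\delta)\tilde{e}_{i-1}$ pointwise. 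Thus the only real work has already been discharged by the prior lemmas and Theorem 3.2 and Theorem 3.6, and there is no remaining obstacle beyond collating these statements.
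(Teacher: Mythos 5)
Your proposal is correct and matches the paper's intent: the paper states this theorem with no written proof beyond ``gathering all the results above,'' and the argument it is gathering is exactly your induction on $d$ chaining Theorem 3.2 (for $\mathcal{F}_{1,d},\mathcal{F}_{2,d},\mathcal{F}_{4,d}$), the $F$-decomposition lemma (for $\mathcal{F}_{3,d}$), and Theorem 3.6 (for $\mathcal{F}_{5,d}$), together with the easy verification of $\mathcal{I}_\alpha$ and the ascending/positive bookkeeping of Settings 2.10. The only nitpick is your remark that $PAa$ at $d=1$ is ``vacuous'': it is not vacuous but trivially satisfied, since for a linear form $F$ the quotient $R/FR$ is regular and hence $R_\eta$ for every $\eta$.
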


\section{Reduction Steps}
In this section, we will bound the package $\mathcal{P}_\alpha$ using other packages.

\subsection{Reduced AH recurrence relations}
In general, if the functionals in a recurrence relation depend on very few inputs, we can compose the functionals. This gives us a recurrence relation of smaller size and packages of smaller size, which is more convenient to use. For example, the recurrence relation $\mathcal{R}_\alpha$ looks like:
\begin{center}
\begin{tikzcd}[column sep=small,row sep=large]
& & &\mathcal{B}_{d-1}\in PcB_{d-1}\arrow[r,blue,"\mathcal{F}_{5,d-1}",Rightarrow]\arrow[llld,blue,"\mathcal{F}_{1,d}"',Rightarrow]&D_{d-1} \in PD_{d-1}\arrow[lllld,blue,"\mathcal{F}_{1,d}",Rightarrow]\\
A_d \in PAa_d \arrow[r,blue,"\mathcal{F}_{2,d}"',Rightarrow] & \bar{A}_d \in PAb_d\arrow[r,red,"\mathcal{F}_{3,d}",Rightarrow]& B_d \in PB_d\arrow[r,blue,"\mathcal{F}_{4,d}",Rightarrow]&\mathcal{B}_d\in PcB_d\arrow[r,blue,"\mathcal{F}_{5,d}",Rightarrow]\arrow[llld,blue,"\mathcal{F}_{1,d+1}"',Rightarrow]&D_d \in PD_d\arrow[lllld,blue,"\mathcal{F}_{1,d+1}",Rightarrow]\\
A_{d+1} \in PAa_{d+1} & & & &
\end{tikzcd}    
\end{center}

Among these functionals, only $\mathcal{F}_{3,d}$ is not explicit using elementary operations like addition, multiplication, and taking exponents. Thus we only need to look at two sequences of functions $\bar{A}_d$ and $B_d$, and compose all the blue functionals in the above diagram. To be precise, for $d \geq 2$, we consider the recurrence relation $\mathcal{R}_\beta$ given by 
$$B_d=\mathcal{G}_{1,d}(\bar{A}_d), \bar{A}_{d+1}=\mathcal{G}_{2,d}(B_d),$$
where
$$\mathcal{G}_{1,d}=\mathcal{F}_{3,d},\mathcal{G}_{2,d}=\mathcal{F}_{2,d+1}\mathcal{F}_{1,d+1}(\mathcal{F}_{5,d}\mathcal{F}_{4,d},\mathcal{F}_{4,d}).$$
The diagram looks like:
\begin{center}
\begin{tikzcd}
\bar{A}_d \arrow[r,"\mathcal{G}_{1,d}",Rightarrow] & B_d\arrow[ld,"\mathcal{G}_{2,d}",Rightarrow]\\
\bar{A}_{d+1} \arrow[r,"\mathcal{G}_{1,d+1}",Rightarrow] & B_{d+1}
\end{tikzcd}    
\end{center}
An initial condition can be derived from $\mathcal{I}_\alpha$. That is, we set
\begin{align*}
\mathcal{B}_1(\eta,n)=n,D_1(k)=k\\
A_2(k)=\mathcal{F}_{1,2}(\mathcal{B}_1,D_1)(k)=\mathcal{B}_1(3,D_1(k+1))+1=k+2\\
\bar{A}_2(\eta,\delta)=\mathcal{F}_{2,2}(A_2)(\eta,\delta)=|\delta|-1+A_2(\eta+3|\delta|)=\eta+4|\delta|+1.
\end{align*}
We will call the condition $\bar{A}_2(\eta,\delta)=\eta+4|\delta|+1$ the initial condition $\mathcal{I}_\beta$.

The following theorem is automatic by definition.
\begin{theorem}
Let $\mathcal{P}_\beta$ be a 2-package of functions $\bar{A}_d,B_d,d \geq 2$ where $\bar{A}_d:\mathbb{Z}_+ \times \mathbb{N}^d \to \mathbb{Z}_+$, $B_d:\mathbb{Z}_+ \times \mathbb{N}^d \to \mathbb{Z}_+$ given by initial condition $\mathcal{I}_\beta$ and recurrence relation $\mathcal{R}_\beta$. Then the functions in the 2-package $\mathcal{P}_\beta$ also appear in the 5-package $\mathcal{P}_\alpha$, so $B_d$ satisfies $PB_d$ and $\bar{A}_d$ satisfies $PAb_d$.   
\end{theorem}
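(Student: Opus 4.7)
The plan is to show that the functions $\bar{A}_d$ and $B_d$ of the 2-package $\mathcal{P}_\beta$ coincide pointwise with the like-named functions in the 5-package $\mathcal{P}_\alpha$; once this is done, the properties $PAb_d$ and $PB_d$ transfer immediately from the AH-package status of $\mathcal{P}_\alpha$ proved in the final theorem of the preceding section.

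First I would unwind the functional $\mathcal{G}_{2,d}$ step by step, tracing the data flow $B_d \xrightarrow{\mathcal{F}_{4,d}} \mathcal{B}_d \xrightarrow{\mathcal{F}_{5,d}} D_d$ and then $(\mathcal{B}_d, D_d) \xrightarrow{\mathcal{F}_{1,d+1}} A_{d+1} \xrightarrow{\mathcal{F}_{2,d+1}} \bar{A}_{d+1}$. The entire composite depends only on $B_d$ (since $\mathcal{B}_d$ and $D_d$ are themselves forced by $B_d$ through $\mathcal{F}_{4,d}$ and $\mathcal{F}_{5,d}$), which justifies regarding $\mathcal{G}_{2,d}$ as a single functional on $B_d$ alone; similarly $\mathcal{G}_{1,d}=\mathcal{F}_{3,d}$ is literally taken from $\mathcal{R}_\alpha$ without change. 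Next I would verify the initial data match at $d=2$: starting from $\mathcal{I}_\alpha$ the values $\mathcal{B}_1(\eta,n)=n$ and $D_1(k)=k$ propagate through $\mathcal{F}_{1,2}$ to $A_2(k)=k+2$ and then through $\mathcal{F}_{2,2}$ to $\bar{A}_2(\eta,\delta)=|\delta|-1+A_2(\eta+3|\delta|)$, which is the closed form declared as $\mathcal{I}_\beta$. An easy induction on $d$ using the identical recurrences then gives $\bar{A}_d^{\mathcal{P}_\beta}=\bar{A}_d^{\mathcal{P}_\alpha}$ and $B_d^{\mathcal{P}_\beta}=B_d^{\mathcal{P}_\alpha}$ for every $d\geq 2$.

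Since no real mathematical content beyond bookkeeping is involved, there is no genuine obstacle. The one point requiring attention is the compact notation for $\mathcal{G}_{2,d}$, which hides the final composition with $\mathcal{F}_{2,d+1}$: one must read it as $\mathcal{F}_{2,d+1}\circ\mathcal{F}_{1,d+1}(\mathcal{F}_{5,d}\circ\mathcal{F}_{4,d},\mathcal{F}_{4,d})$, i.e.\ the full path from $B_d$ to $\bar{A}_{d+1}$, otherwise the output would be of the wrong type. Once this parsing is fixed, the theorem is immediate from the AH status of $\mathcal{P}_\alpha$.
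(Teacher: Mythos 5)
Your proposal is correct and matches the paper's intent exactly: the paper offers no written proof beyond the remark that the theorem is ``automatic by definition,'' and your unwinding of $\mathcal{G}_{1,d}$ and $\mathcal{G}_{2,d}$ as compositions of the $\mathcal{F}_{i,j}$ from $\mathcal{R}_\alpha$, together with the check that $\mathcal{I}_\beta$ is what $\mathcal{I}_\alpha$ propagates to, is precisely the bookkeeping being left implicit. Your observation that $\mathcal{G}_{2,d}$ must be read as including the final composition with $\mathcal{F}_{2,d+1}$ (as confirmed by the explicit formula for $\bar{A}_{d+1}$ given right after the theorem) is the correct parsing.
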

Here $B_d(\cdot,\cdot)=\mathcal{G}_{1,d}(\bar{A}_d)=\max\{n:\delta \rightarrowtail ne_1\}$ where $\rightarrowtail$ refers to $2\bar{A}_d(\eta,\cdot)$-decomposition. For each fixed $\eta$, $B_d(\eta,\delta)$ only depends on $\bar{A}_d(\eta,\cdot)$. $\mathcal{G}_{2,d}$ is explicit and computable, given by:
$$\bar{A}_{d+1}(\eta,\delta)=\mathcal{G}_{2,d}(B_d)=|\delta|+B_d(3,d^{2^{B_d(1,(\eta+3|\delta|+1)\tilde{e}_d)}}\tilde{e}_d).$$
In particular, we have
$$\bar{A}_{d+1}(\eta,\delta)\leq|\delta|+B_d(3,d^{2^{B_d(3,(\eta+3|\delta|+1)\tilde{e}_d)}}\tilde{e}_d)$$
because $B_d$ is ascending.

\subsection{Restricted to $\eta=3$ case}
We notice that the induction process can be restricted to the case where $\eta=3$. We define the functor $\mathcal{G}_{3,d}$ and $\mathcal{G}_{4,d}$ as follows. If $\bar{A}_d:\mathbb{N}^d \to \mathbb{Z}_+$, then
$$\mathcal{G}_{3,d}(\bar{A}_d):\delta \mapsto \max\{n: \delta \rightarrowtail ne_1\}$$
where $\rightarrowtail$ refers to $2\bar{A}_d$-decomposition, so $\mathcal{G}_{3,d}(\bar{A}_d)$ is also a function $\mathbb{N}^d \to \mathbb{Z}_+$. That is, $\mathcal{G}_{3,d}=\mathcal{F}_{dec,d}(2\cdot)$. Now let $B_d:\mathbb{N}^d \to \mathbb{Z}_+$, define
$$\mathcal{G}_{4,d}(B_d):\delta \mapsto |\delta|+B_d(d^{2^{B_d((3|\delta|+4)\tilde{e}_d)}}\tilde{e}_d).$$
$\mathcal{G}_{4,d}(B_d)$ is also a function $\mathbb{N}^d \to \mathbb{Z}_+$.
Thus we have:
\begin{theorem}
Let $\mathcal{P}_\gamma$ be a 2-package of functions $\bar{A}_d,B_d,d \geq 2$ where $\bar{A}_d:\mathbb{N}^d \to \mathbb{Z}_+$, $B_d:\mathbb{N}^d \to \mathbb{Z}_+$ given by initial conditions $\mathcal{I}_\gamma$: $\bar{A}_2(\delta)=4|\delta|+4$ and recurrence relation $\mathcal{R}_\gamma$: $B_d(\delta)=\mathcal{G}_{3,d}(\bar{A}_d)$ and $\bar{A}_{d+1}=\mathcal{G}_{4,d}(B_d)$. Then for any $d$ we have $B_{\mathcal{P}_\gamma,d}(\delta) \geq B_{\mathcal{P}_\beta,d}(3,\delta)$ and $\bar{A}_{\mathcal{P}_\gamma,d}(\delta) \geq \bar{A}_{\mathcal{P}_\beta,d}(3,\delta)$. In particular, the functions $\bar{A}_d$ and $B_d$ satisfy $PAb_d$ and $PB_d$ respectively for $\eta=3$. 
\end{theorem}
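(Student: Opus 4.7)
The plan is to prove both inequalities $\bar{A}_{\mathcal{P}_\gamma,d}(\delta) \geq \bar{A}_{\mathcal{P}_\beta,d}(3,\delta)$ and $B_{\mathcal{P}_\gamma,d}(\delta) \geq B_{\mathcal{P}_\beta,d}(3,\delta)$ simultaneously by induction on $d \geq 2$, and then to deduce the ``in particular'' statement from the observation (already used in the proof that larger recurrence relations still give AH packages) that the properties $PAb_d$ and $PB_d$ persist when a witness function is replaced by a pointwise-larger function of the same type.

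For the base case $d = 2$, the formula $\bar{A}_{\mathcal{P}_\beta,2}(\eta,\delta) = \eta + 4|\delta| - 1$ computed in Subsection 4.1 gives $\bar{A}_{\mathcal{P}_\beta,2}(3,\delta) = 4|\delta| + 2 = \bar{A}_{\mathcal{P}_\gamma,2}(\delta)$, and both $B$'s are then obtained by applying the same functional $\mathcal{F}_{dec,2}(2\,\cdot)$ to equal inputs.

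The main technical obstacle is the inductive step for $B$, which I would reduce to a monotonicity statement for the decomposition functional: if $F,F'\colon \mathbb{N}^d \to \mathbb{Z}_+$ are ascending and $F \leq F'$ pointwise, then $\mathcal{F}_{dec,d}(F) \leq \mathcal{F}_{dec,d}(F')$ pointwise. The proof is by induction on $\delta$ along the well-order $>_{lex}$ of Lemma 3.5, using the recursive characterization of Lemma 3.6: for $\delta = ne_1$ both sides equal $n$; otherwise, writing $\delta'_{i,F} = \delta - e_i + F(\delta)\tilde{e}_{i-1}$ for each $i$ with $\delta_i \neq 0$, one has $\delta'_{i,F} \leq \delta'_{i,F'}$ pointwise and both are strictly $<_{lex} \delta$, whence
\begin{equation*}
\mathcal{F}_{dec,d}(F')(\delta'_{i,F'}) \;\geq\; \mathcal{F}_{dec,d}(F)(\delta'_{i,F'}) \;\geq\; \mathcal{F}_{dec,d}(F)(\delta'_{i,F}),
\end{equation*}
the first inequality by the lex-induction and the second by the ascending property of $\mathcal{F}_{dec,d}(F)$ guaranteed by Settings 2.10 together with Proposition 2.9; taking maxima over $i$ and comparing with the $|\delta|$ term closes the monotonicity induction. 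Applying this with $F = 2\bar{A}_{\mathcal{P}_\beta,d}(3,\cdot) \leq 2\bar{A}_{\mathcal{P}_\gamma,d} = F'$, which is the outer inductive hypothesis at stage $d$, yields $B_{\mathcal{P}_\gamma,d} \geq B_{\mathcal{P}_\beta,d}(3,\cdot)$.

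The inductive step for $\bar{A}$ is then a direct comparison of the two explicit formulas recorded in Subsection 4.1: the definition gives
\begin{equation*}
\mathcal{G}_{4,d}(B_{\mathcal{P}_\gamma,d})(\delta) \;=\; |\delta| + B_{\mathcal{P}_\gamma,d}\!\left(d^{2^{B_{\mathcal{P}_\gamma,d}((3|\delta|+4)\tilde{e}_d)}}\tilde{e}_d\right),
\end{equation*}
while at $\eta = 3$ one has the upper bound
\begin{equation*}
\bar{A}_{\mathcal{P}_\beta,d+1}(3,\delta) \;\leq\; |\delta| + B_{\mathcal{P}_\beta,d}\!\left(3,\, d^{2^{B_{\mathcal{P}_\beta,d}(3,(3|\delta|+4)\tilde{e}_d)}}\tilde{e}_d\right).
\end{equation*}
Applying the just-established $B$-inequality at the outer occurrence, then the ascending property of $B_{\mathcal{P}_\gamma,d}$ to replace the inner exponent, and finally the $B$-inequality once more at the inner occurrence yields the desired bound. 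The ``in particular'' conclusion follows because $\bar{A}_{\mathcal{P}_\beta,d}$ and $B_{\mathcal{P}_\beta,d}$ satisfy $PAb_d$ and $PB_d$ by the preceding theorem for $\mathcal{P}_\beta$, so their pointwise upper bounds $\bar{A}_{\mathcal{P}_\gamma,d}$ and $B_{\mathcal{P}_\gamma,d}$ continue to satisfy these properties at the restricted value $\eta = 3$.
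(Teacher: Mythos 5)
Your proposal is correct, and it follows the route the paper intends: the theorem is stated in the paper with no proof at all (it appears after ``Thus we have''), the implicit argument being a package comparison by induction on $d$ in the spirit of Proposition 2.8, restricted to $\eta=3$. Your write-up supplies exactly the details the paper suppresses. In particular, the one genuinely nontrivial ingredient --- that the decomposition functional $\mathcal{F}_{dec,d}$ is monotone in its input function $F$, proved by induction along $>_{lex}$ via the recursive characterization of Lemma 3.6 together with the ascending property of $\mathcal{F}_{dec,d}(F)$ --- is needed but never stated in the paper, and your argument for it is sound. The base case ($\eta+4|\delta|-1$ at $\eta=3$ equals $4|\delta|+2$), the $\bar{A}$-step using the displayed bound $\bar{A}_{\mathcal{P}_\beta,d+1}(\eta,\delta)\leq|\delta|+B_d(3,d^{2^{B_d(3,(\eta+3|\delta|+1)\tilde{e}_d)}}\tilde{e}_d)$ specialized to $\eta=3$, and the persistence of $PAb_d$ and $PB_d$ under pointwise enlargement are all handled correctly.
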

\subsection{Bounding $F$-decompositions}
We fix $d \geq 1$ and an ascending positive function $F:E_d=\mathbb{N}^d \to \mathbb{Z}_+$. In this subsection, we consider bounds on the decompositions of a dimension vector $\delta \in E_d$, and build a bound as a functional of $F$.

Let $\sigma \in \mathbb{Z}_+$. We start with dimension vector $\delta=(\sigma_d,\ldots,\sigma_1)=\sum_{1 \leq i \leq d}\sigma_ie_i$ for $\sigma_i \leq \sigma$ for all $1 \leq i \leq d$.

If we start with only simple decompositions in degree 2, then the first step is:
$$(\sigma_d,\ldots,\sigma_1)\rightarrowtail (\sigma_d,\ldots,\sigma_3,\sigma_2-1,\sigma_1+F(\sigma_d,\ldots,\sigma_2,\sigma_1)).$$
The second step is
\begin{align*}
(\sigma_d,\ldots,\sigma_2-1,\sigma_1+F(\sigma_d,\ldots,\sigma_2,\sigma_1))\\
\rightarrowtail (\sigma_d,\ldots,\sigma_2-2,\sigma_1+F(\sigma_d,\ldots,\sigma_2,\sigma_1)+F((\sigma_d,\ldots,\sigma_2-1,\sigma_1+F(\sigma_d,\ldots,\sigma_2,\sigma_1)))).
\end{align*}
The third step is
\begin{align*}
\rightarrowtail (\sigma_d,\ldots,\sigma_2-3,\sigma_1+F(\sigma_d,\ldots,\sigma_2,\sigma_1)+F((\sigma_d,\ldots,\sigma_2-1,\sigma_1+F(\sigma_d,\ldots,\sigma_2,\sigma_1)))+\\
F(\sigma_d,\ldots,\sigma_2-2,\sigma_1+F(\sigma_d,\ldots,\sigma_2,\sigma_1)+F((\sigma_d,\ldots,\sigma_2-1,\sigma_1+F(\sigma_d,\ldots,\sigma_2,\sigma_1))))),
\end{align*}
and so on. Since $F$ is ascending, after replacing every element $\sigma_2-i$ under $F$ by $\sigma$ and $\sigma_i,i \geq 3$ by $\sigma$, the corresponding vector will be larger. Thus, we denote 
$$\prescript{d}{}{\psi_{1,\sigma}}(\epsilon)=\epsilon+F(\sigma,\ldots,\sigma,\epsilon).$$ 
Since $F>0$, $\prescript{d}{}{\psi_{1,\sigma}}(\epsilon)>\epsilon$. In particular, $\prescript{d}{}{\psi^n_{1,\sigma}}(\epsilon) \geq \epsilon+n$. Then the vectors in the above steps are smaller than the following vectors:
\begin{align*}
(\sigma_d,\ldots,\sigma_2,\sigma_1) \rightarrowtail\leq (\sigma_d,\ldots,\sigma_2-1,\prescript{d}{}{\psi_{1,\sigma}}(\sigma_1))\\
\rightarrowtail\leq (\sigma_d,\ldots,\sigma_2-2,\prescript{d}{}{\psi^2_{1,\sigma}}(\sigma_1)) \rightarrowtail\leq (\sigma_d,\ldots,\sigma_2-3,\prescript{d}{}{\psi^3_{1,\sigma}}(\sigma_1))\\
\ldots \rightarrowtail\leq (\sigma_d,\ldots,\sigma_3,0,\prescript{d}{}{\psi^{\sigma_2}_{1,\sigma}}(\sigma_1)) \rightarrowtail\leq (\sigma_d,\ldots,\sigma_3,0,\prescript{d}{}{\psi^{\sigma}_{1,\sigma}}(\sigma)).
\end{align*}
In this process we see that the $e_1$-coordinate of the dimension vector increases and the $e_2$-coordinate decreases. Also, $\sigma_2\leq \sigma<\prescript{d}{}{\psi^{\sigma}_{1,\sigma}}(\sigma).$ Thus if $(\sigma_d,\ldots,\sigma_1) \rightarrowtail \delta'$ by decompositions in degree 2 and $(\sigma_d,\ldots,\sigma_1) \leq \sigma\tilde{e}_d$, then $\delta' \leq (\sigma_d,\ldots,\sigma_3,\prescript{d}{}{\psi^{\sigma}_{1,\sigma}}(\sigma),\prescript{d}{}{\psi^{\sigma}_{1,\sigma}}(\sigma))$.

Now assume there is a decomposition in degree 3. We consider two cases. The first case: this is the first decomposition of $\delta$. Then 
\begin{equation*}
(\sigma_d,\ldots,\sigma_1)\rightarrowtail (\sigma_d,\ldots,\sigma_3-1,\sigma_2+F(\sigma_d,\ldots,\sigma_1),\sigma_1+F(\sigma_d,\ldots,\sigma_1)). \tag{*}    
\end{equation*}
The second case: there are some decompositions in degree 2 before this decomposition. No matter what decompositions they are, the resulting vector is always smaller than or equal to $(\sigma_d,\ldots,\sigma_3,\prescript{d}{}{\psi^{\sigma}_{1,\sigma}}(\sigma),\prescript{d}{}{\psi^{\sigma}_{1,\sigma}}(\sigma))$, and then we are forced to decompose it in degree 3. Therefore, we get
\begin{align*}
(\sigma_d,\ldots,\sigma_1)\rightarrowtail\leq \\
\sigma_de_d+\ldots+(\sigma_3-1)e_3+(\prescript{d}{}{\psi^{\sigma}_{1,\sigma}}(\sigma)+F(\sigma_d,\ldots,\sigma_3,\prescript{d}{}{\psi^{\sigma}_{1,\sigma}}(\sigma),\prescript{d}{}{\psi^{\sigma}_{1,\sigma}}(\sigma)))(e_2+e_1).  \tag{**} 
\end{align*}
The vector on the second line of $(**)$ is always bigger than the right half of $(*)$. So we can just take the second line of $(**)$ as an upper bound for the decomposition in degree 3. Then we can iterate it again. In each iteration, we replace all $\sigma_3-i$ under $F$ by $\sigma$, and all $\sigma_i,i \geq 4$ by $\sigma$. We denote
$$\prescript{d}{}{\psi_{2,\sigma}}(\epsilon)=\prescript{d}{}{\psi^{\epsilon}_{1,\epsilon}}(\epsilon)+F(\sigma,\ldots,\sigma,\prescript{d}{}{\psi^{\epsilon}_{1,\epsilon}}(\epsilon),\prescript{d}{}{\psi^{\epsilon}_{1,\epsilon}}(\epsilon)).$$
Then
\begin{align*}
(\sigma_d,\ldots,\sigma_2,\sigma_1) \rightarrowtail\leq (\sigma_d,\ldots,\sigma_3-1,\prescript{d}{}{\psi_{2,\sigma}}(\sigma),\prescript{d}{}{\psi_{2,\sigma}}(\sigma))\\
\rightarrowtail\leq (\sigma_d,\ldots,\sigma_3-2,\prescript{d}{}{\psi^2_{2,\sigma}}(\sigma),\prescript{d}{}{\psi^2_{2,\sigma}}(\sigma)) \rightarrowtail\leq \ldots\\
\rightarrowtail\leq (\sigma_d,\ldots,\sigma_4,0,\prescript{d}{}{\psi^{\sigma_3}_{2,\sigma}}(\sigma),\prescript{d}{}{\psi^{\sigma_3}_{2,\sigma}}(\sigma)) \rightarrowtail\leq (\sigma_d,\ldots,\sigma_4,0,\prescript{d}{}{\psi^{\sigma}_{2,\sigma}}(\sigma),\prescript{d}{}{\psi^{\sigma}_{2,\sigma}}(\sigma)). 
\end{align*}
Here in the second step we can run the same argument because $\sigma<\prescript{d}{}{\psi_{2,\sigma}}(\sigma)$, so we can replace $\sigma$ by $\prescript{d}{}{\psi_{2,\sigma}}(\sigma)$ and since $\sigma_i<\sigma<\prescript{d}{}{\psi_{2,\sigma}}(\sigma)$ for all $i$, we can apply the result for decompositions in dimension 2.

Therefore, for $1 \leq i \leq d-1$, we can define $\prescript{d}{}{\psi_{i,\sigma}}(\epsilon)$ recursively from $F$:
$$\prescript{d}{}{\psi_{i,\sigma}}(\epsilon)=\prescript{d}{}{\psi^{\epsilon}_{i-1,\epsilon}}(\epsilon)+F(\sigma(\tilde{e}_d-\tilde{e}_i)+\prescript{d}{}{\psi^{\epsilon}_{i-1,\epsilon}}(\epsilon)\tilde{e}_i),$$

then
$$(\sigma_d,\ldots,\sigma_2,\sigma_1)\rightarrowtail\leq(\sigma_d,\ldots,\sigma_{i+2},0,\prescript{d}{}{\psi^{\sigma}_{i,\sigma}}(\sigma),\ldots,\prescript{d}{}{\psi^{\sigma}_{i,\sigma}}(\sigma)).$$
In particular, for $i=d-1$,
$$(\sigma_d,\ldots,\sigma_2,\sigma_1)\rightarrowtail\leq(0,\prescript{d}{}{\psi^{\sigma}_{d-1,\sigma}}(\sigma),\ldots,\prescript{d}{}{\psi^{\sigma}_{d-1,\sigma}}(\sigma)).$$
Let $\prescript{d}{}{\phi_i}(\sigma)=\prescript{d}{}{\psi^{\sigma}_{i,\sigma}}(\sigma)$. Then
\begin{align*}
(\sigma_d,\ldots,\sigma_2,\sigma_1)\\
\rightarrowtail\leq (0,\prescript{d}{}{\phi_{d-1}}(\sigma),\ldots,\prescript{d}{}{\phi_{d-1}}(\sigma))\\
\rightarrowtail\leq (0,0,\prescript{d}{}{\phi_{d-2}}\prescript{d}{}{\phi_{d-1}}(\sigma),\ldots,\prescript{d}{}{\phi_{d-2}}\prescript{d}{}{\phi_{d-1}}(\sigma))\\
\rightarrowtail\leq\ldots\rightarrowtail\leq (0,\ldots,0,\prescript{d}{}{\phi_1}\ldots\prescript{d}{}{\phi_{d-1}}(\sigma)).
\end{align*}
\begin{definition}
The functional from $F:E_d \to \mathbb{Z}_+$ to the function $\prescript{d}{}{\phi_1}\ldots\prescript{d}{}{\phi_{d-1}}:\sigma \to \prescript{d}{}{\phi_1}\ldots\prescript{d}{}{\phi_{d-1}}(\sigma)$ makes sense, so we denote it by $\mathcal{G}_{5,d}$. Its output is a function $\mathbb{Z}_+ \to \mathbb{Z}_+$. 
\end{definition}
The arguments above tell us 
$$\mathcal{G}_{5,d}(F)(\sigma) \geq \mathcal{F}_{dec,d}(F)(\sigma\tilde{e}_d).$$
\begin{remark}
The notations $\prescript{d}{}{\psi}_{i,\sigma}(\cdot)$ and $\prescript{d}{}{\phi}_i(\cdot)$ depend on the function $F:E_d \to \mathbb{Z}_+$. We will point out $F$ when we use these notations.    
\end{remark}
It is easy to see the functional $\mathcal{G}_{5,d}$ is ascending in terms of $F$. The above arguments in Subsection 4.3 imply:
\begin{proposition}
If $B_d=\mathcal{G}_{3,d}(F)$, then $B_d(\sigma\tilde{e}_d)\leq \mathcal{G}_{5,d}(2F)(\sigma)$. In particular, if $B_d=\mathcal{G}_{3,d}(\bar{A}_d)$, then $B_d(\sigma\tilde{e}_d)\leq \mathcal{G}_{5,d}(2\bar{A}_d)(\sigma)$. 
\end{proposition}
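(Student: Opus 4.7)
The plan is to formalize the explicit iteration sketched in the paragraphs immediately preceding the statement; setting $F' := 2F$ throughout, the argument there applies verbatim. The core technical content is an induction on the maximum degree $i$ of simple $F'$-decompositions allowed in a chain.

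Concretely, I will prove by induction on $i \in \{1,\dots,d-1\}$ the following uniform statement: for any dimension vector $\delta \leq \sigma\tilde{e}_d$, any chain of simple $F'$-decompositions of $\delta$ using only degrees $\leq i+1$ terminates at a vector which is coordinatewise $\leq \sigma(\tilde{e}_d - \tilde{e}_{i+1}) + \prescript{d}{}{\phi_i}(\sigma)\tilde{e}_i$, where $\prescript{d}{}{\phi_i}$ is built from $F'$ as in the text. For $i=1$, each simple degree-$2$ step decreases the $e_2$-coordinate by one and increases the $e_1$-coordinate by $F'$ evaluated on the current vector; using that $F'$ is ascending, I may majorize the higher-degree coordinates by $\sigma$ and the current $e_2$-coordinate by $\sigma$, replacing the multivariate iteration by the one-variable map $\epsilon \mapsto \prescript{d}{}{\psi_{1,\sigma}}(\epsilon) = \epsilon + F'(\sigma,\ldots,\sigma,\epsilon)$. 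After at most $\sigma$ iterations starting from $\sigma$, the $e_1$-coordinate is bounded by $\prescript{d}{}{\phi_1}(\sigma)$.

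For the inductive step, each simple degree-$(i+1)$ step strictly decreases the $e_{i+1}$-coordinate and so can occur at most $\sigma$ times in any chain; between two consecutive such steps only decompositions of degree $\leq i$ occur, so the inductive hypothesis bounds the lower coordinates by $\prescript{d}{}{\phi_i}$ applied to the current value. The definition of $\prescript{d}{}{\psi_{i+1,\sigma}}$, and in turn $\prescript{d}{}{\phi_{i+1}}$, packages exactly this two-level iteration into a single one-variable map; ascending-ness of $F'$ again justifies majorizing all non-shrinking intermediate coordinates by $\sigma$ when invoking $F'$.

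Applying this for $i = d-1$ to the starting vector $\sigma\tilde{e}_d$, followed by successive reductions in lower degrees (which are degenerate instances of the same induction), shows that the terminal vector of any chain of $2F$-decompositions of $\sigma\tilde{e}_d$ is $\leq \prescript{d}{}{\phi_1}\cdots \prescript{d}{}{\phi_{d-1}}(\sigma) \cdot e_1 = \mathcal{G}_{5,d}(2F)(\sigma)\,e_1$; hence the largest $n$ with $\sigma\tilde{e}_d \rightarrowtail ne_1$ is $\leq \mathcal{G}_{5,d}(2F)(\sigma)$, which is the conclusion. The specialization to $F = \bar{A}_d$ is then automatic. The only real delicacy is the bookkeeping: at each inductive step one must verify that replacing a decreasing intermediate coordinate by its initial upper bound $\sigma$ only enlarges the output of $F'$, which is where the ascending hypothesis on $F$ is used repeatedly.
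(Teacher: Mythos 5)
Your proposal is correct and follows essentially the same route as the paper: the paper's formal proof of this proposition is just the one-line chain $\mathcal{G}_{3,d}(F)(\sigma\tilde{e}_d)=\mathcal{F}_{dec,d}(2F)(\sigma\tilde{e}_d)\leq \mathcal{G}_{5,d}(2F)(\sigma)$, with the real content deferred to the preceding discussion of bounding $F$-decompositions, which is exactly the induction on the maximal degree of simple decompositions (majorizing non-decreasing coordinates via the ascending hypothesis and packaging the two-level iteration into $\prescript{d}{}{\psi}_{i,\sigma}$ and $\prescript{d}{}{\phi}_i$) that you spell out.
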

\begin{proof}
$B_d(\sigma\tilde{e}_d)=\mathcal{G}_{3,d}(F)(\sigma\tilde{e}_d)=\mathcal{F}_{dec,d}(2F)(\sigma\tilde{e}_d)\leq \mathcal{G}_{5,d}(2F)(\sigma)$.    
\end{proof}
Now we consider the case where the value of a function $E_d \to \mathbb{Z}_+$ only depends on $|\delta|$. 
\begin{definition}\label{4.6}
For a function $f:\mathbb{Z}_+\to\mathbb{Z}_+$, set   
$$\mathcal{H}_{1,d}(f): \sigma \mapsto \mathcal{G}_{5,d}(2f(|\cdot|))(\sigma).$$
\end{definition}
Let $f:\mathbb{Z}_+ \to \mathbb{Z}_+$ be an increasing function and $g=f(|\cdot|):E_d \to \mathbb{Z}_+$. We take $F=2g$ and this gives us the notations of $\prescript{d}{}{\psi}_{i,\sigma}(\cdot)$ and $\prescript{d}{}{\phi}_i(\cdot)$ depending on $F$. We would like to check the action of $\mathcal{G}_{5,d}$ on the function $F=2g$. In this case,
$$\prescript{d}{}{\psi_{1,\sigma}}(\epsilon)=\epsilon+2g(\sigma,\ldots,\sigma,\epsilon)=\epsilon+2f((d-1)\sigma+\epsilon);$$
$$\prescript{d}{}{\psi_{2,\sigma}}(\epsilon)=\prescript{d}{}{\psi}^\epsilon_{1,\epsilon}(\epsilon)+2g(\sigma,\ldots,\sigma,\prescript{d}{}{\psi}^\epsilon_{1,\epsilon}(\epsilon),\prescript{d}{}{\psi}^\epsilon_{1,\epsilon}(\epsilon))=\prescript{d}{}{\psi}^\epsilon_{1,\epsilon}(\epsilon)+2f((d-2)\sigma+2\cdot\prescript{d}{}{\psi}^\epsilon_{1,\epsilon}(\epsilon)).$$
And inductively, for $2 \leq i \leq d-1$,
$$\prescript{d}{}{\psi}_{i,\sigma}(\epsilon)=\prescript{d}{}{\psi}^\epsilon_{i-1,\epsilon}(\epsilon)+2g(\sigma(\tilde{e}_d-\tilde{e}_i)+\prescript{d}{}{\psi}^\epsilon_{i-1,\epsilon}(\epsilon)\tilde{e}_i)=\prescript{d}{}{\psi}^\epsilon_{i-1,\epsilon}(\epsilon)+2f((d-i)\sigma+i\cdot\prescript{d}{}{\psi}^\epsilon_{i-1,\epsilon}(\epsilon)).$$
Let $\prescript{d}{}{\phi}_i(\sigma)=\prescript{d}{}{\psi}^{\sigma}_{i,\sigma}(\sigma)$. Then
$$\mathcal{H}_{1,d}(f)(\sigma)=\mathcal{G}_{5,d}(2g)(\sigma)=\prescript{d}{}{\phi}_1\ldots\prescript{d}{}{\phi}_{d-1}(\sigma).$$
\begin{lemma}
For any increasing function $f$ and any $i,\sigma,\epsilon$, the function $\prescript{d}{}{\psi}_{i,\sigma}$ satisfies $\prescript{d}{}{\psi}_{i,\sigma}(\epsilon)\geq \epsilon$.    
\end{lemma}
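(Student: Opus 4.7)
The plan is to prove the statement by induction on the index $i$ running from $1$ to $d-1$, with the induction hypothesis being the uniform statement that $\prescript{d}{}{\psi}_{i-1,\sigma'}(\epsilon') \geq \epsilon'$ for all choices of $\sigma'$ and $\epsilon'$. The uniformity in both subscripts is crucial because in the recursive definition of $\prescript{d}{}{\psi}_{i,\sigma}(\epsilon)$ the subscript of the inner $\psi$ is specialized to $\epsilon$, not to $\sigma$, so we must be able to invoke the hypothesis at that particular instance.

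For the base case $i = 1$, I would read off directly from the explicit formula
\[
\prescript{d}{}{\psi}_{1,\sigma}(\epsilon) = \epsilon + 2f\bigl((d-1)\sigma + \epsilon\bigr),
\]
and note that since $f$ maps $\mathbb{Z}_+ \to \mathbb{Z}_+$, the second summand is a nonnegative integer, so the inequality $\prescript{d}{}{\psi}_{1,\sigma}(\epsilon)\geq \epsilon$ is immediate.

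For the inductive step, I would unfold the definition
\[
\prescript{d}{}{\psi}_{i,\sigma}(\epsilon) = \prescript{d}{}{\psi}^{\epsilon}_{i-1,\epsilon}(\epsilon) + 2f\bigl((d-i)\sigma + i\cdot \prescript{d}{}{\psi}^{\epsilon}_{i-1,\epsilon}(\epsilon)\bigr),
\]
so it suffices to show $\prescript{d}{}{\psi}^{\epsilon}_{i-1,\epsilon}(\epsilon)\geq \epsilon$. By the induction hypothesis (applied uniformly with $\sigma' = \epsilon$), the map $x \mapsto \prescript{d}{}{\psi}_{i-1,\epsilon}(x)$ satisfies $\prescript{d}{}{\psi}_{i-1,\epsilon}(x)\geq x$ for every $x$. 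Setting $x_0 = \epsilon$ and $x_{n+1} = \prescript{d}{}{\psi}_{i-1,\epsilon}(x_n)$, we get a nondecreasing sequence, hence $x_n \geq x_0 = \epsilon$ for all $n$; in particular for $n = \epsilon$ this yields $\prescript{d}{}{\psi}^{\epsilon}_{i-1,\epsilon}(\epsilon)\geq \epsilon$, which completes the step.

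I do not expect any genuine obstacle here: the argument is a clean nested induction, and the only point that requires care is remembering that the inductive hypothesis must be stated uniformly in the subscript $\sigma'$ so that it can be specialized to $\sigma' = \epsilon$ inside the recursion. With that uniformity, the iteration argument and the nonnegativity of $2f(\cdot)$ finish the proof.
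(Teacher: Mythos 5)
Your proof is correct and is exactly the induction on $i$ that the paper itself invokes (the paper's proof is the single line ``By induction on $i$''); you have simply spelled out the base case, the iteration argument for $\prescript{d}{}{\psi}^{\epsilon}_{i-1,\epsilon}(\epsilon)\geq\epsilon$, and the need for uniformity of the hypothesis in the subscript $\sigma$. No further comment is needed.
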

\begin{proof}
By induction on $i$.    
\end{proof}
\begin{definition}
Let $\mathcal{P}_\zeta$ be a 2-package of functions $\hat{A}_d:\mathbb{Z}_+\to\mathbb{Z}_+,\hat{B}_d:\mathbb{Z}_+\to\mathbb{Z}_+, d \geq 2$. The initial condition $\mathcal{I}_\zeta$ is $\hat{A}_2(\sigma)=4\sigma+4$. The recurrence relation $\mathcal{R}_\zeta$ is given by $\hat{B}_d=\mathcal{H}_{1,d}(\hat{A}_d)$ and $\hat{A}_{d+1}=\mathcal{H}_{2,d}(\hat{B}_d)$. Here
$$\mathcal{H}_{2,d}(f):\sigma \mapsto \sigma+f(d^{2^{f(3\sigma+4)}})$$
and $\mathcal{H}_{1,d}$ is defined in Definition \ref{4.6}.
\end{definition}
\begin{theorem}
Let $\hat{A}_{\mathcal{P}_\zeta,d},\hat{B}_{\mathcal{P}_\zeta,d}$ be the functions in the package $\mathcal{P}_\zeta$. Let $\bar{A}_{\mathcal{P}_\gamma,d}, B_{\mathcal{P}_\gamma,d}$ be the functions in the package $\mathcal{P}_\gamma$. Then for any $d \geq 2$, $\sigma \in \mathbb{Z}_+$, $\delta \in E_d$, $\bar{A}_{\mathcal{P}_\gamma,d}(\delta) \leq \hat{A}_{\mathcal{P}_\zeta,d}(|\delta|)$ and $B_{\mathcal{P}_\gamma,d}(\sigma\tilde{e}_d) \leq \hat{B}_{\mathcal{P}_\zeta,d}(\sigma)$. 
\end{theorem}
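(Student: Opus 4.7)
The plan is to proceed by induction on $d$, proving both inequalities simultaneously. The base case $d=2$ is immediate because the initial conditions $\mathcal{I}_\gamma$ and $\mathcal{I}_\zeta$ are defined by the same formula $4|\delta|+2$ (resp. $4\sigma+2$), so $\bar{A}_{\mathcal{P}_\gamma,2}(\delta)=4|\delta|+2=\hat{A}_{\mathcal{P}_\zeta,2}(|\delta|)$, with equality. This establishes the $\bar{A}$-inequality at $d=2$; the matching $B$-inequality at $d=2$ will fall out of the same argument as in the inductive step.

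For the inductive step, suppose $\bar{A}_{\mathcal{P}_\gamma,d}(\delta)\leq \hat{A}_{\mathcal{P}_\zeta,d}(|\delta|)$ for all $\delta\in E_d$. First I would derive the $B$-bound. By the recurrence $\mathcal{R}_\gamma$, we have $B_{\mathcal{P}_\gamma,d}=\mathcal{G}_{3,d}(\bar{A}_{\mathcal{P}_\gamma,d})$, so the key Proposition preceding the statement gives
\[
B_{\mathcal{P}_\gamma,d}(\sigma\tilde{e}_d)\leq \mathcal{G}_{5,d}\bigl(2\bar{A}_{\mathcal{P}_\gamma,d}\bigr)(\sigma).
\]
Since $\mathcal{G}_{5,d}$ is ascending in its input function and, by the inductive hypothesis, $2\bar{A}_{\mathcal{P}_\gamma,d}(\delta)\leq 2\hat{A}_{\mathcal{P}_\zeta,d}(|\delta|)$ pointwise on $E_d$, the right-hand side is bounded by $\mathcal{G}_{5,d}\bigl(2\hat{A}_{\mathcal{P}_\zeta,d}(|\cdot|)\bigr)(\sigma)$, which is precisely the definition of $\mathcal{H}_{1,d}(\hat{A}_{\mathcal{P}_\zeta,d})(\sigma)=\hat{B}_{\mathcal{P}_\zeta,d}(\sigma)$. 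This yields $B_{\mathcal{P}_\gamma,d}(\sigma\tilde{e}_d)\leq \hat{B}_{\mathcal{P}_\zeta,d}(\sigma)$.

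Next I would push this through to $\bar{A}_{d+1}$. Unwinding the recurrences,
\[
\bar{A}_{\mathcal{P}_\gamma,d+1}(\delta)=|\delta|+B_{\mathcal{P}_\gamma,d}\!\left(d^{2^{B_{\mathcal{P}_\gamma,d}((3|\delta|+4)\tilde{e}_d)}}\tilde{e}_d\right),
\]
\[
\hat{A}_{\mathcal{P}_\zeta,d+1}(|\delta|)=|\delta|+\hat{B}_{\mathcal{P}_\zeta,d}\!\left(d^{2^{\hat{B}_{\mathcal{P}_\zeta,d}(3|\delta|+4)}}\right).
\]
Applying the already-established $B$-inequality in the inner exponent gives $B_{\mathcal{P}_\gamma,d}((3|\delta|+4)\tilde{e}_d)\leq \hat{B}_{\mathcal{P}_\zeta,d}(3|\delta|+4)$, so the argument $d^{2^{(\cdot)}}$ on the left is no larger than the one on the right. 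Then applying the $B$-inequality once more at this argument $\sigma=d^{2^{\hat{B}_{\mathcal{P}_\zeta,d}(3|\delta|+4)}}$ (using that $B_{\mathcal{P}_\gamma,d}$ is ascending in $\delta$) yields the desired bound for $\bar{A}_{\mathcal{P}_\gamma,d+1}$.

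The proof is essentially a careful bookkeeping exercise, so I do not expect genuine obstacles. The one subtle point is the legitimacy of the monotonicity arguments: applying ascending functionals to pointwise inequalities, and chaining a pointwise $B$-inequality through an iterated exponential. Both are guaranteed by Settings 2.10 (everything in sight is ascending/positive) together with the monotonicity of $\mathcal{G}_{5,d}$ in its functional input, which is immediate from its explicit recursive definition in terms of $\prescript{d}{}{\psi}_{i,\sigma}$ and $\prescript{d}{}{\phi}_i$.
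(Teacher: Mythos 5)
Your proposal is correct and follows essentially the same route as the paper: induction on $d$ with the base case given by the matching initial conditions, the $B$-inequality obtained from $B_{\mathcal{P}_\gamma,d}(\sigma\tilde{e}_d)\leq\mathcal{G}_{5,d}(2\bar{A}_{\mathcal{P}_\gamma,d})(\sigma)$ together with the ascending property of $\mathcal{G}_{5,d}$, and the $\bar{A}_{d+1}$-inequality obtained by applying the $B$-inequality twice (in the inner exponent and then at the outer argument) while using that $B_{\mathcal{P}_\gamma,d}$ is ascending. No gaps.
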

\begin{proof}
We prove this by induction. By definition $\hat{A}_{\mathcal{P}_\zeta,2}(|\delta|)=4|\delta|+4=\bar{A}_{\mathcal{P}_\gamma,2}(\delta)$. Assume $\bar{A}_{\mathcal{P}_\gamma,d}(\delta) \leq \hat{A}_{\mathcal{P}_\zeta,d}(|\delta|)$ for some $d \geq 2$. The functional $\mathcal{G}_{5,d}$ is ascending, so
\begin{align*}
B_{\mathcal{P}_\gamma,d}(\sigma\tilde{e}_d)=\mathcal{G}_{3,d}(\bar{A}_{\mathcal{P}_\gamma,d})(\sigma\tilde{e}_d) \leq \mathcal{G}_{5,d}(2\bar{A}_{\mathcal{P}_\gamma,d})(\sigma)
\leq \mathcal{G}_{5,d}(2\hat{A}_{\mathcal{P}_\zeta,d}(|\cdot|))(\sigma)=\hat{B}_{\mathcal{P}_\zeta,d}(\sigma).
\end{align*}
Now assume $B_{\mathcal{P}_\gamma,d}(\sigma\tilde{e}_d) \leq \hat{B}_{\mathcal{P}_\zeta,d}(\sigma)$ for some $d \geq 2$. By the recurrence relations,
\begin{align*}
\bar{A}_{\mathcal{P}_\gamma,d+1}(\delta)=|\delta|+B_{\mathcal{P}_\gamma,d}(d^{2^{B_{\mathcal{P}_\gamma,d}((3|\delta|+4)\tilde{e}_d)}}\tilde{e}_d)\\
\leq |\delta|+B_{\mathcal{P}_\gamma,d}(d^{2^{\hat{B}_{\mathcal{P}_\zeta,d}(3|\delta|+4)}}\tilde{e}_d)\\
\leq |\delta|+\hat{B}_{\mathcal{P}_\zeta,d}(d^{2^{\hat{B}_{\mathcal{P}_\zeta,d}(3|\delta|+4)}})\\
=\mathcal{H}_{2,d}(\hat{B}_{\mathcal{P}_\zeta,d})(|\delta|)=\hat{A}_{\mathcal{P}_\zeta,d+1}(|\delta|).
\end{align*}
\end{proof}
\begin{definition}
We say an increasing function $f:\mathbb{Z}_+\to\mathbb{Z}_+$ is strictly increasing, if for $m,n \in \mathbb{Z}_+$, $f(m+n) \geq f(m)+n$. We say ascending function $f:E_d\to\mathbb{Z}_+$ is strictly ascending, if for $m \in E_d$ and $1 \leq i \leq d$, $f(m+e_i) \geq f(m)+1$.     
\end{definition}
Let $d \in \mathbb{Z}_+$, $f:\mathbb{Z}_+\to\mathbb{Z}_+$ be an strictly increasing function. We define $$\prescript{d}{}{\tilde{\psi}}_1(\epsilon)=2f((d+1)\epsilon), \prescript{d}{}{\tilde{\psi}}_i(\epsilon)=2f((d+1)\cdot\prescript{d}{}{\tilde{\psi}}^{\epsilon}_{i-1}(\epsilon)),\prescript{d}{}{\tilde{\phi_i}}(\epsilon)=\prescript{d}{}{\tilde{\psi}}^\epsilon_i(\epsilon),1\leq i\leq d-1.$$
\begin{theorem}
If $f$ is strictly increasing, then for the functions $\prescript{d}{}{\psi},\prescript{d}{}{\tilde{\psi}},\prescript{d}{}{\phi},\prescript{d}{}{\tilde{\phi}}$ defined from $F=2g=2f(|\cdot|)$ and any $\epsilon\geq \sigma$, $1 \leq i \leq d-1$,
\begin{enumerate}
\item $\prescript{d}{}{\psi}_{i,\sigma}(\epsilon)\leq \prescript{d}{}{\tilde{\psi}}_i(\epsilon)$;
\item $\prescript{d}{}{\phi}_i(\epsilon)\leq\prescript{d}{}{\tilde{\phi}}_i(\epsilon)$;
\item For any $k \geq 1$, $\prescript{d}{}{\psi}^k_{i,\epsilon}(\epsilon) \leq \prescript{d}{}{\tilde{\psi}}^k_i(\epsilon)$.
\end{enumerate}  
\end{theorem}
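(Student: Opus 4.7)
The plan is to establish the three statements simultaneously by a primary induction on $i$ with a secondary induction on the iteration count $k$. It is convenient to unify parts (2) and (3) into the auxiliary claim $(2')$: $\prescript{d}{}{\psi}^k_{i,\epsilon}(\epsilon)\leq \prescript{d}{}{\tilde\psi}^k_i(\epsilon)$ for all $1\leq i\leq d-1$ and $k\geq 1$, so that (2) is exactly $(2')$ with index $i-1$ and (3) is $(2')$ evaluated at $k=\epsilon$. It therefore suffices to prove (1) and $(2')$. I will use throughout two routine monotonicity properties of the tilded sequence: each $\prescript{d}{}{\tilde\psi}_i$ is increasing in its argument, and $\prescript{d}{}{\tilde\psi}_i(x)\geq x$. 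Both follow by induction on $i$ from the fact that a strictly increasing $f\colon\mathbb{Z}_+\to\mathbb{Z}_+$ satisfies $f(k)\geq k$.

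For the base case $i=1$ of (1), since $\sigma\leq\epsilon$ we have $(d-1)\sigma+\epsilon\leq d\epsilon$, and strict monotonicity of $f$ gives
\[
\prescript{d}{}{\tilde\psi}_1(\epsilon)=2f(d\epsilon+\epsilon)\geq 2f(d\epsilon)+2\epsilon\geq \epsilon+2f((d-1)\sigma+\epsilon)=\prescript{d}{}{\psi}_{1,\sigma}(\epsilon).
\]
For the inductive step of (1) at $i\geq 2$, assume $(2')$ at index $i-1$, and set $a:=\prescript{d}{}{\psi}^\epsilon_{i-1,\epsilon}(\epsilon)$ and $b:=\prescript{d}{}{\tilde\psi}^\epsilon_{i-1}(\epsilon)$. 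Lemma~4.7 gives $a\geq\epsilon\geq\sigma$, and $(2')$ gives $a\leq b$. The key estimate is $(d+1)b=db+b\geq da+a\geq (d-i)\sigma+ia+a$, so strict monotonicity yields
\[
\prescript{d}{}{\tilde\psi}_i(\epsilon)=2f((d+1)b)\geq 2f((d-i)\sigma+ia)+2a\geq a+2f((d-i)\sigma+ia)=\prescript{d}{}{\psi}_{i,\sigma}(\epsilon).
\]

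Part $(2')$ at index $i$ is then a secondary induction on $k$ using (1) at index $i$ already in hand. The base $k=1$ is (1) specialized to $\sigma=\epsilon$. For $k\geq 2$, put $c:=\prescript{d}{}{\psi}^{k-1}_{i,\epsilon}(\epsilon)$; Lemma~4.7 gives $c\geq\epsilon$, so (1) applied with $\sigma=\epsilon$ and argument $c$ yields $\prescript{d}{}{\psi}_{i,\epsilon}(c)\leq\prescript{d}{}{\tilde\psi}_i(c)$. The secondary inductive hypothesis gives $c\leq\prescript{d}{}{\tilde\psi}^{k-1}_i(\epsilon)$, and monotonicity of $\prescript{d}{}{\tilde\psi}_i$ closes the loop: $\prescript{d}{}{\tilde\psi}_i(c)\leq\prescript{d}{}{\tilde\psi}^k_i(\epsilon)$. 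Specializing $(2')$ to $k=\epsilon$ produces (3).

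The main obstacle is bookkeeping rather than mathematical substance: the primary induction on $i$ and the secondary induction on $k$ must be nested in the correct order, and every invocation of (1) at an index below $i$ must be applied to an argument that is bounded below by the parameter $\sigma$ appearing in (1). Lemma~4.7 is precisely what guarantees that every iterate $\prescript{d}{}{\psi}^{k}_{j,\epsilon}(\epsilon)$ stays $\geq\epsilon$, so the side condition $\epsilon\geq\sigma$ of (1) is never violated, and the nested induction goes through uniformly across $i$ and $k$.
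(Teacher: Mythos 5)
Your proof is correct and takes essentially the same route as the paper's: a primary induction on $i$ interleaved with a secondary induction on $k$, using (1) specialized to $\sigma=\epsilon$ together with the bound $\prescript{d}{}{\psi}^{k}_{i,\epsilon}(\epsilon)\geq\epsilon$ (Lemma 4.7) to run the $k$-induction, and then the specialization $k=\epsilon$ to pass to $\prescript{d}{}{\phi}_i$. The only cosmetic difference is that in the $k$-step you apply (1) before the monotonicity of $\prescript{d}{}{\tilde{\psi}}_i$, whereas the paper first applies monotonicity of $\prescript{d}{}{\psi}_{i,\epsilon}$ and then (1); the two orderings are equivalent.
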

\begin{proof}
We prove (i) and (iii) inductively on $i$ and $k$. For $i=1$, $\prescript{d}{}{\psi}_{1,\sigma}(\epsilon)=\sigma+2f((d-1)\sigma+\epsilon)\leq 2f((d+1)\epsilon)$ since $f$ is strictly increasing and $\sigma \leq \epsilon$, so (i) holds. If (i) is true for $i-1\geq 1$, then for $i$ we have $\prescript{d}{}{\psi}_{i,\sigma}(\epsilon)=\prescript{d}{}{\psi}^\epsilon_{i-1,\epsilon}(\epsilon)+2f((d-i)\sigma+i\cdot\prescript{d}{}{\psi}^\epsilon_{i-1,\epsilon}(\epsilon))\leq 2f((d-i)\sigma+(i+1)\cdot\prescript{d}{}{\psi}^\epsilon_{i-1,\epsilon}(\epsilon))$. Since $\sigma\leq\epsilon\leq \prescript{d}{}{\psi}^\epsilon_{i-1,\epsilon}(\epsilon)$, $\prescript{d}{}{\psi}_{i,\sigma}(\epsilon)\leq 2f((d+1)\cdot\prescript{d}{}{\psi}^\epsilon_{i-1,\epsilon}(\epsilon))$. We claim that (i) holds true for $i-1$ implies (iii) holds true for $i-1$ and any $k$ by induction on $k$. First, (i) for $i-1$ just means (iii) for $i-1$ and $k=1$, so they hold at the same time. If (iii) holds for $i-1$ and $k-1$, then for $k$ we have
$$\prescript{d}{}{\psi}^k_{i-1,\epsilon}(\epsilon)=\prescript{d}{}{\psi}_{i-1,\epsilon}(\prescript{d}{}{\psi}^{k-1}_{i-1,\epsilon}(\epsilon)) \leq \prescript{d}{}{\psi}_{i-1,\epsilon}(\prescript{d}{}{\tilde{\psi}}^{k-1}_{i-1}(\epsilon)) \leq \prescript{d}{}{\tilde{\psi}}_{i-1}(\prescript{d}{}{\tilde{\psi}}^{k-1}_{i-1}(\epsilon))=\prescript{d}{}{\tilde{\psi}}^k_{i-1}(\epsilon).$$
The first inequality is true because the function $\prescript{d}{}{\psi}_{i-1,\epsilon}$ is increasing, and the second inequality is true because $\prescript{d}{}{\tilde{\psi}}^{k-1}_{i-1}(\epsilon) \geq \prescript{d}{}{\psi}^{k-1}_{i-1,\epsilon}(\epsilon)\geq\epsilon$, so we may replace $\epsilon$ by $\prescript{d}{}{\tilde{\psi}}^{k-1}_{i-1}(\epsilon)$ and $\sigma$ by $\epsilon$, so by induction hypothesis the inequality holds. So (iii) is true for $i-1$ and any $k \geq 1$. In particular, (iii) is true for $i-1$ and $k=\epsilon$, so we have $\prescript{d}{}{\psi}_{i,\sigma}(\epsilon)\leq 2f((d+1)\cdot\prescript{d}{}{\psi}^\epsilon_{i-1,\epsilon}(\epsilon))\leq 2f((d+1)\cdot\prescript{d}{}{\tilde{\psi}}^\epsilon_{i-1}(\epsilon))=\prescript{d}{}{\tilde{\psi}}_i(\epsilon)$, so (i) is true for $i$, so we get (i) and (iii) by induction. (ii) is the particular case of (iii) when $k=\epsilon$.
\end{proof}
\begin{definition}\label{4.12}
For $d \geq 2$, we define the following functional $\mathcal{H}_{3,d}$ from a function $A_d:\mathbb{Z}_{\geq 2} \to \mathbb{Z}_{\geq 2}$ to $B_d:\mathbb{Z}_{\geq 2} \to \mathbb{Z}_{\geq 2}$ by the following equation:
$$\prescript{d}{}{\psi}_1(\epsilon)=2A_d((d+1)\epsilon),\prescript{d}{}{\psi}_i(\epsilon)=2A_d((d+1)\psi^\epsilon_{i-1}(\epsilon)),2\leq i \leq d-1,$$
$$\prescript{d}{}{\phi}_i(\epsilon)=\prescript{d}{}{\psi}^{\epsilon}_i(\epsilon), B_d(\sigma)=\prescript{d}{}{\phi}_1\circ\prescript{d}{}{\phi}_2\circ\ldots\circ\prescript{d}{}{\phi}_{d-1}(\sigma).$$
If the above equations are satisfied, we say $B_d=\mathcal{H}_{3,d}(A_d)$.
\end{definition}
\begin{proposition}\label{4.13}
If $f$ is strictly increasing, then we have
$$\mathcal{H}_{1,d}(f) \leq \mathcal{H}_{3,d}(f).$$
\end{proposition}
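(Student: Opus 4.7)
The plan is to elevate the pointwise inequality already established in Theorem 4.12(3) to an inequality between the full compositions. Unraveling the definitions of $\mathcal{H}_{1,d}$ (via Definition 4.6 and $\mathcal{G}_{5,d}$, specialized to $F=2f(|\cdot|)$) and $\mathcal{H}_{3,d}$ (Definition 4.13), one has
$$\mathcal{H}_{1,d}(f)(\sigma) = \prescript{d}{}{\phi}_1 \circ \prescript{d}{}{\phi}_2 \circ \cdots \circ \prescript{d}{}{\phi}_{d-1}(\sigma), \qquad \mathcal{H}_{3,d}(f)(\sigma) = \prescript{d}{}{\tilde{\phi}}_1 \circ \prescript{d}{}{\tilde{\phi}}_2 \circ \cdots \circ \prescript{d}{}{\tilde{\phi}}_{d-1}(\sigma),$$
and Theorem 4.12(3) supplies the pointwise bound $\prescript{d}{}{\phi}_i(\epsilon) \leq \prescript{d}{}{\tilde{\phi}}_i(\epsilon)$ for every $\epsilon$ and every $1 \leq i \leq d-1$.

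I would proceed by downward induction on $k$: show that for each $1 \leq k \leq d-1$,
$$\prescript{d}{}{\phi}_k \circ \cdots \circ \prescript{d}{}{\phi}_{d-1}(\sigma) \leq \prescript{d}{}{\tilde{\phi}}_k \circ \cdots \circ \prescript{d}{}{\tilde{\phi}}_{d-1}(\sigma).$$
The base case $k = d-1$ is exactly Theorem 4.12(3). For the inductive step, set
$$\tau := \prescript{d}{}{\tilde{\phi}}_{k+1} \circ \cdots \circ \prescript{d}{}{\tilde{\phi}}_{d-1}(\sigma).$$
The inductive hypothesis together with monotonicity of $\prescript{d}{}{\phi}_k$ gives
$$\prescript{d}{}{\phi}_k\bigl(\prescript{d}{}{\phi}_{k+1} \circ \cdots \circ \prescript{d}{}{\phi}_{d-1}(\sigma)\bigr) \leq \prescript{d}{}{\phi}_k(\tau),$$
and a second application of Theorem 4.12(3) bounds $\prescript{d}{}{\phi}_k(\tau)$ by $\prescript{d}{}{\tilde{\phi}}_k(\tau)$. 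Taking $k = 1$ yields the proposition.

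The only ingredient not already in hand is the fact that each $\prescript{d}{}{\phi}_k$ is a non-decreasing function of its single argument. I would justify this by an auxiliary induction on $i$: from the explicit recursion for $\prescript{d}{}{\psi}_{i,\sigma}(\epsilon)$ when $F = 2f(|\cdot|)$, the function is non-decreasing separately in both $\sigma$ and $\epsilon$ (immediate since $f$ is increasing and the recipe uses only positive combinations of $f$ and previously constructed $\psi$'s), while Lemma 4.11 guarantees $\prescript{d}{}{\psi}_{i,\sigma}(\epsilon) \geq \epsilon$. Hence enlarging $\epsilon$ in $\prescript{d}{}{\phi}_i(\epsilon) = \prescript{d}{}{\psi^{\epsilon}_{i,\epsilon}}(\epsilon)$ simultaneously enlarges the parameter $\sigma = \epsilon$, the iteration count, and the input, each of which can only grow the output.

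The main obstacle is notational rather than conceptual — keeping track of which monotonicity is invoked at which slot, and being careful that the hypothesis $\epsilon \geq \sigma$ of Theorem 4.12 is satisfied when specializing to $\phi_i$ (which it is, trivially, since one has $\sigma = \epsilon$). The substantive work has already been carried out in Theorem 4.12(3); the present proposition is a straightforward monotone bootstrap of that pointwise bound through a composition of non-decreasing functions.
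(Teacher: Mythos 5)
Your argument is correct and is exactly the intended one: the paper states this proposition without proof as an immediate consequence of Theorem 4.12(3), and your write-up supplies the routine bootstrap through the composition $\prescript{d}{}{\phi}_1\circ\cdots\circ\prescript{d}{}{\phi}_{d-1}$ versus $\prescript{d}{}{\tilde{\phi}}_1\circ\cdots\circ\prescript{d}{}{\tilde{\phi}}_{d-1}$, including the monotonicity of each $\prescript{d}{}{\phi}_k$ that the chaining requires (and which the paper's Settings 2.10 sweeps under the rug). A marginally lighter variant would chain in the other order, i.e.\ apply Theorem 4.12(3) first at the point $\prescript{d}{}{\phi}_{k+1}\circ\cdots\circ\prescript{d}{}{\phi}_{d-1}(\sigma)$ and then use monotonicity of $\prescript{d}{}{\tilde{\phi}}_k$, whose closed form makes that monotonicity immediate, but your version is equally valid.
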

\begin{notations}
For $d \geq 2$, we define a package $\mathcal{P}_\theta$ of two sequences of functions $A_d,B_d:\mathbb{Z}_{\geq 2}\to \mathbb{Z}_{\geq 2}$ and we define two sets of functions associated to this package indexed by integers $1 \leq i\leq d-1$: $\prescript{d}{}{\psi}_i,\prescript{d}{}{\phi}_i:\mathbb{Z}_+\to\mathbb{Z}_+$. They are functions defined in the following way:
\begin{enumerate}
\item Let $A_2(\sigma)=4\sigma+4$. 
\item For function $f$, $\mathcal{H}_{2,d}(f)\colon \sigma \mapsto \sigma+f(d^{2^{f(3\sigma+4)}})$, then $A_{d+1}=\mathcal{H}_{2,d}(B_d)$.
\item $B_d=\mathcal{H}_{3,d}(A_d)$.
\end{enumerate}
Inductively we can prove if $\sigma \geq 2$, then $A_d(\sigma),B_d(\sigma) \geq 2$, so they are well-defined.
\end{notations}
\begin{theorem}
For any $d \geq 2$, all the functions in the package $\mathcal{P}_\zeta$ and $\mathcal{P}_\theta$ are strictly increasing, and $A_{\mathcal{P}_\theta,d} \geq \hat{A}_{\mathcal{P}_\zeta,d}$ and $B_{\mathcal{P}_\theta,d} \geq \hat{B}_{\mathcal{P}_\zeta,d}$ on $\mathbb{Z}_{\geq 2}$.   
\end{theorem}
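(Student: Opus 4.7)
The plan is to prove the statement by induction on $d$, pairing the inequality $A_{\mathcal{P}_\theta,d}\geq \hat A_{\mathcal{P}_\zeta,d}$ (resp.\ $B_{\mathcal{P}_\theta,d}\geq \hat B_{\mathcal{P}_\zeta,d}$) with the strict increasing property of both functions, because the reduction machinery of Proposition 4.15 is exactly the bridge that requires a strictly increasing input. The base case $d=2$ is immediate: by the initial conditions in Notations 4.16 and in Definition 4.9, $A_{\mathcal{P}_\theta,2}(\sigma)=\hat A_{\mathcal{P}_\zeta,2}(\sigma)=4\sigma+2$, and $4\sigma+2$ is strictly increasing in the sense of Definition 4.12.

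For the inductive step I would proceed in two pieces corresponding to the two recurrences. Assume both $A_{\mathcal{P}_\theta,d}$ and $\hat A_{\mathcal{P}_\zeta,d}$ are strictly increasing with $A_{\mathcal{P}_\theta,d}\geq \hat A_{\mathcal{P}_\zeta,d}$. First, compare the $B$'s: since $\hat A_{\mathcal{P}_\zeta,d}$ is strictly increasing, Proposition 4.15 gives
\[
\hat B_{\mathcal{P}_\zeta,d}=\mathcal{H}_{1,d}(\hat A_{\mathcal{P}_\zeta,d})\leq \mathcal{H}_{3,d}(\hat A_{\mathcal{P}_\zeta,d}),
\]
and the monotonicity of $\mathcal{H}_{3,d}$ in its input function (visible directly from the definition, since each $\prescript{d}{}{\psi}_i$ is built by composing $f$ with itself through ascending arithmetic operations) upgrades this to
\[
\hat B_{\mathcal{P}_\zeta,d}\leq \mathcal{H}_{3,d}(A_{\mathcal{P}_\theta,d})=B_{\mathcal{P}_\theta,d}.
\]
Second, compare the $A$'s at degree $d+1$: once the $B$'s are shown to be strictly increasing (see next paragraph), the explicit formula $\mathcal{H}_{2,d}(f)(\sigma)=\sigma+f(d^{2^{f(3\sigma+4)}})$ is manifestly monotone in $f$, so
\[
\hat A_{\mathcal{P}_\zeta,d+1}=\mathcal{H}_{2,d}(\hat B_{\mathcal{P}_\zeta,d})\leq \mathcal{H}_{2,d}(B_{\mathcal{P}_\theta,d})=A_{\mathcal{P}_\theta,d+1}.
\]

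The main obstacle—and the part I would spend the most care on—is the propagation of the strictly increasing property through $\mathcal{H}_{3,d}$ and $\mathcal{H}_{1,d}$. For $\mathcal{H}_{2,d}$ this is trivial: if $f$ is increasing, then adding $n$ to $\sigma$ adds at least $n$ to $\sigma+f(d^{2^{f(3\sigma+4)}})$, so $\mathcal{H}_{2,d}(f)$ is strictly increasing. For $\mathcal{H}_{3,d}$ I would verify by a nested induction on $i$ from $1$ to $d-1$ that each $\prescript{d}{}{\psi}_i$ is strictly increasing whenever $f$ is, using $\prescript{d}{}{\psi}_1(\epsilon)=2f((d+1)\epsilon)$ as the base and the recursion $\prescript{d}{}{\psi}_i(\epsilon)=2f((d+1)\prescript{d}{}{\psi}_{i-1}^\epsilon(\epsilon))$ for the step; the iterate $\prescript{d}{}{\psi}_{i-1}^\epsilon(\epsilon)$ itself is strictly increasing in $\epsilon$ since the outer exponent and the base both ascend with $\epsilon$ and $\prescript{d}{}{\psi}_{i-1}(\epsilon)\geq \epsilon$. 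Passing this to $\prescript{d}{}{\phi}_i(\epsilon)=\prescript{d}{}{\psi}_i^\epsilon(\epsilon)$ and then to the composition $\prescript{d}{}{\phi}_1\circ\cdots\circ\prescript{d}{}{\phi}_{d-1}$ gives that $B_{\mathcal{P}_\theta,d}$ is strictly increasing. The exact same template applies to $\mathcal{H}_{1,d}$ using Definition 4.6 and the formulas following it; alternatively, one can simply observe $\hat B_{\mathcal{P}_\zeta,d}\leq B_{\mathcal{P}_\theta,d}$ combined with the strict increase of the latter suffices for running the next induction step.

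With these ingredients in place the induction closes: at each $d$ we obtain simultaneously the four conclusions (strict increase of $A_{\mathcal{P}_\theta,d},\hat A_{\mathcal{P}_\zeta,d}$ and the inequality $A_{\mathcal{P}_\theta,d}\geq \hat A_{\mathcal{P}_\zeta,d}$; likewise for $B$'s), then feed the $B$-comparison into $\mathcal{H}_{2,d}$ to advance to $d+1$. The only step that is not a routine monotonicity check is the strict-increase propagation through the nested iterates $\prescript{d}{}{\psi}_i^\epsilon(\epsilon)$, which is why I would single it out as the sub-lemma to prove first.
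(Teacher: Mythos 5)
Your proposal is correct and follows essentially the same route as the paper: the paper's own proof is a two-line sketch noting that the common initial condition $A_2$ is strictly increasing, that both recurrence relations preserve strict increase, and that the inequality then follows from Theorem 4.13 (via Proposition 4.15). Your write-up simply fills in the details of that same induction, including the monotonicity of $\mathcal{H}_{2,d}$ and $\mathcal{H}_{3,d}$ in their input functions and the propagation of strict increase through the iterates $\prescript{d}{}{\psi}_i^{\epsilon}(\epsilon)$.
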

\begin{proof}
The strictly increasing property can be proved inductively since the common initial condition $A_2$ is strictly increasing, and both recurrence relations map strictly increasing functions to strictly increasing functions. Thus the inequality can be derived from Proposition \ref{4.13}.    
\end{proof}
\section{Upper bound of $B_{\mathcal{P}_\theta,d}$ in terms of Knuth arrows}
In this section, we find an upper bound of $B_{\mathcal{P}_\theta,d}$ in terms of Knuth arrows. First we recall the definition of Knuth arrows:
\begin{definition}[Knuth arrows]
Assume $a,b,k \in \mathbb{Z}_+$. We denote
\begin{align*}
a\uparrow b=a^b\\
a\uparrow^k b=\underbrace{a\uparrow^{k-1}\ldots\uparrow^{k-1} a}_{a \textup{ appears } b \textup{ times}}.
\end{align*}   
\end{definition}
\begin{example}
We have:
\begin{enumerate}
\item $4\uparrow4=4^4=256.$
\item $4\uparrow\uparrow4=4\uparrow4\uparrow4\uparrow4=4^{4^{4^4}}=4^{4^{256}},$
\item $4\uparrow^34=4\uparrow\uparrow4\uparrow\uparrow4\uparrow\uparrow4=4\uparrow\uparrow4\uparrow\uparrow(4^{4^{256}})=4\uparrow\uparrow(4\uparrow\ldots\uparrow4)=4\uparrow\uparrow(4^{\ldots^4}).$
where the height of the small tower is $4^{4^{256}}$. The value of $4\uparrow\uparrow(4^{\ldots^4})$ is a larger power tower of $4$ whose height is the value of the small tower.
\end{enumerate}
\end{example}
When we evaluate an expression in terms of multiple Knuth arrows without parentheses, we always compute from right to left. For example,
$$4\uparrow\uparrow2\uparrow2\uparrow^53=4\uparrow\uparrow(2\uparrow(2\uparrow^53)).$$
Knuth arrow can express large numbers and its growth in terms of the inputs is extremely rapid. Here, we introduce some properties of Knuth arrows.
\begin{lemma}
We assume $a,b \geq 2,c,k \geq 1$.
\begin{enumerate}
\item The function $(a,k,b) \to a\uparrow^k b$ is strictly increasing in each variable.
\item $a\uparrow^k b \geq a^b \geq ab \geq a+b$. The first inequality becomes an equality if and only if $k=1$ or $a=b=2$, and the second and the third inequalities become equalities if and only if $a=b=2$.
\item $c+a\uparrow^k b \leq a\uparrow^k(b+c),c\cdot (a\uparrow^k b) \leq a\uparrow^k(cb)$.
\item (Left absorbing property) $a\uparrow^k a\uparrow^k\ldots a\uparrow^k(a\uparrow^{k+1}b)=a\uparrow^{k+1}(b+c)$, where $c$ is the number of the symbol $\uparrow^k$.
\end{enumerate}
\end{lemma}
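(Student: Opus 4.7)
The plan is to verify each of the four items by induction, working from the innermost definition $a \uparrow b = a^b$ outward in $k$, and always using part (2) as the workhorse once it is established. Throughout I will use the rewriting $a\uparrow^k(b+1)=a\uparrow^{k-1}(a\uparrow^k b)$ for $k\ge 2$ and $b\ge 1$, which is the recursive definition.

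For (1), I would establish monotonicity in $b$ first: for $k=1$ it is immediate from properties of $a^b$, and for $k\ge 2$ the identity above together with $a\uparrow^{k-1} x \ge x+1$ for $x\ge 2$ (a trivial consequence of $a\uparrow^{k-1}x \ge a^x$, proved next) forces strict increase. Monotonicity in $a$ follows by simultaneous induction on $(k,b)$, comparing $a\uparrow^{k-1}(a\uparrow^k(b-1))$ with $(a+1)\uparrow^{k-1}((a+1)\uparrow^k(b-1))$ using the inductive monotonicity. For $k$, I would compare $a\uparrow^{k+1}b$ with $a\uparrow^k b$ by unrolling once: $a\uparrow^{k+1}b = a\uparrow^k(a\uparrow^{k+1}(b-1))$, and then use $a\uparrow^{k+1}(b-1)\ge b$ (by part 2 plus induction in $b$) combined with monotonicity in the second argument of $\uparrow^k$. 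The degenerate case $a=b=2$ (where $2\uparrow^k 2 = 4$ for every $k$) shows that strictness in $k$ is the one place needing care; I will phrase this as nondecreasing in $k$, strict whenever $(a,b)\ne(2,2)$.

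For (2), the chain $a\uparrow^k b \ge a^b \ge ab \ge a+b$ is three inequalities handled separately. The first is by induction on $k$: trivial at $k=1$, and for $k\ge 2$ we get $a\uparrow^k b = a\uparrow^{k-1}(a\uparrow^k(b-1))\ge a^{a\uparrow^k(b-1)} \ge a^b$ using $a\uparrow^k(b-1)\ge b$ (proved in the same induction, base $b=2$). The inequality $a^b\ge ab$ reduces to $a^{b-1}\ge b$, easily proved by induction on $b$ for $a\ge 2$. The inequality $ab\ge a+b$ is $(a-1)(b-1)\ge 1$. The equality analyses then read off: $ab=a+b$ forces $(a-1)(b-1)=1$, hence $a=b=2$; $a^b=ab$ forces $a^{b-1}=b$, whose only solution in integers $\ge 2$ is $a=b=2$; and $a\uparrow^k b=a^b$ together with $k\ge 2,b\ge 2$ forces $a\uparrow^{k-1}(a\uparrow^k(b-1))=a^b$, which unwinding forces $a=b=2$ (otherwise the strict inequalities above block equality).

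For (3), the additive inequality $c+a\uparrow^k b\le a\uparrow^k(b+c)$ I would prove by induction on $c$; the base $c=1$ is $1+a\uparrow^k b\le a\uparrow^k(b+1)$, which reads $1+x\le a^x$ at $k=1$ (true for $a\ge 2$, $x\ge 1$) and $1+x\le a\uparrow^{k-1}x$ at $k\ge 2$ (from part 2 and $a^x\ge 1+x$). The multiplicative $c\cdot(a\uparrow^k b)\le a\uparrow^k(cb)$ is also by induction on $c$: for $k=1$ it reduces to $c\le a^{(c-1)b}$, clear since $a^{(c-1)b}\ge 4^{c-1}\ge c$; for $k\ge 2$ I would write $a\uparrow^k((c+1)b)\ge a\uparrow^{k-1}(a\uparrow^k(cb))\ge 2^{a\uparrow^k(cb)}\ge 2^{c(a\uparrow^k b)}\ge 2c(a\uparrow^k b)\ge(c+1)(a\uparrow^k b)$, chaining the inductive hypothesis with $2^y\ge 2y$ for $y\ge 2$. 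Finally, (4) is essentially a definition-chase: $a\uparrow^{k+1}(b+c)$ is the right-associated tower of $b+c$ copies of $a$ linked by $\uparrow^k$, and prepending $c$ instances of $a\uparrow^k$ to $a\uparrow^{k+1}b$ just extends that tower by $c$ copies of $a$; a formal induction on $c$ makes this rigorous. The main obstacle I anticipate is the multiplicative half of (3), where the naive use of the additive inequality is too weak and one must instead unroll one level of $\uparrow^k$ and invoke exponential growth at the $\uparrow^{k-1}$ layer; all remaining items are routine inductions once the tower-expansion identity and part (2) are in hand.
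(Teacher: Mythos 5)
The paper states this lemma without any proof, treating it as a routine exercise on the definition $a\uparrow^k b=a\uparrow^{k-1}(a\uparrow^k(b-1))$, so there is no argument of the authors' to compare against; your inductions supply exactly the verification that was omitted, and they are correct. Two remarks. First, your observation about part (1) is a genuine (if harmless) correction to the statement as written: since $2\uparrow^k 2=4$ for every $k$, the map $k\mapsto a\uparrow^k b$ is only nondecreasing in $k$ in general, strictly increasing when $(a,b)\neq(2,2)$; the paper only ever uses non-strict monotonicity (e.g.\ in the proof of the estimation lemma that follows), so nothing downstream is affected, but your restatement is the accurate one. Second, a small slip in the additive half of (3): at $k=1$ the base case $c=1$ reads $1+a^b\le a^{b+1}$, i.e.\ $1\le(a-1)a^b$, not ``$1+x\le a^x$'' --- the latter is the correct reduction only for $k\ge2$ after unrolling one level via $a\uparrow^k(b+1)=a\uparrow^{k-1}(a\uparrow^k b)\ge a^{a\uparrow^k b}$. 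Both inequalities are trivially true, so this is a misstatement of the reduction rather than a gap. Your handling of the multiplicative half of (3) --- unrolling one $\uparrow^k$ and using $a\uparrow^{k-1}y\ge 2^y\ge 2y$ rather than iterating the additive bound --- is indeed the step that requires an idea, and it is carried out correctly; part (4) is, as you say, immediate from the right-associated-tower definition.
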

The following estimation of the value of iterated Knuth arrow is essential in our inductive proof. Although this estimation is very rough, it is extremely convenient to use.
\begin{lemma}
Let $k_1,k_2,\ldots,c_1,c_2,\ldots,d_1,d_2,\ldots$ be integers at least one and $a_1,a_2,\ldots$ be integers at least two. Consider the expression
$$c_1\cdot(a_1\uparrow^{k_1}(c_2\cdot(a_2\uparrow^{k_2}(\ldots(\sigma)\ldots))+d_2))+d_1.$$
Let $k$ be the maximum of $2$ and all $k_i$'s, $a$ be the maximum of all $a_i,c_i,d_i$'s. Let $n$ be the number of all $i:c_i \neq 1$ plus the number of all $a_i$ and nonzero $d_i$'s. Then the above expression is no greater than
$$a\uparrow^{k+1}(\sigma+n-1).$$
\end{lemma}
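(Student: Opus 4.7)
The plan is to first reduce the expression by monotonicity, and then to run a descending induction level by level, absorbing each multiplication by $c_i$ and each addition of $d_i$ into an extra layer of $a\uparrow^k$, and finally collapsing the resulting tower into a single $a\uparrow^{k+1}$ via the left-absorbing property (Lemma 5.3(4)).

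For the reduction, by strict monotonicity of $a\uparrow^k b$ in all three arguments (Lemma 5.3(1)), I would replace each $a_i$, $c_i$, $d_i$ by the common upper bound $a$ and each $k_i$ by $k$. Since this preserves the counts entering $n$, it reduces the problem to bounding the expression $E_1$ defined recursively by $E_{m+1}=\sigma$ and $E_i=\tilde{c}_i(a\uparrow^k E_{i+1})+\tilde{d}_i$ with $\tilde{c}_i\in\{1,a\}$ and $\tilde{d}_i\in\{0,a\}$: specifically, $\tilde{c}_i=a$ exactly when $c_i\neq 1$ and $\tilde{d}_i=a$ exactly when $d_i\neq 0$. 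Let $s_i\in\{1,2,3\}$ denote the contribution of level $i$ to $n$ (one for $a_i$, plus one if $c_i\neq 1$, plus one if $d_i\neq 0$), and set $n_i:=\sum_{j=i}^m s_j$, so that $n=n_1$.

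I would then prove by descending induction on $i$ the estimate
\[
E_i\leq a\uparrow^{k+1}(\sigma+n_i-1).
\]
The base case $i=m+1$ reduces to $\sigma\leq a\uparrow^{k+1}(\sigma-1)$, which holds for $\sigma\geq 2$ since $a\uparrow^{k+1}(\sigma-1)\geq 2^{\sigma-1}\geq \sigma$. For the inductive step, set $M:=a\uparrow^{k+1}(\sigma+n_{i+1}-1)\geq a$. A case analysis on $(\tilde{c}_i,\tilde{d}_i)$ shows, using property (3) of Lemma 5.3 ($c+a\uparrow^k b\leq a\uparrow^k(b+c)$ and $c\cdot(a\uparrow^k b)\leq a\uparrow^k(cb)$) together with $a\uparrow^k Y\geq a^Y$, that $E_i$ is bounded above by a stack of $s_i$ copies of $a\uparrow^k$ applied to $M$. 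The left-absorbing property (Lemma 5.3(4)) then collapses this stack into $a\uparrow^{k+1}(\sigma+n_{i+1}+s_i-1)=a\uparrow^{k+1}(\sigma+n_i-1)$, closing the induction; taking $i=1$ yields the desired bound.

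The main technical obstacle is the case $\tilde{c}_i=\tilde{d}_i=a$ (with $s_i=3$), where one must verify $aZ+a\leq a\uparrow^k(a\uparrow^k Z)$ for $Z:=a\uparrow^k M\geq a$. This holds via the chain
\[
aZ+a\leq a^{Z+1}\leq a^{a\uparrow^k Z}\leq a\uparrow^k(a\uparrow^k Z),
\]
where the three inequalities use $a^Z\geq Z+1$ (from $a,Z\geq 2$), $a\uparrow^k Z\geq Z+1$, and $a\uparrow^k Y\geq a^Y$, respectively. The other cases ($s_i=1$ or $2$) are analogous but require fewer absorption steps.
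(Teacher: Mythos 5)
Your proof is correct and follows essentially the same route as the paper's: both absorb each multiplication by $c_i$ and addition of $d_i$ into extra arrow layers (the paper converts them to $c_i\uparrow$ and $d_i\uparrow$, you to copies of $a\uparrow^k$), count exactly $n$ such layers, and collapse the resulting uniform stack with the left-absorbing property; your descending induction is just a more carefully bookkept version of the paper's one-shot conversion. The only (harmless) difference is that your base case needs $\sigma\geq 2$ or the convention $a\uparrow^{k+1}0=1$, a restriction satisfied in every application of the lemma in the paper.
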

\begin{proof}
We make the following observation:
\begin{enumerate}
\item If $c=1$ and $d \geq 1$, then $c\sigma+d \leq \max\{2,d\}\uparrow\sigma$;
\item If $c \geq 2, d \geq 1$, then $c\sigma+d \leq \max\{2,d\}\uparrow c\uparrow \sigma.$
\end{enumerate}
Thus we can convert $+d$ into $d\uparrow$ and $c\cdot$ into $c\uparrow$. We get a chain of Knuth arrows such that each arrow is at most $\uparrow^k$, each entry is at most $a \geq 2$, and the number of the terms before $\sigma$ is just $n$. Thus the above expression is at most the following chain with $n$ many $a$'s:
$$a\uparrow^k a \uparrow^k \ldots a \uparrow^k\sigma,$$
which is smaller than
$$a\uparrow^k a \uparrow^k \ldots a \uparrow^{k+1}\sigma=a\uparrow^{k+1}(\sigma+n-1).$$
\end{proof}
For example,
$$4\uparrow\uparrow2\uparrow2\uparrow^53\leq \max\{2,4\}\uparrow^{(5+1)}(3+2)=4\uparrow^65.$$

\begin{notations}
We mark the functions in Definition \ref{4.12} by their index, which is the order they appear in the induction process. We arrange the functions in the following order:
$$A_2,\prescript{2}{}{\psi}_1,\prescript{2}{}{\phi}_1,B_2;$$
$$A_3,\prescript{3}{}{\psi}_1,\prescript{3}{}{\phi}_1,\prescript{3}{}{\psi}_2,\prescript{3}{}{\phi}_2,B_3;$$
$$A_4,\prescript{4}{}{\psi}_1,\prescript{4}{}{\phi}_1,\prescript{4}{}{\psi}_2,\prescript{4}{}{\phi}_2,\prescript{4}{}{\psi}_3,\prescript{4}{}{\phi}_3,B_4.$$
For a function $f$ appearing in this sequence, the index of the function is denoted by $\ind(f)$. Also for integers $d \geq 2$, denote $\tilde{d}=d+1 \geq 3$.
\end{notations}
\begin{theorem}
Let $\sigma \geq 2$. Under the notations in Definition \ref{4.12}, for any function $f$ in the sequence, we have
$$f(\sigma) \leq \tilde{d}\uparrow^{\ind(f)}\tilde{d}\uparrow\sigma.$$
In particular, $B_d(\sigma)\leq \tilde{d}\uparrow^{d^2+d-2}\tilde{d}\uparrow\sigma$.
\end{theorem}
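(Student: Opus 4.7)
The plan is to prove the bound $f(\sigma)\leq \tilde d\uparrow^{\ind(f)}\tilde d\uparrow\sigma$ by strong induction on $\ind(f)$ along the ordered sequence in Notations 5.5, with each inductive step reduced to a mechanical application of Lemma 5.4 (the ``absorb constants and lower-arrow levels into one more arrow'') together with the left absorbing identity from Lemma 5.3(4). The base case $\ind(f)=1$ is $A_2(\sigma)=4\sigma+2$, which is bounded by $3\uparrow3\uparrow\sigma=3^{3^\sigma}$ for all $\sigma\geq 2$. Throughout I would exploit the slack on the right coming from the extra factor $\tilde d\uparrow\sigma=\tilde d^{\sigma}$; this factor always dominates any low-order term of the form $\sigma+C$ or $C\sigma+C$ appearing after Lemma 5.4 is applied, as long as $\tilde d\geq 3$ and $\sigma\geq 2$.

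For the inductive step within a single level $d$, I would treat four sub-cases according to the defining recursions in Notations 4.12. First, for $\prescript{d}{}{\psi}_1(\sigma)=2A_d(\tilde d\sigma)$, the inductive hypothesis on $A_d$ plus Lemma 5.4 immediately gives a bound of the form $\tilde d\uparrow^{\ind(A_d)+1}(\sigma+c)\leq\tilde d\uparrow^{\ind(\psi_1)}\tilde d\uparrow\sigma$. Second, for $\prescript{d}{}{\psi}_i(\sigma)=2A_d(\tilde d\cdot \prescript{d}{}{\phi}_{i-1}(\sigma))$ with $i\geq 2$, I would substitute both inductive hypotheses and again invoke Lemma 5.4; the maximal arrow level in the resulting nested expression is $\ind(\phi_{i-1})$, so Lemma 5.4 yields $\tilde d\uparrow^{\ind(\phi_{i-1})+1}(\sigma+c)$. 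Third, for $\prescript{d}{}{\phi}_i(\sigma)=\prescript{d}{}{\psi}_i^{\sigma}(\sigma)$, I would iterate the inductive bound $\sigma$ times: each application prepends a block $\tilde d\uparrow^{k}\tilde d\uparrow$ (with $k=\ind(\psi_i)$), which after replacing $\tilde d\uparrow$ by $\tilde d\uparrow^{k}$ using Lemma 5.3(2) gives a tower of $\sigma$ copies of $\tilde d\uparrow^{k}$ applied to $\sigma$; the left absorbing property then collapses this to $\tilde d\uparrow^{k+1}(2\sigma-1)\leq\tilde d\uparrow^{\ind(\phi_i)}\tilde d\uparrow\sigma$. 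Fourth, $B_d=\prescript{d}{}{\phi}_1\circ\cdots\circ\prescript{d}{}{\phi}_{d-1}$ is handled by composing the bounds: each $\prescript{d}{}{\phi}_j$ prepends $\tilde d\uparrow^{\ind(\phi_j)}\tilde d\uparrow$, all $\ind(\phi_j)$ are $\leq\ind(\phi_{d-1})$, so the composition lies in a chain of $\tilde d\uparrow^{\ind(\phi_{d-1})}$-arrows, which left-absorbs to $\tilde d\uparrow^{\ind(B_d)}\tilde d\uparrow\sigma$.

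The level-transition case, $A_{d+1}(\sigma)=B_d(d^{2^{B_d(3\sigma+4)}})$, is where the base changes from $\tilde d=d+1$ to $\widetilde{d+1}=d+2$. I would apply the inductive bound for $B_d$ twice, write the resulting nested expression with arrows $\uparrow^{d^2+d-2},\uparrow,\uparrow,\uparrow,\uparrow^{d^2+d-2},\uparrow$ and bases $\tilde d,\tilde d,d,2,\tilde d,\tilde d$ and constants $3,4$ innermost, then upper-bound every base by $\widetilde{d+1}=d+2\geq 4$ and invoke Lemma 5.4. This gives $\widetilde{d+1}\uparrow^{d^2+d-1}(\sigma+7)\leq \widetilde{d+1}\uparrow^{\ind(A_{d+1})}\widetilde{d+1}\uparrow\sigma$, since $\ind(A_{d+1})=\ind(B_d)+1=d^2+d-1$ and $(d+2)^\sigma\geq \sigma+7$ for $\sigma\geq 2$. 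The specialization to $B_d$ is the ``In particular'' statement, since $\ind(B_d)=d^2+d-2$. I expect the main obstacle to be bookkeeping: verifying the index arithmetic against the sequence in Notations 5.5, and carefully listing the $(a_i,k_i,c_i,d_i)$ data at the level transition so that Lemma 5.4 is applied correctly and the resulting additive fudge term is absorbed by $\tilde d\uparrow\sigma$; the inductive content itself is light once Lemma 5.4 and left absorbing are in hand.
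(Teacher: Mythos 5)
Your proposal follows essentially the same route as the paper's proof: induction along the indexed sequence of Notations 5.5, with Lemma 5.4 absorbing constants and multiplicative factors into one higher arrow level at each recursion step, the left absorbing property handling the $\sigma$-fold iteration defining $\prescript{d}{}{\phi}_i$ and the composition defining $B_d$, and the same index arithmetic $\ind(B_d)=d^2+d-2$ at the level transition. The only deviations are harmless bookkeeping constants (e.g.\ your $2\sigma-1$ versus the paper's $3\sigma$ for the $\prescript{d}{}{\phi}_i$ step), all comfortably dominated by $\tilde d\uparrow\sigma$.
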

\begin{proof}
We prove by induction. We will apply the inequalities for $\sigma \geq 2$, which are easy to verify: $\sigma+5 \leq \sigma+7 \leq 3\uparrow\sigma$, $3\sigma+4 \leq 4\sigma+4\leq 3\uparrow3\uparrow\sigma$.

The base case: $A_2(\sigma)=4\sigma+4\leq 3\uparrow3\uparrow\sigma.$

The induction steps from $A_d$ to $\prescript{d}{}{\psi}_1(\sigma)$: we have $\prescript{d}{}{\psi}_1(\sigma)=2A_d((d+1)
\sigma)$. So
$$\prescript{d}{}{\psi}_1(\sigma)=2A_d((d+1)
\sigma)\leq 2\uparrow \tilde{d}\uparrow^{\ind(A_d)}\tilde{d}\uparrow\tilde{d}\sigma\leq \tilde{d}\uparrow^{\ind(A_d)+1}(\sigma+3)$$
$$\leq \tilde{d}\uparrow^{\ind(A_d)+1}3\uparrow\sigma\leq \tilde{d}\uparrow^{\ind(\prescript{d}{}{\psi}_1)}\tilde{d}\uparrow\sigma.$$

The induction steps from $\prescript{d}{}{\psi}_i(\sigma)$ to $\prescript{d}{}{\phi}_i(\sigma)=\prescript{d}{}{\psi}^{\sigma}_i(\sigma)$: since $\prescript{d}{}{\psi}_i(\sigma)\leq\tilde{d}\uparrow^{\ind(\prescript{d}{}{\psi}_i)}\tilde{d}\uparrow\sigma$, we have
$$\prescript{d}{}{\phi}_i(\sigma)\leq\tilde{d}\uparrow^{\ind(\prescript{d}{}{\psi}_i)}\tilde{d}\uparrow\ldots\tilde{d}\uparrow^{\ind(\prescript{d}{}{\psi}_i)}\tilde{d}\uparrow\sigma,$$
where the symbol $\tilde{d}\uparrow^{\ind(\prescript{d}{}{\psi}_i)}\tilde{d}\uparrow$ is repeated $\sigma$ times. Therefore,
$$\prescript{d}{}{\phi}_i(\sigma)\leq\tilde{d}\uparrow^{\ind(\prescript{d}{}{\psi}_i)+1}(3\sigma)\leq\tilde{d}\uparrow^{\ind(\prescript{d}{}{\psi}_i)+1}3\uparrow\sigma\leq\tilde{d}\uparrow^{\ind(\prescript{d}{}{\phi}_i)}\tilde{d}\uparrow\sigma.$$

The induction steps from $\prescript{d}{}{\phi}_i(\sigma)$ to $\prescript{d}{}{\psi}_{i+1}(\sigma)$:
$$\prescript{d}{}{\psi}_{i+1}(\sigma)=2A_d((d+1)\prescript{d}{}{\phi}_i(\sigma))\leq 2\cdot \tilde{d}\uparrow^{\ind(A_d)}\tilde{d}\uparrow((d+1)\tilde{d}\uparrow^{\ind(\prescript{d}{}{\phi}_i)}\tilde{d}\uparrow\sigma).$$
We have $\ind(A_d)\leq \ind(\prescript{d}{}{\phi}_i)$. There are $6$ terms on the right side of the equation with five $\tilde{d}=d+1$'s, and one $2$. Therefore, 
$$\prescript{d}{}{\psi}_{i+1} \leq \tilde{d}\uparrow^{\ind(\prescript{d}{}{\phi}_i)+1}(\sigma+5)\leq \tilde{d}\uparrow^{\ind(\prescript{d}{}{\phi}_i)+1}3\uparrow\sigma\leq\tilde{d}\uparrow^{\ind(\prescript{d}{}{\psi}_{i+1})}\tilde{d}\uparrow\sigma.$$

The induction steps from $\prescript{d}{}{\phi}_i(\sigma),1\leq i\leq d-1$ to $B_d(\sigma)=\prescript{d}{}{\phi}_1\prescript{d}{}{\phi}_2\ldots\prescript{d}{}{\phi}_{d-1}(\sigma)$:
$$B_d(\sigma)\leq \tilde{d}\uparrow^{\ind(\prescript{d}{}{\phi}_1)}\tilde{d}\uparrow\tilde{d}\uparrow^{\ind(\prescript{d}{}{\phi}_2)}\tilde{d}\uparrow\ldots\tilde{d}\uparrow^{\ind(\prescript{d}{}{\phi}_{d-1})}\tilde{d}\uparrow\sigma,$$
where the symbol $\tilde{d}\uparrow^{\ind(\prescript{d}{}{\phi}_i)}\tilde{d}\uparrow$ is repeated by $d-1$ times, and for any $1 \leq i \leq d-1$, $\ind(\prescript{d}{}{\phi}_i)\leq \ind(\prescript{d}{}{\phi}_{d-1})$. Therefore,
$$B_d(\sigma)\leq \tilde{d}\uparrow^{\ind(\prescript{d}{}{\phi}_{d-1})+1}(2(d-1)+\sigma). 
$$
From $\sigma \geq 2$ we get $(\sigma-2)(d-1) \geq 0$, so $2(d-1)+\sigma \leq \sigma d$, so
$$
B_d(\sigma)\leq\tilde{d}\uparrow^{\ind(\prescript{d}{}{\phi}_{d-1})+1}(d\sigma) \leq\tilde{d}\uparrow^{\ind(\prescript{d}{}{\phi}_{d-1})+1}d\uparrow\sigma\leq\tilde{d}\uparrow^{\ind(B_d)}\tilde{d}\uparrow\sigma.$$

The induction steps from $B_{d-1}(\sigma)$ to $A_d(\sigma)=\sigma+B_{d-1}((d-1)\uparrow2\uparrow B_{d-1}(3\sigma+4))$:
\begin{align*}
A_d(\sigma)\leq \sigma+\tilde{d}\uparrow^{\ind(B_{d-1})}\tilde{d}\uparrow (d-1)\uparrow2\uparrow\tilde{d}\uparrow^{\ind(B_{d-1})}\tilde{d}\uparrow(3\sigma+4)\\
\leq \tilde{d}\uparrow^{\ind(B_{d-1})}\tilde{d}\uparrow (d-1)\uparrow2\uparrow\tilde{d}\uparrow^{\ind(B_{d-1})}\tilde{d}\uparrow(4\sigma+4)\\
\leq \tilde{d}\uparrow^{\ind(B_{d-1})}\tilde{d}\uparrow (d-1)\uparrow2\uparrow\tilde{d}\uparrow^{\ind(B_{d-1})}\tilde{d}\uparrow3\uparrow3\uparrow\sigma.
\end{align*}
Here we use $\widetilde{d-1}\leq \tilde{d}$. We have $2,3,d-1\leq \tilde{d}$, and there are $8$ terms except for $\sigma$; they are $\tilde{d},\tilde{d},d-1,2,\tilde{d},\tilde{d},3,3$. So
$$A_d(\sigma)\leq \tilde{d}\uparrow^{\ind(B_{d-1})+1}(\sigma+7)\leq\tilde{d}\uparrow^{\ind(B_{d-1})+1}(3\uparrow\sigma)\leq  \tilde{d}\uparrow^{\ind(A_d)}\tilde{d}\uparrow\sigma.$$
So the first claim is proved. The index of $B_d$ is $4+6+\ldots+2d=2(d(d+1)/2)-2=d^2+d-2$, so by the first claim, $B_d(\sigma)\leq \tilde{d}\uparrow^{d^2+d-2}\tilde{d}\uparrow\sigma$.
\end{proof}
Using the same proof in the induction process from $A_d$ to $B_d$, we get:
\begin{theorem}\label{5.7}
Let $f:\mathbb{Z}_{\geq 2} \to \mathbb{Z}_{\geq 2}$, $d_1 \in \mathbb{Z}$, $d_1 \geq \tilde{d}$. If $f(\sigma)\leq d_1\uparrow^k\ d_1\uparrow\sigma$, then $\mathcal{H}_{3,d}(f) \leq d_1\uparrow^{k+\ind(B_d)-\ind(A_d)}d_1\uparrow\sigma=d_1\uparrow^{k+2d-1}d_1\uparrow\sigma$.    
\end{theorem}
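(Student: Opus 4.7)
The plan is to copy the inductive bookkeeping of Theorem 5.6 verbatim, with $f$ in place of $A_d$ and with $d_1$ in place of $\tilde{d}$ as the common base. Recall that $\mathcal{H}_{3,d}(f)$ is obtained by first building auxiliary functions $\prescript{d}{}{\psi}_i$ and $\prescript{d}{}{\phi}_i(\sigma)=\prescript{d}{}{\psi}^\sigma_i(\sigma)$ from $f$ via the recursions in Definition 4.14, and then setting $\mathcal{H}_{3,d}(f)(\sigma)=\prescript{d}{}{\phi}_1\circ\cdots\circ\prescript{d}{}{\phi}_{d-1}(\sigma)$. The equality $\ind(B_d)-\ind(A_d)=2d-1$ in the statement counts precisely the "outer-arrow promotions" encountered when walking along the sequence $f,\prescript{d}{}{\psi}_1,\prescript{d}{}{\phi}_1,\prescript{d}{}{\psi}_2,\prescript{d}{}{\phi}_2,\ldots,\prescript{d}{}{\psi}_{d-1},\prescript{d}{}{\phi}_{d-1},\mathcal{H}_{3,d}(f)$, namely $1+(d-1)+(d-2)+1=2d-1$.

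Concretely, I would prove by induction along this sequence that every function $g$ occurring at position $j$ (with $f$ at position $0$) satisfies
\begin{equation*}
g(\sigma)\leq d_1\uparrow^{k+j}d_1\uparrow\sigma.
\end{equation*}
There are three move types. The step $f\rightsquigarrow\prescript{d}{}{\psi}_1$ uses $\prescript{d}{}{\psi}_1(\sigma)=2f((d+1)\sigma)\leq 2\cdot d_1\uparrow^k d_1\uparrow(d_1\sigma)$, which by the estimation Lemma 5.4 is at most $d_1\uparrow^{k+1}d_1\uparrow\sigma$. The step $\prescript{d}{}{\psi}_i\rightsquigarrow\prescript{d}{}{\phi}_i$ iterates the previous bound $\sigma$ times and invokes the left-absorbing property of Knuth arrows (Lemma 5.3(4)) to promote the outer index once. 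The step $\prescript{d}{}{\phi}_i\rightsquigarrow\prescript{d}{}{\psi}_{i+1}$ uses $\prescript{d}{}{\psi}_{i+1}(\sigma)=2f((d+1)\prescript{d}{}{\phi}_i(\sigma))$ together with the inductive bound on $\prescript{d}{}{\phi}_i$, again absorbed by Lemma 5.4 into one promotion. Finally, the composition $\prescript{d}{}{\phi}_1\circ\cdots\circ\prescript{d}{}{\phi}_{d-1}$ is bounded by the same argument used in the $\prescript{d}{}{\phi}_i\rightsquigarrow B_d$ passage of Theorem 5.6, yielding the last promotion to $d_1\uparrow^{k+2d-1}d_1\uparrow\sigma$.

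The only substantive hypothesis required for this to go through is $d_1\geq\tilde{d}=d+1$, which ensures that every constant appearing inside the recursions, namely $2$, $3$, and $d+1$, is dominated by the base $d_1$. Once this is in place, Lemma 5.4 applies at every step with $a=d_1$ and with $k$ the current outer-arrow count, producing exactly one $+1$ promotion per step. The main, indeed the only, obstacle is therefore purely bookkeeping: verifying that the additive constants introduced at each stage (of the form $\sigma\mapsto \sigma+c$ with $c$ a small integer) are all absorbed into the final tail $d_1\uparrow\sigma$, and that the count of promotions sums to $2d-1$. Both checks are immediate from the corresponding calculations already carried out in the proof of Theorem 5.6 with $f=A_d$ and $d_1=\tilde{d}$, so no new arithmetic is needed.
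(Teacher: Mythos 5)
Your proposal is correct and takes exactly the paper's route: the paper's entire proof is the remark that one reruns the induction of Theorem 5.6 from $A_d$ to $B_d$ with $f$ in place of $A_d$ and $d_1 \geq \tilde{d}$ as the base, and your tally of the promotions, $1+(d-1)+(d-2)+1=2d-1=\ind(B_d)-\ind(A_d)$, together with the use of the estimation lemma at each step, is precisely that argument spelled out.
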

Now we gather all the inequalities above to get
$$B_{\mathcal{P}_\alpha,d}(3,\sigma\tilde{e}_d)\leq B_{\mathcal{P}_\beta,d}(3,\sigma\tilde{e}_d) \leq B_{\mathcal{P}_\gamma,d}(3,\sigma\tilde{e}_d) \leq \hat{B}_{\mathcal{P}_\zeta,d}(\sigma)\leq B_{\mathcal{P}_\theta,d}(\sigma) \leq \tilde{d}\uparrow^{d^2+d-2}\tilde{d}\uparrow\sigma.$$
Therefore, we have:
\begin{theorem}
Let $d \in \mathbb{Z}_{\geq 2}$, $\tilde{d}=d+1$, $\delta \in E_d$ be a dimension sequence. Assume $\sigma=\max\{2,\delta_i:1\leq i \leq d\}$. Then for any $V$ with $\underline{\dim}V=\delta$, $V$ lies inside a $K$-algebra generated by an $R_3$-sequence of forms of degree at most $d$ and length at most $\tilde{d}\uparrow^{d^2+d-2}\tilde{d}\uparrow\sigma$.    
\end{theorem}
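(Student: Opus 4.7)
The plan is to assemble the chain of inequalities developed throughout Sections~3, 4, and~5 and combine it with the defining property $PB_d$ of the AH-package $\mathcal{P}_\alpha$. Since the package $\mathcal{P}_\alpha$ was shown to be an AH-package, the function $B_{\mathcal{P}_\alpha,d}$ satisfies $PB_d$: for any $V$ with $\underline{\dim}V=\delta$, the space $V$ lies inside a $K$-algebra generated by an $R_3$-sequence of forms of degree at most $d$ of length at most $B_{\mathcal{P}_\alpha,d}(3,\delta)$. So it is enough to bound $B_{\mathcal{P}_\alpha,d}(3,\delta)$ by $\tilde{d}\uparrow^{d^2+d-2}\tilde{d}\uparrow\sigma$.

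The first step is monotonicity. Since $\sigma=\max\{2,\delta_i : 1\leq i\leq d\}$, we have $\delta\leq \sigma\tilde{e}_d$ coordinate-wise. All functions appearing in our packages are ascending by Settings~2.10, so
\[
B_{\mathcal{P}_\alpha,d}(3,\delta)\leq B_{\mathcal{P}_\alpha,d}(3,\sigma\tilde{e}_d).
\]
The second step is to assemble the chain of inequalities already established: Theorem~4.1 identifies $B_{\mathcal{P}_\beta,d}$ with the corresponding entry of $\mathcal{P}_\alpha$; Theorem~4.2 gives $B_{\mathcal{P}_\beta,d}(3,\cdot)\leq B_{\mathcal{P}_\gamma,d}(\cdot)$; Theorem~4.7 yields $B_{\mathcal{P}_\gamma,d}(\sigma\tilde{e}_d)\leq \hat{B}_{\mathcal{P}_\zeta,d}(\sigma)$; Theorem~4.14 together with Proposition~4.11 gives $\hat{B}_{\mathcal{P}_\zeta,d}(\sigma)\leq B_{\mathcal{P}_\theta,d}(\sigma)$; and finally Theorem~5.5 bounds $B_{\mathcal{P}_\theta,d}(\sigma)$ by $\tilde{d}\uparrow^{d^2+d-2}\tilde{d}\uparrow\sigma$.

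Concatenating these estimates produces
\[
B_{\mathcal{P}_\alpha,d}(3,\delta)\leq B_{\mathcal{P}_\alpha,d}(3,\sigma\tilde{e}_d)\leq B_{\mathcal{P}_\gamma,d}(\sigma\tilde{e}_d)\leq \hat{B}_{\mathcal{P}_\zeta,d}(\sigma)\leq B_{\mathcal{P}_\theta,d}(\sigma)\leq \tilde{d}\uparrow^{d^2+d-2}\tilde{d}\uparrow\sigma.
\]
Applying property $PB_d$ to the leftmost quantity then immediately yields the conclusion. There is no real obstacle remaining: all the hard work, namely the verification that $\mathcal{P}_\alpha$ is AH, the passage through the reduction packages $\mathcal{P}_\beta,\mathcal{P}_\gamma,\mathcal{P}_\zeta,\mathcal{P}_\theta$, and the final Knuth-arrow estimate via the rough comparison lemma, has already been carried out in earlier sections. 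The only subtlety worth pointing out is the reduction from an arbitrary $\delta$ to the ``uniform'' vector $\sigma\tilde{e}_d$, which is justified by the ascending property of $B_{\mathcal{P}_\alpha,d}$ and is precisely why the statement is phrased in terms of $\sigma=\max\{2,\delta_i\}$ rather than~$|\delta|$.
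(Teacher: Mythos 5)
Your proposal is correct and follows essentially the same route as the paper: the paper's own proof consists precisely of concatenating the comparison results for $\mathcal{P}_\alpha \to \mathcal{P}_\beta \to \mathcal{P}_\gamma \to \mathcal{P}_\zeta \to \mathcal{P}_\theta$ with the Knuth-arrow estimate for $B_{\mathcal{P}_\theta,d}$ and then invoking property $PB_d$ at $\eta=3$ for the AH-package $\mathcal{P}_\alpha$, with the reduction from $\delta$ to $\sigma\tilde{e}_d$ handled by the ascending property exactly as you describe. Only your internal theorem numbers are slightly off from the paper's, but the results you cite are the intended ones.
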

The bound on the projective dimension is an immediate corollary of the above theorem.
\begin{theorem}\label{5.9}
Let $d \in \mathbb{Z}_{\geq 2}$, $\tilde{d}=d+1$, $\delta \in E_d$ be a dimension sequence. Assume $\sigma=\max\{2,\delta_i:1\leq i \leq d\}$, and $V$ is a graded vector space with $\underline{\dim}V=\delta$. Let $I$ be the $R$-ideal generated by $V$, then $\pd(R/I) \leq \tilde{d}\uparrow^{d^2+d-2}\tilde{d}\uparrow\sigma$.    
\end{theorem}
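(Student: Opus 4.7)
The plan is to derive this statement directly from Theorem 5.8, which already bounds the length of a containing $R_3$-sequence. The strategy is the classical reduction: find a regular-sequence subalgebra of polynomial type containing $V$, and then use faithful flatness to transfer projective dimension from the smaller polynomial ring to $R$.

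First, I would apply Theorem 5.8 to obtain homogeneous forms $G_1, \dots, G_m \in R^h$ of degree at most $d$ forming an $R_3$-sequence, with $m \leq \tilde{d}\uparrow^{d^2+d-2}\tilde{d}\uparrow\sigma$, such that $V$ lies inside the $K$-subalgebra $S := K[G_1,\dots,G_m]$. By the remark following Notations 2.1 (an $R_\eta$-sequence in $R^h$ is a prime sequence for $\eta\geq 1$), the $G_i$ form in particular a regular sequence, and $S$ is isomorphic as a graded $K$-algebra to a polynomial ring in $m$ variables.

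Next I would establish that $R$ is free (hence faithfully flat) over $S$. Since the $G_i$ are algebraically independent, one can extend $G_1, \dots, G_m$ to a homogeneous system of parameters $G_1, \dots, G_N$ of $R$ (using general linear forms, as done in the proof of Lemma 3.5). Then $R$ is a free module over the polynomial ring $K[G_1,\dots,G_N]$, and $K[G_1,\dots,G_N]$ is in turn free over $S$, giving the desired flatness.

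Finally, since $V \subset S$, the ideal $I = VR$ is the extension $(VS)R$ of the ideal $VS$ of $S$. Flatness of $R$ over $S$ implies that a minimal free resolution of $S/VS$ over $S$ remains exact after tensoring with $R$, producing a free resolution of $R/I$ of the same length. Therefore
\[
\pd_R(R/I) \leq \pd_S(S/VS) \leq m \leq \tilde{d}\uparrow^{d^2+d-2}\tilde{d}\uparrow\sigma,
\]
where the middle inequality is the Hilbert syzygy theorem applied in the polynomial ring $S$ with $m$ variables. Since all the combinatorial and $R_\eta$-sequence work has been performed in the previous sections, there is no real obstacle here; the only point to verify carefully is the faithful flatness of $R$ over $S$, which is standard once the $G_i$ are identified as part of a system of parameters.
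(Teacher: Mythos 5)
Your proposal is correct and follows essentially the same route as the paper: apply Theorem 5.8 to place $V$ inside the subalgebra $S$ generated by an $R_3$-sequence of length at most $\tilde{d}\uparrow^{d^2+d-2}\tilde{d}\uparrow\sigma$, observe that $S$ is a graded polynomial ring with $S\to R$ flat, and conclude $\pd_R(R/I)=\pd_S(S/VS)\leq B$. The paper's proof is terser (it simply asserts flatness of $S\to R$), while you spell out the freeness via extension to a system of parameters exactly as in the proof of Lemma 3.5, so there is no substantive difference.
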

\begin{proof}
Let $B=\tilde{d}\uparrow^{d^2+d-2}\tilde{d}\uparrow\sigma$ and $x_1,\ldots,x_B$ be the $R_3$-sequence such that $V \subset K[x_1,\ldots,x_B]=S$. Since $x_1,\ldots,x_B$ is a regular sequence, $S$ is isomorphic to a graded polynomial ring, and the map $S \to R$ is flat and graded. Now by flatness we have $B \geq \pd(S/VS)=\pd(R/VS\cdot R)=\pd(R/I)$.   
\end{proof}

\section{Other related bounds}
In this section, we will use the bound in Section 5 to derive bounds on functions satisfying other properties. Denote $\tilde{d}=d+1$.
\begin{theorem}\label{6.1}
Assume $d \geq 2$ and $D_d(\sigma)=d\uparrow2\uparrow \tilde{d}\uparrow^{d^2+d-2}\tilde{d}\uparrow\sigma$. Then $D_d \geq D_{\mathcal{P}_\alpha,d}(\sigma)$, so $D_d$ satisfies $PD_d$. That is, for any $R$-ideal $I$ generated by a regular sequence of length at most $\sigma$ and degree at most $d$, assume $P$ is a minimal prime over $I$, then $P$ is generated by at most $D_d(\sigma)$ elements.   
\end{theorem}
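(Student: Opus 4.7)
The plan is to chain together the bounds that have already been established and unpack the definition of $D_{\mathcal{P}_\alpha,d}$. By the recurrence relation $\mathcal{R}_\alpha$, we have $D_{\mathcal{P}_\alpha,d} = \mathcal{F}_{5,d}(\mathcal{B}_{\mathcal{P}_\alpha,d})$, that is,
$$D_{\mathcal{P}_\alpha,d}(\sigma) = d^{2^{\mathcal{B}_{\mathcal{P}_\alpha,d}(1,\sigma)}}.$$
Similarly, from $\mathcal{B}_{\mathcal{P}_\alpha,d} = \mathcal{F}_{4,d}(B_{\mathcal{P}_\alpha,d})$ we get $\mathcal{B}_{\mathcal{P}_\alpha,d}(1,\sigma) = B_{\mathcal{P}_\alpha,d}(1,\sigma\tilde{e}_d)$. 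Hence the whole task reduces to bounding $B_{\mathcal{P}_\alpha,d}(1,\sigma\tilde{e}_d)$.

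The bound derived in Section~5 is stated for $\eta=3$, namely $B_{\mathcal{P}_\alpha,d}(3,\sigma\tilde{e}_d) \leq \tilde{d}\uparrow^{d^2+d-2}\tilde{d}\uparrow\sigma$. To pass from $\eta=3$ to $\eta=1$, I invoke the standing convention (Settings~2.10) that all packages are ascending, in particular $B_{\mathcal{P}_\alpha,d}$ is ascending in its first argument, giving $B_{\mathcal{P}_\alpha,d}(1,\sigma\tilde{e}_d) \leq B_{\mathcal{P}_\alpha,d}(3,\sigma\tilde{e}_d)$. Combined with the displayed chain of inequalities at the end of Section~5, this yields
$$\mathcal{B}_{\mathcal{P}_\alpha,d}(1,\sigma) \leq \tilde{d}\uparrow^{d^2+d-2}\tilde{d}\uparrow\sigma.$$

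Substituting into the expression for $D_{\mathcal{P}_\alpha,d}(\sigma)$ and rewriting the iterated exponential in Knuth-arrow notation gives
$$D_{\mathcal{P}_\alpha,d}(\sigma) = d^{2^{\mathcal{B}_{\mathcal{P}_\alpha,d}(1,\sigma)}} \leq d\uparrow 2 \uparrow \tilde{d}\uparrow^{d^2+d-2}\tilde{d}\uparrow\sigma = D_d(\sigma),$$
so $D_d \geq D_{\mathcal{P}_\alpha,d}$. Finally, the theorem of Section~3 asserts that $\mathcal{P}_\alpha$ is an AH-package, so $D_{\mathcal{P}_\alpha,d}$ already satisfies $PD_d$; since $PD_d$ is a property of the form ``an integer is at most $D_d(k)$'', any pointwise-larger function automatically satisfies it as well. (This is precisely the monotonicity observation used in the proof of Proposition~2.14.) Therefore $D_d$ satisfies $PD_d$, which is the desired conclusion.

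No step here is genuinely hard; the only point that requires care is matching the indexing conventions (the jump from $\eta=3$ to $\eta=1$ and the correct use of the identity $\mathcal{B}_{\mathcal{P}_\alpha,d}(1,\sigma) = B_{\mathcal{P}_\alpha,d}(1,\sigma\tilde{e}_d)$). Once those substitutions are made cleanly, the result is essentially a restatement of Theorem~5.8 after applying the explicit functional $\mathcal{F}_{5,d}$.
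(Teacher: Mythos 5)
Your proposal is correct and follows exactly the paper's argument: unpack $D_{\mathcal{P}_\alpha,d}=\mathcal{F}_{5,d}(\mathcal{F}_{4,d}(B_{\mathcal{P}_\alpha,d}))$, use the ascending property to pass from $\eta=1$ to $\eta=3$, and apply the Knuth-arrow bound on $B_{\mathcal{P}_\alpha,d}(3,\sigma\tilde{e}_d)$ from Section~5, noting that $PD_d$ is preserved under pointwise enlargement. The paper's proof is precisely this chain of inequalities written in one line.
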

\begin{proof}
We have
\begin{align*}
D_{\mathcal{P}_\alpha,d}(\sigma)=d\uparrow2\uparrow (B_{\mathcal{P}_\alpha,d}(1,\sigma\tilde{e}_d))\leq d\uparrow2\uparrow B_{\mathcal{P}_\alpha,d}(3,\sigma\tilde{e}_d)\leq d\uparrow2\uparrow \tilde{d}\uparrow^{d^2+d-2}\tilde{d}\uparrow\sigma.
\end{align*}
\end{proof}
\begin{theorem}\label{6.2}
Assume $\eta \geq 2$. We set $A_2(\eta)=\eta+2$, and for $d \geq 3$, set
$$A_d(\eta)=d\uparrow^{d^2-d-1}3\uparrow\eta.$$
We also define $A_d(1)=A_d(2)$ for any $d\geq 2$. Then $A_d(\eta) \geq A_{\mathcal{P}_\alpha,d}(\eta)$, so $A_d(\eta)$ satisfies $PAa_d$. That is, for any $F \in R_d$ such that $\str(F) \geq A_d(\eta)$, the ring $R/FR$ satisfies Serre's condition $R_\eta$.
\end{theorem}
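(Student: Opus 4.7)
The plan is to establish the pointwise inequality $A_d(\eta) \geq A_{\mathcal{P}_\alpha,d}(\eta)$ for all $d \geq 2$ and $\eta \geq 2$; once this holds, the conclusion that $A_d$ satisfies $PAa_d$ is automatic, because $A_d$ is clearly ascending (Knuth arrows are monotone) and the property $PAa_d$ is preserved under replacement of a bound satisfying it by a pointwise larger ascending function, combined with the fact (established at the end of Section 3) that $A_{\mathcal{P}_\alpha, d}$ already satisfies $PAa_d$ since $\mathcal{P}_\alpha$ is an AH-package.

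For the base case $d = 2$, the recurrence $A_{\mathcal{P}_\alpha, 2}(\eta) = \mathcal{F}_{1,2}(D_1, \mathcal{B}_1)(\eta) = \mathcal{B}_1(3, D_1(\eta + 1)) + 1$, together with the initial condition $\mathcal{I}_\alpha$ (where $\mathcal{B}_1(\eta, n) = n$ and $D_1(k) = k$), gives $A_{\mathcal{P}_\alpha, 2}(\eta) = (\eta+1) + 1 = \eta + 2$, matching the stated formula $A_2(\eta) = \eta + 2$ exactly.

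For $d \geq 3$, I plan to combine the recurrence with the explicit quantitative bounds already proven. Applying $\mathcal{F}_{1,d}$ together with $\mathcal{B}_{d-1}(3, n) = B_{\mathcal{P}_\alpha, d-1}(3, n \tilde{e}_{d-1}) \leq d \uparrow^{d^2 - d - 2} d \uparrow n$ (from Theorem 5.9 applied at degree $d - 1$) and $D_{d-1}(k) \leq (d - 1) \uparrow 2 \uparrow d \uparrow^{d^2 - d - 2} d \uparrow k$ (from Theorem 6.1 applied at degree $d - 1$) yields the nested Knuth-arrow estimate
\[
A_{\mathcal{P}_\alpha, d}(\eta) \leq d \uparrow^{d^2 - d - 2} d \uparrow \bigl[ (d - 1) \uparrow 2 \uparrow d \uparrow^{d^2 - d - 2} d \uparrow (\eta + 1) \bigr] + 1.
\]

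The final step is pure Knuth-arrow bookkeeping via Lemma 5.4. With maximum base $a = d$, maximum arrow exponent $k = d^2 - d - 2 \geq 4$ (this is where we need $d \geq 3$), six distinct bases appearing in the chain, and two nonzero additive shifts (both equal to $1$), the count becomes $n = 8$ and the whole expression collapses to at most $d \uparrow^{d^2 - d - 1}(\eta + 7)$. Since $3 \uparrow \eta = 3^\eta \geq \eta + 7$ for all $\eta \geq 2$ (equality at $\eta = 2$ and faster-than-linear growth afterwards), monotonicity of Knuth arrows gives $A_{\mathcal{P}_\alpha, d}(\eta) \leq d \uparrow^{d^2 - d - 1} 3 \uparrow \eta = A_d(\eta)$. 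The main obstacle is the careful accounting in this last step: one must correctly identify every base, every additive shift, and every arrow level in the nested expression when applying Lemma 5.4, and verify that the hypothesis $k \geq 2$ of that lemma holds, which is precisely what forces the separate treatment of the $d = 2$ case.
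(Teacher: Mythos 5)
Your proposal is correct and follows essentially the same route as the paper: the base case $d=2$ by direct evaluation of the initial condition, and for $d\geq 3$ the recurrence $A_{\mathcal{P}_\alpha,d}(\eta)=\mathcal{B}_{\mathcal{P}_\alpha,d-1}(3,D_{\mathcal{P}_\alpha,d-1}(\eta+1))+1$ combined with the Knuth-arrow bounds for $\mathcal{B}_{d-1}$ and $D_{d-1}$ at degree $d-1$, collapsed via the counting lemma to $d\uparrow^{d^2-d-1}(\eta+7)\leq d\uparrow^{d^2-d-1}3\uparrow\eta$. The only quibble is that the separate treatment of $d=2$ is forced by the degree-$(d-1)$ bounds requiring $d-1\geq 2$ rather than by a hypothesis on the arrow level, but this does not affect the argument.
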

\begin{proof}
For $d=2$, $A_2=A_{\mathcal{P}_\alpha,2}$. Now assume $d \geq 3$, then
\begin{align*}
A_{\mathcal{P}_\alpha,d}(\eta)=\mathcal{F}_{1,d}(\mathcal{B}_{\mathcal{P}_\alpha,d-1},D_{\mathcal{P}_\alpha,d-1})(\eta)\\
=\mathcal{B}_{\mathcal{P}_\alpha,d-1}(3,D_{\mathcal{P}_\alpha,d-1}(\eta+1))+1=B_{\mathcal{P}_\alpha,d-1}(3,D_{\mathcal{P}_\alpha,d-1}(\eta+1)\tilde{e}_d)+1\\
\leq \widetilde{d-1}\uparrow^{d^2-d-2}\widetilde{d-1}\uparrow D_{\mathcal{P}_\alpha,d-1}(\eta+1)+1\\
\leq \widetilde{d-1}\uparrow^{d^2-d-2}\widetilde{d-1}\uparrow(d-1)\uparrow2\uparrow\widetilde{d-1}\uparrow^{d^2-d-2}\widetilde{d-1}\uparrow(\eta+1)+1.
\end{align*}
All the terms except for $\eta$ are no larger than $d$ and there are $8$ such terms, so the above expression is smaller than
$$d\uparrow^{d^2-d-1}(\eta+7)\leq d\uparrow^{d^2-d-1}3\uparrow\eta=A_d(\eta).$$
\end{proof}
\begin{theorem}\label{6.3}
We set $\bar{A}_2(\eta,\delta)=\eta+4|\delta|+1$, and for $d \geq 3$, set $\bar{A}_d(\eta,\delta)=|\delta|-1+d\uparrow^{d^2-d-1}3\uparrow(\eta+3|\delta|)$. Then $\bar{A}_d(\eta,\delta) \geq \bar{A}_{\mathcal{P}_\alpha,d}(\eta,\delta)$, so $\bar{A}_d(\eta,\delta)$ satisfies $PAb_d$. That is, for any graded vector space $V$ with $\underline{\dim}V=\delta$, $\str(V_j)\geq\bar{A}_d(\eta,\delta)$ for $1 \leq j \leq d$ implies that any homogeneous $K$-basis of $V$ is an $R_\eta$-sequence. 
\end{theorem}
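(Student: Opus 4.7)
The plan is to apply the recurrence relation $\bar{A}_d = \mathcal{F}_{2,d}(A_d)$ from the package $\mathcal{P}_\alpha$ and then substitute into it the explicit bound for $A_{\mathcal{P}_\alpha,d}$ already obtained in Theorem 6.2. By definition of $\mathcal{F}_{2,d}$,
\[
\bar{A}_{\mathcal{P}_\alpha,d}(\eta,\delta)=|\delta|-1+A_{\mathcal{P}_\alpha,d}(\eta+3|\delta|),
\]
so everything reduces to a single substitution.

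For the base case $d=2$, I would use the computation $A_{\mathcal{P}_\alpha,2}(k)=k+2$ performed at the start of Section 4 from the initial data $\mathcal{I}_\alpha$. This gives $\bar{A}_{\mathcal{P}_\alpha,2}(\eta,\delta)=|\delta|-1+(\eta+3|\delta|)+2=\eta+4|\delta|+1$, which matches $\bar{A}_2(\eta,\delta)$ on the nose. For $d\geq 3$, Theorem 6.2 yields $A_{\mathcal{P}_\alpha,d}(\eta+3|\delta|)\leq d\uparrow^{d^2-d-1}3\uparrow(\eta+3|\delta|)$, and adding $|\delta|-1$ to both sides gives the desired inequality $\bar{A}_{\mathcal{P}_\alpha,d}(\eta,\delta)\leq \bar{A}_d(\eta,\delta)$.

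Finally, to upgrade this inequality into the statement that $\bar{A}_d$ itself realizes property $PAb_d$, I would invoke the monotonicity observation already used several times in the paper: if an ascending function $f$ satisfies $PAb_d$ and another ascending function $g$ of the same type satisfies $g\geq f$, then $g$ also satisfies $PAb_d$, because requiring strength at least $g$ is a stronger hypothesis than requiring strength at least $f$. Since $\mathcal{P}_\alpha$ is an AH-package we have $\bar{A}_{\mathcal{P}_\alpha,d}\in PAb_d$, and therefore so does $\bar{A}_d$.

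There is no genuine obstacle: the argument is essentially a one-line unwinding of the recurrence, with the real work already absorbed into Theorem 6.2. The only routine verification is that the explicit function $\bar{A}_d(\eta,\delta)$ is ascending in each variable, which follows at once from the monotonicity of Knuth arrows in their base, height, and index, together with the linear dependence on $|\delta|$.
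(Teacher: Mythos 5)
Your proposal is correct and follows essentially the same route as the paper, whose entire proof is the one-liner "apply the ascending functional $\mathcal{F}_{2,d}$ to the inequality $A_d \geq A_{\mathcal{P}_\alpha,d}$ of Theorem 6.2"; you have simply unwound that application explicitly, checked the $d=2$ base case against the initial condition, and invoked the same monotonicity principle (a larger function of the same type inherits property $PAb_d$) that the paper uses throughout.
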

\begin{proof}
Apply the ascending functor $\mathcal{F}_{2,d}$ to the inequality $A_d \geq A_{\mathcal{P}_\alpha,d}$.   
\end{proof}
\begin{theorem}\label{6.4}
Assume $d \geq 2, \eta \geq 4$. We set $d_1=\max\{d+1,\eta-1\}$, $B_d(\eta,\delta)=d_1\uparrow^{d^2+d-1}d_1\uparrow|\delta|$, then $B_d$ satisfies $PB_d$, that is, for any graded vector space $V$ with $\underline{\dim}V=\delta$, $V$ lies in a $K$-algebra generated by an $R_\eta$-sequence of degree at most $d$ and length at most $B_d(\eta,\delta)$.  
\end{theorem}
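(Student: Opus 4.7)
The plan is to show that the claimed formula $d_1 \uparrow^{d^2+d-1} d_1 \uparrow |\delta|$ is a pointwise upper bound for $B_{\mathcal{P}_\alpha,d}(\eta,\delta)$; since $B_{\mathcal{P}_\alpha,d}$ satisfies $PB_d$ (by Theorem 3.9) and $PB_d$ is preserved under passing to a larger function of the same type (as noted in the proof of Proposition 2.12), this suffices. By monotonicity of $B_{\mathcal{P}_\alpha,d}$ in the dimension vector, it is enough to bound $B_{\mathcal{P}_\alpha,d}(\eta, \sigma\tilde{e}_d)$ for $\sigma = |\delta|$.

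First I use Theorems 6.2 and 6.3 to produce an explicit upper bound on $\bar{A}_d(\eta,\delta)$ depending only on $|\delta|$: for $d \geq 3$,
\begin{equation*}
\bar{A}_d(\eta,\delta) \leq |\delta| - 1 + d \uparrow^{d^2-d-1} 3 \uparrow (\eta + 3|\delta|),
\end{equation*}
and the $d=2$ case comes from the explicit formula $\bar{A}_2(\eta,\delta) = \eta + 4|\delta| + 1$. Since $\eta \geq 4$ gives $d_1 \geq 3$, one has $d \leq d_1$, $3 \leq d_1$, and $\eta \leq d_1 + 1$. Applying Lemmas 5.3 and 5.4 to absorb the additive and multiplicative constants into one extra Knuth arrow level, I obtain a strictly increasing $f \colon \mathbb{Z}_+ \to \mathbb{Z}_+$ with $\bar{A}_d(\eta,\delta) \leq f(|\delta|)$ and
\begin{equation*}
f(\sigma) \leq d_1 \uparrow^{d^2-d} d_1 \uparrow \sigma.
\end{equation*}

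Next, by the reduction chain of Section 4 (monotonicity of $\mathcal{G}_{5,d}$ in its input function, together with Proposition 4.14 giving $\mathcal{H}_{1,d}(f) \leq \mathcal{H}_{3,d}(f)$ for strictly increasing $f$), we get
\begin{equation*}
B_{\mathcal{P}_\alpha,d}(\eta, \sigma\tilde{e}_d) \leq \mathcal{G}_{5,d}(2\bar{A}_d(\eta,\cdot))(\sigma) \leq \mathcal{H}_{3,d}(f)(\sigma).
\end{equation*}
Applying Theorem 5.10 with $k = d^2 - d$ then yields
\begin{equation*}
\mathcal{H}_{3,d}(f)(\sigma) \leq d_1 \uparrow^{(d^2-d)+(2d-1)} d_1 \uparrow \sigma = d_1 \uparrow^{d^2+d-1} d_1 \uparrow \sigma,
\end{equation*}
which, combined with the first step, gives the claim.

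The main obstacle is the hypothesis $d_1 \geq \tilde{d} = d+1$ in Theorem 5.10: this holds when $\eta \geq d+2$, but fails precisely in the range $4 \leq \eta \leq d+1$ (which forces $d \geq 3$ and $d_1 = d$). In that subcase I would re-run the Knuth-arrow induction underlying Theorem 5.9 with base $d_1 = d$ in place of $\tilde{d}$, using the weaker bound $(d+1)\sigma \leq 2 d^\sigma \leq d \uparrow (\sigma+1)$ (valid for $d \geq 2$, $\sigma \geq 1$) in place of $(d+1)\sigma \leq \tilde{d} \uparrow \tilde{d}\sigma$ at the $A_d \mapsto \prescript{d}{}{\psi}_1$ step; this costs at most one extra Knuth arrow level, a cost accommodated by the slack between the tighter exponent $d^2+d-2$ of Theorem 5.9 and the target exponent $d^2+d-1$ here. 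All remaining bookkeeping on Knuth arrows is a routine application of Lemma 5.4.
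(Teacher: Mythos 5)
Your main line is the same as the paper's: bound $\bar{A}_d(\eta,\delta)$ by $d_1\uparrow^{d^2-d}d_1\uparrow|\delta|$ using the explicit formulas of Theorem 6.3 and absorbing $\eta$ via $\eta-1\leq d_1$, then apply $\mathcal{H}_{3,d}$ (through the chain $\mathcal{F}_{dec,d}\leq\mathcal{G}_{5,d}\leq\mathcal{H}_{3,d}$) and Theorem 5.10 to add $2d-1$ arrow levels, giving $d_1\uparrow^{d^2+d-1}d_1\uparrow|\delta|$. That part is correct and is exactly the paper's argument; you also correctly observe that Theorem 5.10 as stated requires $d_1\geq\tilde{d}=d+1$, which fails when $4\leq\eta\leq d+1$ (so $d_1=d$, $d\geq3$) --- a hypothesis the paper's own proof silently ignores.

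However, your justification for the patch in that subcase is flawed. There is no ``slack'' available: the target exponent $d^2+d-1$ is exactly $(d^2-d)+(2d-1)$, so if rerunning the induction with base $d$ genuinely cost one extra arrow level per application of $\mathcal{H}_{3,d}$, you would land at $d_1\uparrow^{d^2+d}d_1\uparrow|\delta|$, not the claimed bound. (The exponent $d^2+d-2$ of Theorem 5.9 is the index of $B_d$ in the package $\mathcal{P}_\theta$ built up from degree $2$; it plays no role in this computation and cannot be traded against the exponent here.) Fortunately the patch can be made to cost nothing: for $d=d_1\geq 3$ each step of the induction in Theorem 5.9 still closes with base $d$ because the additive constants produced by Lemma 5.4 after replacing $(d+1)\sigma$ by $d\uparrow(\sigma+1)$ and $(d+1)x$ by $2\cdot d\uparrow x$ (e.g.\ $\sigma+4$, $\sigma+6$, $3\sigma$, $d\sigma$) are all at most $3\uparrow\sigma\leq d\uparrow\sigma$ for $\sigma\geq2$, so the index increments are unchanged and one still gets $\mathcal{H}_{3,d}(f)\leq d\uparrow^{k+2d-1}d\uparrow\sigma$. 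You should either verify this step-by-step accounting or restate Theorem 5.10 with the weaker hypothesis $d_1\geq\max\{3,d\}$; as written, your argument for the range $4\leq\eta\leq d+1$ does not establish the stated exponent.
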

\begin{proof}
If $|\delta|=1$, then it is trivial. We assume $|\delta|\geq 2$, so $4|\delta|\leq 3\uparrow|\delta|$. We see for any $d$,
\begin{align*}
\bar{A}_d(\eta,\delta)\leq d\uparrow^{d^2-d-1}3 \uparrow(\eta+4|\delta|-1)\leq d\uparrow^{d^2-d-1}d_1 \uparrow(\eta+4|\delta|-1)\\
\leq d_1\uparrow^{d^2-d-1}d_1 \uparrow(\eta-1)\uparrow 3\uparrow|\delta|\leq d_1\uparrow^{d^2-d}(|\delta|+3)\leq d_1\uparrow^{d^2-d}d_1\uparrow|\delta|.
\end{align*}
By Lemma \ref{3.11}, if $\bar{A}_d$ satisfies $PAb_d$, then $(\eta,\delta) \to \mathcal{H}_{3,d}(\bar{A}_d(\eta,\cdot))(\delta)$ satisfies $PB_d$. By Theorem \ref{5.7} we have
$$\mathcal{H}_{3,d}(\bar{A}_d(\eta,\cdot))(\delta) \leq d_1\uparrow^{d^2-d+2d-1}d_1\uparrow|\delta|=d_1\uparrow^{d^2+d-1}d_1\uparrow|\delta|.$$
\end{proof}
\begin{theorem}\label{6.5}
Assume $d \geq 2$. Let $M$ be an $R$-module with minimal presentation
$$R^n \xrightarrow[]{\phi} R^m \to M \to 0,$$
where entries of $\phi$ are elements in $R$ which are not necessarily homogeneous and have degree at most $d$. Then $\pd_R(M) \leq \tilde{d}\uparrow^{d^2+d-2}\tilde{d}\uparrow(mn)$.
\end{theorem}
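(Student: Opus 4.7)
The plan is to transport the projective-dimension bound from Theorem~5.10 (which treats cyclic modules $R/I$) to an arbitrary module $M$ by factoring the presentation $\phi$ through a small subalgebra. The only new ingredient is that the entries of $\phi$ need not be homogeneous, so the relevant graded vector space must be built from homogeneous components rather than from the entries themselves.

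For each $1\leq k\leq d$, let $\phi_{ij}^{(k)}\in R_k$ denote the degree-$k$ component of the entry $\phi_{ij}$, let $V_k$ be the $K$-span of $\{\phi_{ij}^{(k)}\}_{1\leq i\leq m,\,1\leq j\leq n}$, and set $V=\bigoplus_{k=1}^{d}V_k$. Every entry $\phi_{ij}$ is the sum of its homogeneous components and therefore lies in $V$ as an ungraded subspace of $R$. Each $V_k$ is spanned by at most $mn$ elements, so the dimension vector $\delta$ of $V$ satisfies $\delta_k\leq mn$ for all $k$ and $\deg V\leq d$. Consequently the parameter $\sigma=\max\{2,\delta_k:1\leq k\leq d\}$ from Theorem~5.9 is bounded by $\max\{2,mn\}$.

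Assume $mn\geq 2$ (the cases $mn\leq 1$ give $\pd_R(M)\leq 1$ and the claimed bound holds trivially since $\tilde{d}\uparrow^{d^2+d-2}\tilde{d}\geq \tilde{d}\geq 3$). By Theorem~5.9 applied to $V$ there is an $R_3$-sequence $x_1,\ldots,x_B$ in $R$ of length
\[
B\leq \tilde{d}\uparrow^{d^2+d-2}\tilde{d}\uparrow\sigma\leq \tilde{d}\uparrow^{d^2+d-2}\tilde{d}\uparrow(mn)
\]
such that $V\subset S:=K[x_1,\ldots,x_B]$. Because every entry of $\phi$ lies in $S$, there is a map $\phi_0\colon S^n\to S^m$ with $\phi=\phi_0\otimes_S R$, so $M=\operatorname{coker}(\phi_0)\otimes_S R$. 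Exactly as in the proof of Theorem~5.10, the regular sequence property makes $S$ into a graded polynomial ring in the $B$ variables $x_1,\ldots,x_B$ and renders $S\hookrightarrow R$ flat, hence
\[
\pd_R(M)\leq \pd_S(\operatorname{coker}(\phi_0))\leq B\leq \tilde{d}\uparrow^{d^2+d-2}\tilde{d}\uparrow(mn),
\]
using that every module over the regular ring $S$ has projective dimension at most $\dim S=B$.

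The proof essentially mirrors Theorem~5.10; the only mildly delicate point is the bound $\delta_k\leq mn$ for each individual $k$, which is what forces the parameter $\sigma$ feeding Theorem~5.9 to be controlled by $mn$ (and not merely by $d\cdot mn$). This is immediate once the $V_k$ are built from homogeneous components of the $mn$ entries. The flatness of $S\hookrightarrow R$ and the universal bound $\pd_S\leq B$ over a polynomial ring then close the argument with no further work.
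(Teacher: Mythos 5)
Your proposal is correct and follows essentially the same route as the paper: decompose the (possibly inhomogeneous) entries of $\phi$ into homogeneous components, span a graded vector space $V$ with $\underline{\dim}V\leq mn\tilde{e}_d$, place $V$ in a polynomial subalgebra $S$ generated by an $R_3$-sequence of length at most $\tilde{d}\uparrow^{d^2+d-2}\tilde{d}\uparrow(mn)$, and descend the presentation to $S$ using flatness of $S\hookrightarrow R$. The only differences are cosmetic (your theorem numbers are shifted by one, and you handle the trivial case as $mn\leq 1$ rather than $m=n=1$).
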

\begin{proof}
Consider the set of homogeneous components of entries of $\phi$. Let $V$ be the graded vector space generated by these elements.  Then in each degree, there are at most $mn$ linearly independent elements, so $\underline{\dim}V \leq mn\tilde{e}_d$. If $m=n=1$, there is nothing to prove; $M=R/fR$ must have projective dimension $1$. Otherwise $mn \geq 2$, so in this case $V$ lies in a $K$-algebra $S$ generated by $R_3$-sequence of forms of length at most $\tilde{d}\uparrow^{d^2+d-2}\tilde{d}\uparrow(mn)$, and $S$ is isomorphic to a graded polynomial ring. In particular, entries of $\phi$ lie in $S$, so there exists a graded $S$-module $N$ with $M=N\otimes_S R$. Since $S \to R$ is a graded flat map, $\pd_R M=\pd_S N \leq \tilde{d}\uparrow^{d^2+d-2}\tilde{d}\uparrow(mn)$.  
\end{proof}
\begin{theorem}\label{6.6}
For $d \geq 2$, there exists function $\Phi_d(h)$ such that $F \in R_d$ with $\str(F) \geq \Phi_d(h)$, the ideal generated by $\{\frac{\partial F}{\partial x_i}\}_{1 \leq i \leq N}$ is not contained in an ideal generated by $h$ forms of degree at most $d-1$. We can take $\Phi_d(h)=h+1$ for $d=2$ and $\Phi_d(h)=d\uparrow^{d^2-d-2}d\uparrow\max\{2,h\}+1$ for $d \geq 3$.   
\end{theorem}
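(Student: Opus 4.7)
The plan is to argue by contradiction: assume the ideal generated by $\mathcal{D}F$ lies inside $I:=(g_1,\ldots,g_h)$ with $\deg g_i\leq d-1$, and deduce $\str(F)<\Phi_d(h)$, contradicting the hypothesis. The strategy is to exhibit an explicit small collapse of $F$, using Euler's identity in the generic characteristic and a Frobenius-extraction argument when the characteristic of $K$ divides $d$.

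For $d=2$, each $g_i$ is linear, so the inclusion $\partial F/\partial x_i\in I$ says every partial is a $K$-linear combination of the $g_j$. In characteristic different from $2$, Euler's identity $2F=\sum_i x_i\partial F/\partial x_i$ immediately places $F\in I$, giving an $h$-collapse and $\str(F)\leq h-1$. In characteristic $2$, after extending $g_1,\ldots,g_h$ to a basis of $R_1$ the non-$g$ part of $F$ is forced onto the diagonal; since $K$ is perfect we extract a square root and write $F=L^2+H$ with $L\in R_1$ and $H\in I$, producing a collapse with $h+1$ linear generators so that $\str(F)\leq h$. In either case $\str(F)\leq h<h+2=\Phi_2(h)$.

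For $d\geq 3$ the first step is to apply Property $PcB_{d-1}$ to $V:=\langle g_1,\ldots,g_h\rangle\subset R_{\leq d-1}$. Since $\dim V\leq\max\{2,h\}$, the explicit bound on $\mathcal{B}_{d-1}$ coming from Theorem 6.5 (equivalently the Knuth-arrow estimate of Section 5 with $d$ replaced by $d-1$) embeds $V$ into a polynomial subring $S=K[y_1,\ldots,y_B]$ whose generators form an $R_3$-sequence of forms of degree at most $d-1$ and satisfy
\[
B\leq \mathcal{B}_{d-1}(3,\max\{2,h\})\leq d\uparrow^{d^2-d-2}d\uparrow\max\{2,h\}=\Phi_d(h).
\]
Consequently $\mathcal{D}F\subset J:=(y_1,\ldots,y_B)R$. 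When $\mathrm{char}\,K\nmid d$, Euler's identity yields $dF\in J$, so $F\in J$, producing a $B$-collapse and $\str(F)\leq B-1<\Phi_d(h)$, the required contradiction.

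The hard part is the case $\mathrm{char}\,K=p$ with $p\mid d$, where Euler's identity is trivial. Here the plan is to use that $J$ is prime (an $R_\eta$-sequence in $R^h$ is a prime sequence, by the remark in Section 2) and that $K$ is perfect: the vanishing of $dF$ modulo $J\cdot\Omega_R$ forces the image $\bar F\in R/J$ to lie in the subring of $p$-th powers of the graded domain $R/J$, so $F=G^p+H$ with $G\in R_{d/p}$ and $H\in J$. This gives $F\in(G,y_1,\ldots,y_B)$, a $(B+1)$-collapse, hence $\str(F)\leq B$; the strict inequality $B<\Phi_d(h)$ needed to close the argument is absorbed by the non-saturation of the Knuth-arrow estimate of Theorem 5.4 at integer inputs (the chain of bounds leading to it contains several rounding steps). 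Alternatively, one may reduce to characteristic zero via the zero-generic initial ideal machinery of Notations 2.15(3) and invoke Euler directly there.
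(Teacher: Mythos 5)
Your proposal is essentially correct in outline, but it takes a more self-contained route than the paper: where you reprove the reduction from the derivative statement to $PcB_{d-1}$ via Euler's identity and a Frobenius argument, the paper simply invokes \cite[Proposition 2.6]{AH1}, which states precisely that $\Phi_d(h)=\mathcal{B}_{d-1}(3,h)+1$ works whenever $\mathcal{B}_{d-1}$ satisfies $PcB_{d-1}$; the only remaining work in the paper is substituting the Knuth-arrow bound for $\mathcal{B}_{d-1}(3,\cdot)$ from Section~5, which is the same substitution you make. So the quantitative parts of the two arguments coincide, and your contribution is an independent proof of the qualitative lemma. What your route buys is transparency about where the characteristic enters; what it costs is that you must now supply the details that \cite{AH1} already carries.

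Two of those details need repair. First, in the case $\mathrm{char}\,K=p\mid d$, the implication ``$d_{(R/J)/K}(\bar F)=0$ implies $\bar F\in (R/J)^p$'' is false for a general domain; it requires $R/J$ to be normal, so that the $p$-th root of $\bar F$ in $\mathrm{Frac}(R/J)$ (which exists because $K$ is perfect, and the kernel of $d$ on a finitely generated field extension of a perfect field is the subfield of $p$-th powers) is integral over, hence contained in, $R/J$. This does hold here --- $R/J$ is a complete intersection, hence $S_2$, and satisfies $R_3\supseteq R_1$, so it is normal by Serre's criterion --- but it must be said: primeness of $J$ alone does not suffice. Second, your collapse in that case yields only $\str(F)\le B$ with $B\le\Phi_d(h)$, and the strict inequality needed for the contradiction is waved away; to close it one should either verify that the chain of estimates in Section~5 has slack at least $1$ (true, but a check rather than a remark), or state the bound as $\mathcal{B}_{d-1}(3,h)+1$ as in \cite{AH1}. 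Finally, the suggested alternative of reducing to characteristic zero via zero-generic initial ideals does not work: that device is used in the paper only to bound Betti numbers of colon ideals, and passing to an initial ideal preserves neither strength nor the hypothesis on $\mathcal{D}F$.
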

\begin{proof}
By Proposition 2.6 of \cite{AH1}, if a function $\tilde{B}_{d-1}(\eta,\cdot)$ satisfies $PcB_{d-1}$, then $\Phi_d(h)=\mathcal{B}_{d-1}(3,h)+1$ satisfies the property in the statement. When $d=2$, we can take $\mathcal{B}_1(\eta,h)=h$, so $\Phi_d(h)=h+1$ would work. If $d \geq 3$, we see $\mathcal{B}_{d-1}(3,h)=\widetilde{d-1}\uparrow^{d^2-d-2}\widetilde{d-1}\uparrow\max\{2,h\}=d\uparrow^{d^2-d-2}d\uparrow\max\{2,h\}$ satisfies $PcB_{d-1}$ for $\eta=3$, so we can take $\Phi_d(h)=d\uparrow^{d^2-d-2}d\uparrow\max\{2,h\}+1$.   
\end{proof}

\section{Explicit Eisenbud-Goto bound}
In this section, our focus is on the Eisenbud-Goto conjecture:
\begin{conjecture}
Let $K$ be an algebraically closed field. If $P \subset (x_1,\ldots,x_N)^2$ is a homogeneous prime ideal, then
$$\reg(P) \leq e(R/P)-\het(P)+1.$$
\end{conjecture}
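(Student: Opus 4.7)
The plan is to attempt the conjecture by reducing from arbitrary dimension to the curve case, where the bound is the theorem of Gruson-Lazarsfeld-Peskine, and then patching across dimensions using Cohen-Macaulay reductions together with the explicit Stillman-type bounds developed in Sections 5 and 6. Set $h = \het(P)$, $e = e(R/P)$, and $d = \dim(R/P) = N - h$; I may assume $d \geq 2$ since the curve case $d=1$ is classical, and after a faithfully flat base change I may assume $K$ has characteristic zero so that generic hyperplane sections behave as expected.

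First I would recover the Cohen-Macaulay case by the original Eisenbud-Goto argument. Choose a generic linear system of parameters $\ell_1,\ldots,\ell_d$ for $R/P$ and pass to the Artinian reduction $A = R/(P + (\ell_1,\ldots,\ell_d))$. Since $\ell_1,\ldots,\ell_d$ is a regular sequence of linear forms on $R/P$, one has $\reg(R/P) = \reg(A)$ and $\dim_K A = e$; the hypothesis $P \subset (x_1,\ldots,x_N)^2$ prevents $A$ from collapsing in degree one, so a Hilbert-function argument yields $\reg(A) \leq e - 1$ and hence $\reg(P) \leq e - h + 1$. To push past the Cohen-Macaulay case I would induct on $d$ by cutting with a generic linear form $\ell$: a suitable minimal prime $Q$ of $(P,\ell)$ still has $\het(Q) = h$ and $e(R/Q) \leq e$, and one would need an inequality of the shape $\reg(R/P) \leq \reg(R/(P,\ell)) + c$ with a correction $c$ controlled in terms of the $R_\eta$-sequence bounds of Theorem A applied inside a small polynomial subring containing the generators of $P$.

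The hard part, and in fact a fatal obstruction to any proof of the statement as written, is that the conjecture is false. As recalled in the introduction, McCullough and Peeva constructed in \cite{MR3758150EGcounterexample} homogeneous primes $P$ for which $\reg(R/P)$ grows at least double-exponentially in $e(R/P)$, and by \cite{mccullough2024prime} no polynomial bound in $e(R/P)$ alone is possible. Consequently the induction above must break down at precisely the step that absorbs non-Cohen-Macaulay behaviour, because the correction term $c$ is forced to grow faster than any polynomial in $e$ and $h$. Any proof proposal honest about the stated inequality must therefore either impose an extra hypothesis at this step (Cohen-Macaulayness of $R/P$, smoothness of $\mathrm{Proj}(R/P)$, $\dim(R/P) \leq 2$, or a similar restriction known to force the linear bound), or else replace the linear right-hand side by the rapidly growing Knuth-arrow expression furnished by the small-subalgebra bounds of Section 5, which is precisely the weaker unconditional statement proved as Theorem 7.5.
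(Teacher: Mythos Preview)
Your assessment is correct, and in fact matches the paper's own treatment: the statement is recorded in the paper as a \emph{conjecture}, not as a theorem, and no proof is offered. As you note, and as the paper itself recalls in the introduction, McCullough and Peeva produced counterexamples, so the inequality $\reg(P) \leq e(R/P)-\het(P)+1$ is false in general and admits no proof. The paper's contribution in Section~7 is exactly the weaker unconditional statement you identify at the end of your proposal: Theorem~7.5 replaces the linear right-hand side by the Knuth-arrow expression $(eh)\uparrow^{e^2h^2-1}(h+3)$.

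One small remark on the sketch you gave before reaching this conclusion: the Cohen--Macaulay reduction you outline is indeed the original Eisenbud--Goto argument and is fine in that regime. But the inductive step via a generic hyperplane section is already problematic before any appeal to growth rates, because a minimal prime $Q$ of $(P,\ell)$ need not satisfy $\reg(R/P) \leq \reg(R/Q) + c$ for any controllable $c$; the failure of this comparison is precisely where the McCullough--Peeva pathology enters. The paper's route to the weaker bound sidesteps this entirely: it writes $P=(f_1,\ldots,f_h):f$ via Chardin's lemma with $\deg f_i = e$ and $\deg f \leq (e-1)(h-1)$, embeds $f_1,\ldots,f_h,f$ in a small polynomial subring of dimension $B$ given by the Stillman bound of Theorem~5.8, and then applies the Caviglia--Sbarra regularity bound $(2d)^{2^{B-2}}$ in $B$ variables to the colon description.
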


This is a conjecture on the upper bound of regularity of $R/P$ in terms of its multiplicity and height, where $P$ is a prime ideal in $R$. In this section we assume $h=\het(P)$ and $e=e(R/P)$ are known and give an explicit bound of $\reg(P)$ in terms of $h,e$ using Knuth arrows. 

When $K$ is algebraically closed, the following well-known result gives an upper bound of $h$ in terms of $e$: 
\begin{lemma}[\cite{hleqe}]\label{7.3}
Assume $K$ is algebraically closed and $P$ is nondegenerate, that is, $P_1=0$. Then $h \leq e-1$.    
\end{lemma}
Next, we derive a lemma which shows that after a general linear coordinate change, a nondegenerate prime $P$ can be written as a colon ideal of elements of bounded degree. In the following lemma, we will call $\min\{i \in \mathbb{N}:P_i\neq0\}$ the initial degree of $P$.
\begin{lemma}\label{7.2}
Assume $|K|=\infty$. After a general linear transformation, $P$ contains a regular sequence $f_1,\ldots,f_h$ of degree $a_i$ such that $in_{<_{lex}}(f_i)=x_i^{a_i}$ and $a_i \leq e$ for any $i$, and $a_1$ is the initial degree of $P$. As a consequence, we can find $f_1,\ldots,f_h$ of degree $e$ such that $P=(f_1,\ldots,f_h):f$, where $\deg f_i=e$, $\deg f\leq h(e-1)$. 
\end{lemma}
\begin{proof}
After a general linear transformation, we may assume $P$ is in general coordinates, and $in_{<_{lex}}(P)=gin_{<_{lex}}(P)$. In particular, $in_{<_{lex}}(P)$ is Borel-fixed. The initial degree and multiplicity are invariant under a general linear transformation, so the inequality we want to prove does not change. We prove the result for all $P$ such that $in_{<_{lex}}(P)$ is Borel-fixed by induction on $h=\het(P)$.

If $h=1$, then $P=(f)$ with $\deg(f)=e$ is the initial degree of $P$. In general coordinates we must have $in_{<_{lex}}f=x_1^e$, so we are done in this case.

Now assume $h\geq 2$ and assume the result holds for primes of height at most $h-1$. Let $P \subset R$ be a homogeneous prime of height $h$. Consider the weight vector $w=(1,0,\ldots,0)$, then two monomials $u,v$ satisfying $u<_w v$ must satisfy $u<_{lex} v$, that is, the total order $<_{lex}$ refines the relation $<_w$. Therefore, $in_{<_{lex}}(P)=in_{<_{lex}}(in_{<_w}(P))$. Set $Q=in_{<_w}(P)$. Let $a$ be the initial degree of $P$. Since $P$ is in general coordinates, there is $f_1 \in P$ such that $x_1^a=in_{<_{lex}}(f_1) \in in_{<_{lex}}(P)$. This implies $x_1^a \in in_{<_w}(P)$. So we can write
$$Q=(x_1^a)+(x_1^{a-1})Q_1+\ldots+x_1Q_{a-1}+Q_a,$$
where $Q_i \subset \bar{R}=K[x_2,\ldots,x_N]$. Moreover, $Q_a=P\cap \bar{R}$ is a nondegenerate homogeneous prime in $\bar{R}$. We have $e=e(P)=e(Q)$ since taking the initial ideal of a homogeneous ideal with respect to a weight does not change the multiplicity. We also have $Q \subset (x_1)+Q_a \subset \sqrt{Q}$, but $(x_1)+Q_a$ is prime, hence radical, so $(x_1)+Q_a=\sqrt{Q}$. Therefore
$$e(\bar{R}/Q_a)=e(R/((x_1)+Q_a))=e(R/\sqrt{Q})\leq e(R/Q)=e.$$
Also $x_1$ is a nonzero divisor on $R/Q_a$, so $\het(Q)=\het(Q_a)+1$. Also, $in_{<_{lex}}(Q_a)=in_{<_{lex}}(P)\cap \bar{R}$ is also Borel-fixed, so $Q_a$ satisfies the induction hypothesis. So by the induction hypothesis for $h-1$, we can pick a regular sequence $f_2,\ldots,f_h \in P$ of degree $a_i,i \geq 2$ such that $in_{<_{lex}}(f_i)=x_i^{a_i} \in in_{<_{lex}}(Q_a)\subset in_{<_{lex}}(P)$ and $a_i \leq e(\bar{R}/Q_a)\leq e$ for any $i$. So $f_1,\ldots,f_h$ is the desired sequence for $P$ in the first half of the statement. For the second half of the statement, $x_1^{e-a_1}f_1,\ldots,x_h^{e-a_h}f_h$ is the desired sequence for $P$, which is a regular sequence since passing to the initial terms they form a regular sequence. The existence of $f$ comes from Lemma \ref{colon}.
\end{proof}

For such $f_1,\ldots,f_h,f$ in Lemma \ref{7.2}, we have a short exact sequence
$$0 \to R/P \to R/(f_1,\ldots,f_h) \to R/(f_1,\ldots,f_h,f) \to 0.$$
If we can find a regular sequence $y_1,\ldots,y_B$ of homogeneous elements such that $f_1,\ldots,f_h,f \in S=K[y_1,\ldots,y_B]$, then we have another short exact sequence
$$0 \to S/Q \to S/(f_1,\ldots,f_h)S \to S/(f_1,\ldots,f_h,f)S \to 0$$
and $S \to R$ is faithfully flat. So the first short exact sequence is just the second short exact sequence base change from $S$ to $R$. In particular, $P=QR$ and $R/P=R \otimes_S S/Q$.

The subring $S$ may not be standard graded. In fact, $S$ is graded if we set $d_i=\deg y_i$, and the ideals $(f_1,\ldots,f_h,f)$ and $Q$ will be homogeneous with respect to this grading. We consider another flat extension $T=K[z_1,\ldots,z_B]$ of $S$; $S \to T$ with $y_i \to z_i^{d_i}$. We assume $S$ is standard graded, then $S \to T$ is a graded homomorphism. Thus an element is homogeneous in $S$ if and only if its image in $R$ is homogeneous, if and only if its image in $T$ is homogeneous, and the two images have the same degree. Therefore, $QT$ is a homogeneous ideal in $T$; if $P=QR$ is generated by a graded vector space with dimension vector $\delta$, then so is $QT$ in $T$. Let
$$F_\bullet \to S/Q$$
be a graded minimal $S$-free resolution of $S/Q$. Then $F_\bullet \otimes_S R$ is a graded $R$-free resolution of $R/P$, which is minimal; $F_\bullet \otimes_S T$ is a graded $T$-free resolution of $T/QT$ which is minimal. The entries of $F_\bullet \otimes_S R$ and $F_\bullet \otimes_S T$ have the same degrees. In particular, this tells us that
$\reg(R/P)=\reg(T/QT)$ and $\pd(R/P)=\pd(T/QT)$. Note that we have a third short exact sequence
$$0 \to T/QT \to T/(f_1,\ldots,f_h)T \to T/(f_1,\ldots,f_h,f)T \to 0.$$
Here we are abusing the notation; we use the fact that $f_i,f \in S$ and denote their images in $T$ under the ring homomorphism $S \to T$ still by $f_i,f$. This leads to an upper bound of $\reg(R/P)$ and $\pd(R/P)$. We need a lemma by Caviglia and Sbarra:
\begin{lemma}\label{CS1}
Let $I$ be a homogeneous $K[x_1,\ldots,x_n]$-ideal generated in degree less than or equal to $d$. Then
$$\reg(I) \leq (2d)^{2^{n-2}}.$$
\end{lemma}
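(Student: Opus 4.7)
The plan is to reduce to a monomial (in fact strongly stable) ideal, whose regularity is controlled by its generating degrees, and then to run a recursive hyperplane-section argument on the number of variables. This is essentially the strategy already deployed in this paper in the proof of Lemma \ref{boundcolon}; the claim here is the version without the extra presence of the form $f$ and without the regular sequence structure, so the argument should in fact be cleaner.

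First I would replace $I$ by $J := \mathrm{Gin}_0(I)$, the zero-generic initial ideal with respect to the degree reverse lexicographic order defined in Notations 2.14(3). By construction $J$ is strongly stable, generated in degrees $\leq d$, and $\reg(I) \leq \reg(J)$. The Eliahou--Kervaire resolution \cite{EK} then gives $\reg(J) = D(J)$, the largest degree of a minimal generator of $J$, so it suffices to bound $D(J)$. As in the proof of Lemma \ref{boundcolon}, for general linear forms $l_{i+1}, \ldots, l_n$ set $I_{\langle i \rangle}$ to be the image of $I$ in $K[x_1, \ldots, x_n]/(l_{i+1}, \ldots, l_n) \cong K[x_1, \ldots, x_i]$ and $J_{[i]} := J \cap K[x_1, \ldots, x_i]$; by \cite[Theorem 2.20]{CS2} one has $D(J_{[i]}) \leq \reg(I_{\langle i \rangle})$, so the problem reduces to an inductive bound on $\reg(I_{\langle i \rangle})$.

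Next I would set up the recursion, mimicking \cite[Theorem 2.4]{CS1} and formula \eqref{regrecur} in this paper. The base case $i = 2$ is elementary since an ideal in two variables generated in degree $\leq d$ has regularity at most $2d-1$, which we absorb into $B_2 := 2d$. For the inductive step, one estimates
\[
\reg(I_{\langle i \rangle}) \;\leq\; \max\bigl\{d,\, \reg(I_{\langle i-1 \rangle})\bigr\} \;+\; \lambda\!\left(\frac{K[x_1, \ldots, x_{i-1}]}{I_{\langle i-1 \rangle}}\right)\cdot\prod_{j=2}^{i-1} \reg(I_{\langle j \rangle}),
\]
where the length term is finite because we take enough general linear forms to reach dimension zero. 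Bounding this length crudely by $d^{i-1}$ and collapsing the product into the previous bound produces a doubling-type recursion $B_i \leq B_{i-1}^2$, which iterated $n-2$ times from the base $B_2 = 2d$ yields $B_n \leq (2d)^{2^{n-2}}$, giving the claim.

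The main obstacle will be the bookkeeping of constants so that the squaring happens cleanly: one must verify that the additive $d$ term, the length factor $\lambda$, and the leftover product can all be absorbed into a single squaring of the previous bound without introducing extra constants that spoil the clean exponent $2^{n-2}$. A secondary technical point is the positive-characteristic case, which the passage through $\mathrm{Gin}_0$ rather than the ordinary $\mathrm{gin}_{\mathrm{rlex}}$ is designed to handle, ensuring strong stability of $J$ regardless of $\mathrm{char}(K)$.
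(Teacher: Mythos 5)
First, a point of comparison: the paper does not prove this lemma at all --- it is quoted as a known result of Caviglia and Sbarra \cite{CS1}, and the only place the underlying technique appears in the paper is the proof of Lemma \ref{boundcolon}, which explicitly follows \cite[Theorem 2.4 and Corollary 2.6]{CS1}. Your sketch is a rendering of that same argument, so the overall strategy (pass to $\mathrm{Gin}_0(I)$, use Eliahou--Kervaire \cite{EK} and \cite[Theorem 2.20]{CS2} to reduce to bounding $\reg(I_{\langle i \rangle})$, then run a squaring recursion on the number of variables) is the right one. However, as written there are two genuine problems. The first is the assertion that $J=\mathrm{Gin}_0(I)$ is ``generated in degrees $\leq d$'': this is false in general, and if it were true the lemma would be trivial, since you correctly note $\reg(J)=D(J)$. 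Nothing later uses this claim, so it is a slip rather than a load-bearing error, but it should go.

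The second problem is substantive: in your displayed recursion the length term is $\lambda\bigl(K[x_1,\dots,x_{i-1}]/I_{\langle i-1 \rangle}\bigr)$, which is infinite as soon as $i-1$ exceeds $h:=\het(I)$, so the inequality is vacuous for most $i$. In \cite[Theorem 2.4]{CS1}, and in (\ref{regrecur}) of this paper, the length is taken once and for all at the Artinian level: the term is $\lambda(R_{[h]}/I_{\langle h \rangle})$, the recursion only runs for $i\geq h+1$, and for $i\leq h$ one uses the direct bound $\reg(I_{\langle i \rangle})\leq id-i+1$ coming from $m_{[i]}^{id-i+1}\subseteq I_{\langle i \rangle}$. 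To make this work you also need an ingredient your sketch omits: for a general $I$ generated in degrees $\leq d$ one must first produce a homogeneous regular sequence of $h$ forms of degree $\leq d$ inside $I$ (by prime avoidance among general combinations of the generators), so that $R_{[h]}/I_{\langle h \rangle}$ is a quotient of an Artinian complete intersection and $\lambda\leq d^h$; in Lemma \ref{boundcolon} this is handed to you by the hypotheses, but here it must be supplied. With these repairs the bookkeeping does close up to $B_i\leq B_{i-1}^2$ and $B_n\leq (2d)^{2^{n-2}}$ as you claim, modulo the small-$i$ and small-$h$ edge cases, exactly as in \cite{CS1}.
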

Note that if $h=1$ or $e=1$, this is a trivial case. So we may assume $h,e \geq 2$.
\begin{theorem}\label{7.5} 
Let $P$ be a prime in $R$ of height $h \geq 2$ with $\deg(R/P)=e \geq 2$. Then
\begin{align*}
\reg(R/P) \leq (eh)\uparrow^{e^2h^2-1}(h+3).
\end{align*}
If moreover $K$ is algebraically closed, then
\begin{align*}
\reg(R/P) \leq (e^2)\uparrow^{e^4-1}(e+2).
\end{align*}
\end{theorem}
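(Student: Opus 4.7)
The plan is to express $P$ as a colon ideal inside a polynomial subring of controlled embedding dimension, and then apply the Caviglia--Sbarra regularity bound in that subring. Since $K$ is algebraically closed and hence infinite, Lemma 7.2 directly writes $P=(f_1,\ldots,f_h):f$ in $R$, where $f_1,\ldots,f_h$ is a homogeneous regular sequence of degree $e$ and $\deg f\le (e-1)(h-1)$. The graded vector space $V\subset R$ spanned by $\{f_1,\ldots,f_h,f\}$ has dimension vector $\delta$ with $|\delta|\le h+1$, maximum coordinate $\sigma\le h+1$, and $\deg V\le d:=\max\{e,(e-1)(h-1)\}\le eh$.

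Next I would apply Theorem 5.8 to $V$ to obtain an $R_3$-sequence $y_1,\ldots,y_B$ of homogeneous forms of degree at most $d$ with
$$B\le (d+1)\uparrow^{d^2+d-2}(d+1)\uparrow\sigma,$$
and $V\subset S:=K[y_1,\ldots,y_B]$. Since $y_1,\ldots,y_B$ is a regular sequence, $S$ is a weighted-graded polynomial ring and $R$ is graded faithfully flat over $S$; setting $Q:=(f_1,\ldots,f_h):f\subset S$, we have $P=QR$. To replace the weighted grading by the standard one, I would compose with the graded faithfully flat extension $S\hookrightarrow T:=K[z_1,\ldots,z_B]$ given by $y_i\mapsto z_i^{\deg y_i}$; since this preserves the homogeneous degrees of the images of $f_1,\ldots,f_h,f$, a minimal graded free resolution of $S/Q$ base-changes to minimal resolutions over both $R$ and $T$, yielding $\reg(R/P)=\reg(T/QT)$.

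The regularity in $T$ is then controlled by the short exact sequence
$$0 \longrightarrow (T/QT)(-\deg f) \xrightarrow{\,\cdot f\,} T/(f_1,\ldots,f_h)T \longrightarrow T/(f_1,\ldots,f_h,f)T \longrightarrow 0.$$
Since $f_1,\ldots,f_h$ is a regular sequence of degree $e$, the middle term has regularity $h(e-1)$; and since $T$ has exactly $B$ variables and $(f_1,\ldots,f_h,f)T$ is generated in degrees at most $d\le eh$, Lemma 7.4 bounds the regularity of the third term by $(2eh)^{2^{B-2}}$. The usual long-exact-sequence estimate for regularity then yields $\reg(R/P)=\reg(T/QT)\le (2eh)^{2^{B-2}}+1$.

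Finally, substituting the bound on $B$ produces the nested Knuth-arrow chain
$$(2eh)\uparrow 2\uparrow (d+1)\uparrow^{d^2+d-2}(d+1)\uparrow(h+1)+1,$$
which the absorption estimate Lemma 5.4 collapses into a single tower of the claimed form $(eh)\uparrow^{e^2h^2-1}(h+3)$. The second stated inequality follows from Lemma 7.3: since $P\subset (x_1,\ldots,x_N)^2$ is nondegenerate, $h\le e-1$, so $eh\le e^2$, $h+3\le e+2$, and $e^2h^2-1\le e^4-1$. The main obstacle is this last step: matching the precise exponent $e^2h^2-1$ and top value $h+3$ is a matter of careful bookkeeping, accounting for every constant that appears in the chain and invoking the left-absorbing identity of Lemma 5.3 enough times, rather than any fresh conceptual input.
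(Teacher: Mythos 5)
Your proposal follows essentially the same route as the paper: Chardin's colon-ideal presentation $P=(f_1,\ldots,f_h):f$, containment of $f_1,\ldots,f_h,f$ in a small subalgebra via Theorem 5.8, passage to the standard-graded ring $T$, the short exact sequence with $\reg(T/(f_1,\ldots,f_h)T)=h(e-1)$ and the Caviglia--Sbarra bound for the third term, then the Knuth-arrow absorption, with the second inequality coming from $h<e$. The only divergence is bookkeeping: the paper takes $\sigma=h$, $d=(e-1)(h-1)$, and omits the extra $+1$ from the regularity estimate, which is what lets the absorption lemma land exactly on $(eh)\uparrow^{e^2h^2-1}(h+3)$, whereas your more conservative choices ($\sigma\le h+1$ and the trailing $+1$) would produce a marginally larger top entry such as $h+5$.
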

\begin{proof}
Choose $f_1,\ldots,f_h,f$ as in Lemma \ref{7.2}. $f_1,\ldots,f_h$ have degree $e$ and $f$ has degree $h(e-1)$. By Theorem \ref{5.9}, let $d=h(e-1)$, $\tilde{d}=d+1$, $\sigma=h$,
\begin{align*}
B=\tilde{d}\uparrow^{d^2+d-2}\tilde{d}\uparrow \sigma\\
=(h(e-1)+1)\uparrow^{h^2(e-1)^2+h(e-1)-2}(h(e-1)+1)\uparrow h.   
\end{align*}
Then $f_1,\ldots,f_h,f$ lie in $S=K[y_1,\ldots,y_B]$ for a suitable choice of regular sequence $y_1,\ldots,y_B$. According to the above discussion, we can form the rings $S,T$ and the $T$-ideal $(f_1,\ldots,f_h,f)T$. We have
$$\textup{pd}(R/P)=\textup{pd}(S/Q) \leq B.$$
Now
$$\reg(T/QT) \leq \max\{\reg(T/(f_1,\ldots,f_h)T),\reg(T/(f_1,\ldots,f_h,f)T)\}+1.$$
Since $f_1,\ldots,f_h$ is a regular sequence,
$$\reg(T/(f_1,\ldots,f_h)T)=h(e-1).$$
Also by Lemma \ref{CS1},
$$\reg(T/(f_1,\ldots,f_h,f)T) \leq (2h(e-1))^{2^{B-2}}.$$
Since $(2h(e-1))^{2^{B-2}} \geq h(e-1) = \reg(T/(f_1,\ldots,f_h)T)$, $(2h(e-1))^{2^{B-2}}$ serves as an upper bound for the two terms inside the maximum expression. So
$$\reg(R/P)=\reg(T/QT) \leq (2h(e-1))^{2^{B-2}}+1.$$
Note that $h(e-1)+1<eh$ and $h^2(e-1)^2+h(e-1)<h^2e^2$, so
$$B \leq (eh)\uparrow^{e^2h^2-2}(eh)\uparrow h.$$
Also we have $2eh \leq (eh)^2$ and $\sigma \to eh\uparrow2\uparrow\sigma$ is strictly increasing. Therefore,
\begin{align*}
\reg(R/P) \leq (2eh)\uparrow2\uparrow((eh)\uparrow^{e^2h^2-2}(eh)\uparrow h-2)+1 \\
\leq (eh)\uparrow2\uparrow((eh)\uparrow^{e^2h^2-2}(eh)\uparrow h-1)+1\\
\leq (eh)\uparrow2\uparrow(eh)\uparrow^{e^2h^2-2}(eh)\uparrow h\\
\leq (eh)\uparrow^{e^2h^2-1}(h+3).
\end{align*}
Now we assume $K$ is algebraically closed. By going modulo a linear form in $P_1$, we reduce the height by 1 while preserving $e(R/P)$ and $\reg(R/P)$. Iterating, we may assume $P$ is nondegenerate, and then by Lemma \ref{7.3} we have $h \le e-1$. If $h \geq 2$, the above inequality leads to
\begin{align*}
\reg(R/P) \leq (e^2)\uparrow^{e^4-1}(e+2).
\end{align*}
If $h=0$ in this case, then $\reg(R/P)=0$. If $h=1$ then $P$ is a principal prime ideal generated by a form of degree $e=\deg(R/P)$, so $\reg(R/P)=e-1$. So when $h=0$ or $h=1$, the above bound still holds, and we are done.
\end{proof}

\bibliographystyle{plain}
\bibliography{referencetrans}{}

\end{document}